\xpatchcmd{\proof}{\topsep6\p@\@plus6\p@\relax}{}{}{}
\newcommand{\lra}{\longrightarrow} 
\newcommand{\teststat}[1][\Di]{S_{{#1}, \alpha_{#1}}}
\newcommand{\test}{\phi_{\Di, \alpha_{\Di}}}
\newcommand{\maxtest}{\phi_{\cK, \alpha}}
\newcommand{\maxteststat}{S_{\cK, \alpha}}
\renewcommand{\subset}{\subseteq}
\newcommand{\IN}{\mathbb{N}}
\newcommand{\IR}{\mathbb{R}}
\newcommand{\lcb}{\left\lbrace} 
\newcommand{\rcb}{\right\rbrace} 
\newcommand{\lv}{\left\vert} 
\newcommand{\rv}{\right\vert} 
\newcommand{\lV}{\left\Vert} 
\newcommand{\rV}{\right\Vert} 
\newcommand{\lb}{\left(} 
\newcommand{\rb}{\right)} 
\newcommand*{\mc}[1]{\mathcal{#1}}
\newcommand{\dif}{\text{d}}
\DeclareMathOperator*{\argmin}{arg\,min}
\declaretheoremstyle[
    spaceabove=10pt, 
    spacebelow=6pt, 
    headfont=\color{\colre}\normalfont\bfseries,
    notefont=\mdseries\bfseries, 
    notebraces={(}{)}, 
    bodyfont=\normalfont\itshape,
    postheadspace=.3em,
    headpunct={.}]{restyle}
\declaretheoremstyle[
    spaceabove=8pt, 
    spacebelow=8pt, 
    headfont=\color{\colrem}\normalfont\bfseries,
    notefont=\mdseries\bfseries, 
    notebraces={(}{)}, 
    bodyfont=\normalfont\itshape,
    postheadspace=.3em,
    qed=\smaller$\color{\colrem}\square$, 
    headpunct={.}]{remstyle}
\declaretheoremstyle[
    spaceabove=8pt, 
    spacebelow=8pt, 
    headfont=\color{\colil}\normalfont\bfseries,
    notefont=\mdseries\bfseries, 
    notebraces={(}{)}, 
    bodyfont=\normalfont\itshape,
    postheadspace=.3em,
    qed=\smaller$\color{\colil}\square$, 
    headpunct={.}]{ilstyle}
\declaretheoremstyle[
    spaceabove=8pt, 
    spacebelow=8pt, 
    headfont=\color{\colas}\normalfont\bfseries,
    notefont=\mdseries\bfseries, 
    notebraces={(}{)}, 
    bodyfont=\normalfont\itshape,
    postheadspace=.3em,
    qed=\smaller$\color{\colil}\square$, 
    headpunct={.}]{destyle}
\declaretheorem[name=Theorem, style=restyle, numberwithin=section]{theorem}
\declaretheorem[name=Lemma, style=restyle, numberlike=theorem]{lemma}
\declaretheorem[name=Proposition, style=restyle, numberlike=theorem]{proposition}
\declaretheorem[name=Corollary, style=restyle, numberlike=theorem]{corollary}
\declaretheorem[name=Remark, style=remstyle, numberlike=theorem]{remark}
\declaretheorem[name=Assumption, style=destyle, numbered=no]{assumption}
\declaretheorem[name=Illustration, style=ilstyle, numberlike=theorem]{illustration}
\newcommand{\mylabel}[2]{#2\def\@currentlabel{#2}\label{#1}}
\def\@fnsymbol#1{\ensuremath{\ifcase#1\or * \or 1 \or 2 \or 3 \or  *\or  \star \or 4\or  , \or 
g\or h\or i\else\@ctrerr\fi}}%
\author{{\sc Sandra Schluttenhofer}\;\thanks{Institut f\"ur Angewandte
    Mathematik, M$\Lambda$THEM$\Lambda$TIKON, Im Neuenheimer Feld 205,
  D-69120 Heidelberg, Germany, e-mail:
  \url{{schluttenhofer|johannes}@math.uni-heidelberg.de}} \and {\sc Jan Johannes}$\;^*$}
\date{Ruprecht-Karls-Universität Heidelberg} 
\title{Adaptive minimax testing in inverse Gaussian sequence space models} 
\begin{document}
\maketitle
\begin{abstract}
  In the inverse Gaussian sequence space model with additional noisy
  observations of the operator, we derive nonasymptotic minimax radii
  of testing for ellipsoid-type alternatives simultaneously for both
  the signal detection problem (testing against zero) and the
  goodness-of-fit testing problem (testing against a prescribed
  sequence) without any regularity assumption on the null
  hypothesis. The radii are the maximum of two terms, each of which
  only depends on one of the noise levels. Interestingly, the term
  involving the noise level of the operator explicitly depends on the
  null hypothesis and vanishes in the signal detection case.

  The minimax radii are established by first showing a lower bound for
  arbitrary null hypotheses and noise levels. For the upper bound we
  consider two testing procedures, a direct test based on estimating
  the energy in the image space and an indirect test. Under mild
  assumptions, we prove that the testing radius of the indirect test
  achieves the lower bound, which shows the minimax optimality of the
  radius and the test. We highlight the assumptions under which the
  direct test also performs optimally.  Furthermore, we apply a
  classical Bonferroni method for making both the indirect and the
  direct test adaptive with respect to the regularity of the
  alternative. The radii of the adaptive tests are deteriorated by an
  additional log-factor, which we show to be unavoidable. The results
  are illustrated considering Sobolev spaces and mildly or severely
  ill-posed inverse problems.
\end{abstract}
{\footnotesize
\begin{tabbing} 
\noindent \emph{Keywords:} \=nonparametric test theory, nonasymptotic separation radius, minimax
theory, inverse problem,\\
\> unknown operator,  aggregation, adaptation,  gooodness-of-fit,
signal detection\\[.2ex] 
\noindent\emph{AMS 2000 subject classifications:} primary 62G10;
secondary 62C20, 62G20. 
\end{tabbing}}%
%
%
%
%
%
\section{Introduction}
\paragraph{The statistical model.} We consider an inverse Gaussian
sequence space model with heteroscedastic errors and unknown operator
\begin{equation}\label{model}
  \yOb[j]  \sim \nVg[{\Ev[j]\So[j], \nlIm[j]}] \quad\text{and}
  \quad \xOb[j]  \sim \nVg[{\Ev[j],  \nlOp[j]}], \quad j \in \IN,
\end{equation}
where $\EvS:=\Nsuite[j]{\Ev[j]} \in \lp^\infty$ is an unknown bounded
sequence, $\SoS:=\Nsuite[j]{\So[j]} \in \lp^2$ is an unknown square
summable sequence,
$\nlImS:=\Nsuite[j]{\nlIm[j]} \in \pRz^\Nz$ and
$\nlOpS:=\Nsuite[j]{\nlOp[j]} \in \pRz^\Nz$ are known sequences of
positive real numbers, called noise levels. The sequences
$\yObS:=\Nsuite[j]{\yOb[j]}$ and $\xObS:=\Nsuite[j]{\xOb[j]}$ are
assumed to be independent with independent Gaussian components, we
denote their respective distributions by
$\yObS\sim \FuVg[\nlImS]{\EvS\SoS}$ and
$\xObS\sim \FuVg[\nlOpS]{\EvS}$ and their joint distribution by
$(\yObS,\xObS) \sim \FuVg[\nlImS,\nlOpS]{\SoS,\EvS}$.  For a given
$\oSoS \in \lp^2$ we want to test the
null hypothesis $\lcb \SoS = \oSoS\rcb$ against the alternative
$\lcb \SoS \ne \oSoS \rcb$ based on the observation $(\yObS,\xObS)$,
where $\EvS \in \lp^\infty$ is a nuisance parameter and optimality is measured in a minimax sense.

Model \eqref{model} is an idealised formulation of a statistical
inverse problem with unknown operator, where a signal $\SoS$
transformed by a multiplication with the unknown sequence $\EvS$ is
observed. In the particular case $\EvS=\Nsuite{1}$, the model is called
direct, otherwise inverse, and ill-posed if additionally $\EvS$ tends
to zero. For inverse problems with fully known operator (corresponding
to known $\EvS$), we refer to \cite{JohnstoneSilverman1990},
\cite{MairRuymgaart1996}, \cite{MathePereverzev2001},
\cite{CavalierTsybakov2002},
\cite{CavalierGolubevPicardTsybakov2002}, and the references therein.
\cite{IngsterSapatinasSuslina2012a} describe typical examples, where
the inverse Gaussian sequence space model with known $\EvS$ arises
naturally, one of which is deconvolution (\cite{Ermakov1990a};
\cite{Fan1991}; \cite{StefanskiCarroll1990}). In \eqref{model} the
sequence $\EvS$ is unknown, but an additional noisy observation of it
is available.  \cite{CavalierHengartner2005},
\cite{IngsterSapatinasSuslina2012}, \cite{JohannesSchwarz2013} or
\cite{MarteauSapatinas2017}, for instance, provide a detailed
discussion and motivation of this particular statistical inverse
problem with unknown operator. An example is density deconvolution
with unknown error distribution (cf. \cite{ComteLacour2011},
\cite{Efromovich1997} or \cite{Neumann1997}).  Oracle or minimax
optimal nonparametric estimation and adaptation in the framework of
inverse problems has been extensively studied in the literature (see
\cite{EfromovichKoltchinskii2001},
\cite{CavalierGolubevLepskiTsybakov2003}, \cite{Cavalier2008} and
\cite{HoffmannReiss2008}, to name but a few).
\paragraph{The testing task.}Coming back to the nonparametric testing
task, one usually introduces an \textit{energy condition}
$\SoS - \oSoS \in \lp[\rho]^2:= \set{a_{\mbullet} \in \lp^2 :
  \Vnormlp{a_{\mbullet}} \geq \rho}$ for a separation radius
$\rho \in \IR_+$ in order to make the null hypothesis and the
alternative distinguishable.  Additionally, \textit{regularity
  conditions} are imposed on the unknown sequences $\SoS$ and $\EvS$
by introducing nonparametric classes of parameters
$\cSo \subset \lp^2$ and $\cEv \subset \lp^\infty$. We define these
classes below such that they are flexible enough to capture typical
smoothness and ill-posedness assumptions. Summarising we consider the
testing task
\begin{align}
\label{testing:e1}
  H_0: \SoS = \oSoS, \EvS \in \cEv \qquad
  \text{ against } \qquad H_1^\rho: \SoS - \oSoS \in \lp[\rho]^2 \cap \cSo, \EvS\in\cEv.
\end{align}
Roughly speaking, in minimax testing one searches for the smallest $\rho$ such that
\eqref{testing:e1} is still testable with small error probabilities. In the literature there exist several
definitions of rates and radii of testing in an asymptotic and
nonasymptotic sense.  The classical definition of an asymptotic rate
of testing for nonparametric alternatives was essentially introduced
in the series of papers  \cite{Ingster1993},
\cite{Ingster1993a} and \cite{Ingster1993b}. For fixed noise levels, there exist two alternative definitions of a nonasymptotic radius of testing. For prescribed error
probabilities $\alpha,\beta\in(0,1)$, \cite{Baraud2002}, \cite{LaurentLoubesMarteau2012} and
\cite{MarteauSapatinas2017}, amongst others, define a nonasymptotic radius of testing as the smallest separation radius
$\rho$ such that there is an $\alpha$-test with maximal type II error
probability over the $\rho$-separated alternative smaller than $\beta$. \cite{MarteauSapatinas2015}, for
example, provide a unified treatment of asymptotic minimax rates and
nonasymptotic minimax radii of testing.  Following
e.g. \cite{CollierCommingesTsybakov2017}, 
in this paper we measure the
accuracy of a test by its maximal
risk defined as the sum of the maximal type I and II error probability
over the null hypothesis and the $\rho$-separated alternative,
respectively,
\begin{multline*}
	\RiT{\teF}{\cSo,\cEv}{\oSoS,\rho}
	:= \sup\set{ \FuVg[\nlImS,\nlOpS]{\oSoS, \EvS}(\teF = 1):\EvS \in
		\cEv}\\
	+ \sup\set{\FuVg[\nlImS,\nlOpS]{\SoS, \EvS}(\teF = 0):
		\SoS - \oSoS \in \lp[\rho]^2 \cap \cSo, \EvS\in\cEv}
\end{multline*}
and compare it to the minimax risk for the testing task
\eqref{testing:e1}
\begin{align*}
	\mRiT{\cSo,\cEv}{\oSoS,\rho}  
	:= \inf_{\teF} \RiT{\teF}{\cSo,\cEv}{\oSoS,\rho},
\end{align*}
where the infimum is taken over all possible tests, i.e. over all measurable functions $\varphi: \IR^\IN \times \IR^\IN \lra \lcb 0, 1 \rcb$. A separation
radius
$\dRai[\nlImS,\nlOpS] := \dRai[\nlImS,\nlOpS](\cSo, \cEv, \oSoS)$ is
called minimax radius of testing, if for all $\alpha \in (0,1)$ there
exist constants $\uacst{\alpha}$, $\oacst{\alpha}$ $\in \pRz$
with 
\begin{enumerate}
\item[(i)] for all $A \in [\oacst{\alpha}, \infty):$
  $\mRiT{\cSo,\cEv}{\oSoS,A\dRai[\nlImS,\nlOpS]} \leq \alpha$; \hfill
  (upper bound)
\item[(ii)] for all $A \in [0, \uacst{\alpha}]:$
  $\mRiT{\cSo,\cEv}{\oSoS,A\dRai[\nlImS,\nlOpS]} \geq 1 - \alpha.$
  \hfill (lower bound)
\end{enumerate}
Note that this definition of the minimax radius of testing is entirely
nonasymptotic. However, in our illustrations we compare our findings
to existing asymptotic results by considering the homoscedastic
case, i.e., constant noise levels $\nlImS=\Nsuite{\nlIm}$ and
$\nlOpS=\Nsuite{\nlOp}$ with $\nlIm,\nlOp\in\pRz$, and the behaviour of the radii of
testing for $\nlIm$ and $\nlOp$ tending to zero.
\paragraph{Related literature.}
Minimax testing for the direct homoscedastic version of the model
\eqref{model}, i.e. $\EvS = \Nsuite{1}$, $\nlOpS = \Nsuite{0}$ and
$\nlImS = \Nsuite{\nlIm}$, has been studied extensively in the
literature for various classes of alternatives. Asymptotic results and
a list of references can be found in the book by
\cite{IngsterSuslina2012}. Let us briefly mention some further
references. \cite{LepskiSpokoiny1999} derive asymptotic minimax rates for
Besov-type alternatives. Following this result, \cite{Spokoiny1996}
considers adaptive testing strategies, showing that asymptotic
adaptation comes with the unavoidable cost of a
$\log$-factor.  Introducing the nonasymptotic framework for minimax
testing, \cite{Baraud2002} derives matching upper and lower bounds in
the direct model for ellipsoid-type
alternatives. \cite{CollierCommingesTsybakov2017} provide similar
results for sparse alternatives, using tests based on minimax-optimal
estimators of the squared norm of  the parameter of
interest. \cite{CarpentierVerzelen2019} derive minimax radii of
testing for composite (null) hypotheses, which explicitly depend on
the complexity of the null hypothesis. Both phenomena -- an estimator
of the squared norm yields a minimax optimal test and minimax radii
depend on the null hypothesis -- reappear in our results.

In the inverse problem setting with fully known operator and
homoscedastic errors, i.e. $\nlOpS = \Nsuite{0}$ and
$\nlImS = \Nsuite{\nlIm}$, asymptotic rates over ellipsoids $\cSo$ are
derived in \cite{IngsterSapatinasSuslina2012}. Simultaneously,
\cite{LaurentLoubesMarteau2012} establish the corresponding
nonasymptotic radii.  Moreover, \cite{LaurentLoubesMarteau2011}
compare direct and indirect testing approaches, i.e. based on the
estimation of $\lV \Ev \lb \SoS - \oSoS \rb \rV_{\ell^2}^2$
respectively of $\lV \SoS - \oSoS \rV_{\ell^2}^2$, concluding that the
direct approach is preferable (under certain assumptions), since it
achieves the minimax radius without requiring an inversion.

Let us now return to the testing task \eqref{testing:e1} in the model
with unknown operator. In this situation there is a natural
distinction between the cases $\oSoS = \nSoS:=\Nsuite[j]{0}$
(\textit{signal detection}) and $\oSoS \ne \nSoS$
(\textit{goodness-of-fit}) on which we comment further below.
\cite{MarteauSapatinas2017} additionally impose on the null hypothesis
an abstract smoothness condition $\oSoS \in \cSo$ and thereby obtain
radii depending on $\cSo$ rather than on the given null hypothesis
$\oSoS$. Let us emphasise that we instead seek radii for a given
$\oSoS$ for the testing problem \eqref{testing:e1}, which typically
are much smaller than the uniform ones. Restricting themselves to the
goodness-of-fit ($\oSoS \ne \nSoS$) testing task in the homoscedastic
setting, \cite{MarteauSapatinas2017} derive upper and lower bounds for
the uniform radii, featuring a logarithmic gap. Treating the signal
detection task and the goodness-of-fit testing task separately,
\cite{Kroll2019} establishes matching upper and lower bounds for the
minimax radii of testing uniformly over null hypotheses in $\cSo$.
\paragraph{Minimax results.} In this paper we derive nonasymptotic
minimax radii of testing in the inverse Gaussian sequence space model
\eqref{model} for ellipsoid-type alternatives $\cSo$ simultaneously
for both the signal detection ($\oSoS = \nSoS$) and the
goodness-of-fit testing problem ($\oSoS \neq \nSoS$) without any
regularity assumption on the null hypothesis $\oSoS$. For known
operators ($\nlOpS = \nSoS$) there is typically no distinction between
the goodness-of-fit and signal detection task. Minimax results for the
goodness-of-fit testing task can be obtained from the signal detection
task by simply shifting the observations, i.e. considering the
sequence $\yObS - \EvS \oSoS$ instead of $\yObS$. Obviously, this is
no longer possible if $\EvS$ is unknown, which motivates their
separate treatment in \cite{MarteauSapatinas2017} and
\cite{Kroll2019}. In contrast, the reparametrisation $(\tyObS,\xObS)$
with $\tyObS := \yObS- \oSoS\xObS$ of the model \eqref{model} allows
us to deal with the signal detection problem and the goodness-of-fit
problem simultaneously.  The components of
$\tyObS = \Nsuite[j]{\tyOb[j]}$ are still independent and follow a
normal distribution
$\tyOb[j]= \yOb[j] - \oSo[j]\xOb[j] \sim \nVg[{\Ev[j](\So[j]-\oSo[j]),
  \sonlIm[j]}]$ with noise level
$\sonlIm[j]:= \snlIm[j]+ \oSo[j]^2\snlOp[j]$. The reparametrisation
already indicates that $\oSoS \nlOpS$ is the \textit{effective} noise
level instead of the \textit{original} noise level $\nlOpS$. In the
following, the minimax radii will first be derived in terms of the
\textit{reparametrised} noise level $\onlImS$ and then expressed as the maximum
of two terms, each of which only depends on one of the noise levels
$\nlImS$ and $\oSoS \nlOpS$. We shall stress that thereby the
dependence of the minimax radius on the null hypothesis $\oSoS$ is
explicit. In particular, this shows that the $\nlOpS$-term in the radius
vanishes in the signal detection task ($\oSoS = \nSoS$). Furthermore,
for $\nlOpS = \nSoS$ we recover the minimax radii for known operator,
which consequently do not depend on the null hypothesis $\oSoS$. Using
the reparametrised observation $(\tyObS, \xObS)$, we propose an
\textit{indirect} test based on the estimation of a squared weighted
$\lp^2$-norm of $\oSoS - \SoS$. More precisely, we use an estimator
that mimics an inversion of $\EvS$ by using the class $\cEv$ and aims
to estimate the quadratic functional
$\qFPr{ \oSoS - \SoS} := \sum_{j=1}^{\Di} \lb \oSo[j] - \So[j]
\rb^2$. If $\Di$ is chosen appropriately, the test attains the minimax
radius given by a classical trade-off between the variance of the
quadratic functional and a bias term.  To avoid the inversion, we
investigate a \textit{direct} testing procedure inspired by
\cite{LaurentLoubesMarteau2011}, that is based on the estimation of
the squared $\lp^2$-norm of $\EvS \lb \oSoS - \SoS \rb$. We show its
minimax optimality for the corresponding direct testing task, i.e. for
testing the null hypothesis $\lcb \EvS \oSoS = \EvS \SoS \rcb$ against
the alternative $\lcb \EvS \oSoS \neq \EvS \SoS \rcb$. In contrast to
inverse problems with known operator, we show that the direct approach is not always preferable if the operator is unknown, but characterise
situations in which it is. In particular in signal detection the direct test
achieves the minimax radius under very mild assumptions. However, its
advantage over the indirect test is that it only implicitly depends on the
knowledge of the model's ill-posedness characterised by the class
$\cEv$ via an optimal choice of the dimension parameter $\Di$.
\paragraph{Adaptation.} For both testing procedures the optimal choice
of the dimension parameter $\Di$ relies on the knowledge of
characteristics of the classes $\cSo$ and $\cEv$. A classical
procedure to circumvent this problem is to aggregate several tests for
various dimension parameters $\Di$ into a \textit{maximum}-test, which
rejects the null hypothesis as soon as one of the tests does. We apply
this aggregation to both testing procedures and derive the radii of
testing of their corresponding $\max$-tests. Thereby, the indirect
$\max$-test is adaptive (i.e. assumption-free) with respect to the
smoothness of $\SoS$ characterised by a family of
$\cSo$-alternatives. Comparing its radius to the minimax radius, there
is a deterioration. Heuristically, the adaptive radius is obtained by
magnifying the error level in the minimax radius by an
\textit{adaptive factor} (cp. \cite{Spokoiny1996}). Depending on the
complexity of the families of $\cSo$-alternatives, we show that
adaptive factors of $\log \log$- or even of $\log \log \log$-order are
possible. Moreover, we derive a lower bound, which shows that these
\textit{adaptive factors} are unavoidable. The indirect $\max$-test is
still only adaptive with respect to the smoothness of $\SoS$, but
explicitly depends on the model's ill-posedness characterised by
$\cEv$. In contrast, the direct $\max$-test is adaptive with respect
to both smoothness and ill-posedness. Again its radius features an
adaptive factor. Interestingly, also adaptation with respect to the
ill-posedness of $\EvS$ only results in a $\log \log$-loss, which we
show to be unavoidable.

\paragraph{Outline of the paper.} The paper is organised as
follows. In \cref{minimax:radii:of:testing} the minimax radii of
testing are derived by first establishing a lower bound
(\cref{subsec:lower}) and then a matching upper bound
(\cref{subsec:upper}) via an indirect testing procedure. In
\cref{subsec:upper:direct} we investigate a direct testing procedure.
\cref{sec:adaptation} is devoted to adaptive testing.  Technical
results and their proofs are deferred to the \cref{appendix}.


%
%
%
%
\section{Minimax Radii of Testing}
\label{minimax:radii:of:testing}
\paragraph{Notation and definitions.} For sequences
$a_{\mbullet}= \Nsuite[j]{a_j}$ and $b_{\mbullet}=\Nsuite[j]{b_j}$ in
$\Rz^\Nz$ operations and inequalities are defined component-wise,
i.e. $a_{\mbullet}^2 = \Nsuite[j]{a_j^2}$,
$a_{\mbullet}b_{\mbullet} = \Nsuite[j]{a_jb_j}$,
$a_{\mbullet}\vee b_{\mbullet}=\Nsuite[j]{a_j\vee
  b_j:=\max(a_j,b_j)}$,
$a_{\mbullet}\wedge b_{\mbullet}=\Nsuite[j]{a_j\wedge
  b_j:=\min(a_j,b_j)}$ or $a_{\mbullet}\leq xb_{\mbullet}$ with
$x\in\pRz$, if $a_j\leq x b_j$ for all $j\in\Nz$. If $a_{\mbullet}$
attains a minimum on a subset $\cK\subset\Nz$, we write
$\min_{\cK}(a_{\mbullet}):=\min\Nset[j\in\cK]{a_j}$ and
$\argmin\nolimits_{\cK}(a_{\mbullet}):=\min\{n\in\cK:a_n\leq
a_j,\;\forall j\in\cK\}$, where we suppress the index in the case
$\cK=\Nz$. For $\Di\in\Nz$ we denote
$\nset{\Di}:=[1,\Di]\cap \Nz$. Further, we define monotonically
nondecreasing sequences
$\qFPrS{a_{\mbullet}}=\Nsuite[\Di]{\qFPr{a_{\mbullet}}}$ and
$\mFPrS{a_{\mbullet}}=\Nsuite[\Di]{\mFPr{a_{\mbullet}}}$ in
$\Rz^{\Nz}$ with $\qFPr{a_{\mbullet}} := \sum_{j\in\nset{\Di}}a_j^2$
and $\mFPr{a_{\mbullet}} := \max_{\nset{\Di}}(a_{\mbullet})$ for
$\Di\in\Nz$ and set
$\rqFPrS{a_{\mbullet}}:=(\qFPrS{a_{\mbullet}})^{1/2}\in\pRz^{\Nz}$.\linebreak
For $\SoS\in\lp^2$ define the nonincreasing sequence of bias terms
$\sbFSoS=\Nsuite[\Di]{\sbFSo}:=\Vnormlp{\SoS}^2-\qFPrSoS\in\pRz^{\Nz}$,
i.e., $\sbFSo=\Vnormlp{\SoS}^2-\qFPrSo\geq0$ for $\Di\in\Nz$, where
$\lim_{\Di\to\infty}\sbFSo=0$ for all $\SoS\in\lp^2$.  With this
notation, for $\wSoS,\wEvS \in\pRz^\Nz$ and $\rSo,\rEv \in \pRz$ with
$\rEv\geq1$, we introduce nonparametric classes
\begin{multline*}
  \rwcSo:= \set{\SoS \in \lp^2: \sbFSoS \leq \rSo\swSoS}\subset\lp^2\quad\text{and}\\
  \rwcEv:=\set{\EvS\in\lp^\infty: \EvS^2\leq\rEv\wEvS^2\wedge
    \wEvS^2\leq \rEv\EvS^2}\subset\lp^\infty
\end{multline*}
for the parameters $\SoS$ and $\EvS$, respectively. Here and
subsequently, we impose the following minimal regularity conditions.
\begin{assumption}\label{min:ass}The sequences $\wSoS,\wEvS \in\pRz^\Nz$ are  
strictly  positive and monotonically nonincreasing
 with $\VnormInf{\wSoS},\VnormInf{\wEvS}\leq1$.
\end{assumption}
Let us emphasise that under the minimal regularity assumption $\EvS>\nSoS$ holds for all
$\EvS\in\rwcEv$ and hence the parameter $\SoS$ is
identifiable in the model \eqref{model}.  For a sequence
$x_{\mbullet} \in \IR^\IN$ let us define the following minimum and minimiser,
respectively,
\begin{align}\label{indirect:radius}
  \sdRaCi[\wSoS,\wEvS]{x_{\mbullet}}:=
  \min(\rqFPrS{x_{\mbullet}^2/\wEvS^2}\vee\swSoS) \quad
  \text{and} \quad  \dDiCi[\wSoS,\wEvS]{x_{\mbullet}}:=
  \argmin(\rqFPrS{x_{\mbullet}^2/\wEvS^2}\vee\swSoS).
\end{align}
Throughout this section the sequences $\wSoS$ and $\wEvS$ are
arbitrary but fixed. In particular, the optimal testing procedures
explicitly exploit the prior knowledge of $\wSoS$ and $\wEvS$, i.e. the fact that the unknown parameters satisfy $\SoS - \oSoS \in\rwcSo$ and
$\EvS\in\rwcEv$ for some $\rSo,\rEv \in \pRz$. Given subsets
$\CwSo,\CwEv\subset\pRz^\Nz$ of strictly positive, monotonically
nonincreasing respectively bounded sequence, we
discuss adaptive testing strategies when $\wSoS\in\CwSo$ and
$\wEvS\in\CwEv$  in \cref{sec:adaptation} .
\subsection{Lower Bound}
\label{subsec:lower}
In this section we first prove a lower bound for the minimax radius of testing in terms of the
reparametrised noise level $ \sonlImS=\nlImS^2+\oSoS^2\nlOpS^2$.  We then
infer a lower bound in terms of the original and effective
noise level, $\nlImS$ and $\oSoS\nlOpS$, respectively. Consider
$\smRai$ as in \eqref{indirect:radius}, replacing $x_{\mbullet}$ by
$\onlImS$, which represents the lower bound proved in the next theorem,
and set $\mDii:=\dDiCi[\wSoS,\wEvS]{\onlImS}$.
\begin{theorem}\label{lowerbound} Let $\eta\in(0,1]$ satisfy
  \begin{equation}\label{assump1}
    \eta \leq
    \frac{\rqFPr[\mDii]{\sonlImS/\wEvS^2}\wedge\swSo[\mDii]}{\smRai}
    =\frac{\rqFPr[\mDii]{\sonlImS/\wEvS^2}\wedge\swSo[\mDii]}
    {\rqFPr[\mDii]{\sonlImS/\wEvS^2}\vee\swSo[\mDii]} .
  \end{equation}
  For $\alpha \in (0,1)$ define  $\uacst[2]{\alpha} := \eta
  \lb \rSo \wedge \sqrt{2\log(1+2\alpha^2)} \rb$. Then 
  \begin{align}
    \label{ig.ul.e4}
    \forall A \in [0, \uacst{\alpha} ]:
    \mRiT[\nlImS,\nlOpS]{\rwcSo, \rwcEv}{\oSoS, A
    \dRaCi[\wSoS,\wEvS]{\onlImS}}
    \geq 1- \alpha,
  \end{align}
  i.e.  $\dRaCi[\wSoS,\wEvS]{\onlImS}$ 
  is a lower bound for the {minimax radius of testing}.
\end{theorem}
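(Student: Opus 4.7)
The plan is to establish the lower bound through the classical Bayesian two-point reduction followed by a chi-square divergence estimate. By Le Cam's lemma and Cauchy-Schwarz one has
\[
\mathcal{R}^\ast \;\geq\; 1 - \|\mathbb{P}-\mathbb{Q}\|_{\mathrm{TV}} \;\geq\; 1 - \tfrac{1}{2}\sqrt{\chi^2(\mathbb{P},\mathbb{Q})}
\]
for any null-admissible law $\mathbb{Q}$ and any Bayesian alternative $\mathbb{P}$ supported on admissible parameters, so the task reduces to constructing $\mathbb{P},\mathbb{Q}$ with $\chi^2(\mathbb{P},\mathbb{Q}) \leq 4\alpha^2$ whenever $A \leq \underline{A}_\alpha$.

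I would take $\nu_\bullet = \mathfrak{e}_\bullet \in \mathcal{E}_{\rho e}$ under the null (valid since $\rho_e \geq 1$) and put a symmetric Rademacher prior $\tau_j := \theta_j - \theta^\circ_j = c_j \varepsilon_j$ supported on $\{1,\dots,\underline{D}\}$ (zero otherwise), with iid uniform signs $\varepsilon_j \in \{\pm 1\}$ and positive deterministic coefficients $c_j$ to be chosen; in the goodness-of-fit case $\theta^\circ_\bullet \neq 0$ I would additionally randomise $\nu_j = \mathfrak{e}_j + k_j \varepsilon_j$ along the least-favourable direction $k_j \propto \theta^\circ_j \delta_j^2/\bar\sigma_j^2$, so that the $(y_j,x_j)$-mean separation $\mu_{\mathbb{P}}^{(j)}-\mu_{\mathbb{Q}}^{(j)}$ projected through the inverse covariance $\Sigma_j^{-1}$ produces the quadratic form $\mathfrak{e}_j^2 c_j^2/\bar\sigma_j^2$. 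Coordinate independence and the sign-symmetry of the prior then yield
\[
\chi^2(\mathbb{P},\mathbb{Q}) + 1 \;=\; \prod_{j=1}^{\underline{D}} \cosh\!\bigl(\mathfrak{e}_j^2 c_j^2/\bar\sigma_j^2\bigr) \;\leq\; \exp\Bigl(\tfrac{1}{2}\sum_{j=1}^{\underline{D}} \mathfrak{e}_j^4 c_j^4/\bar\sigma_j^4\Bigr).
\]

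Choosing $c_j^2$ proportional to $\bar\sigma^2_j/\mathfrak{e}^2_j$ on the support, the three requirements---separation $\|\tau\|^2 \geq A^2 r^\ast(\bar\sigma^2_\bullet)^2$, ellipsoid regularity $\sum_{j>D,\,j\leq\underline{D}} c_j^2 \leq \rho_s \mathfrak{w}_D^2$ for every $D<\underline{D}$, and $\chi^2(\mathbb{P},\mathbb{Q}) \leq 4\alpha^2$---collapse to scalar inequalities in the single proportionality constant. Using the definition of $\underline{D}$ and $r^\ast(\bar\sigma^2_\bullet)$ together with the slack encoded in assumption~\eqref{assump1} by $\eta$, these three conditions are simultaneously satisfiable precisely when $A^2 \leq \eta(\rho_s \wedge \sqrt{2\log(1+2\alpha^2)}) = \underline{A}^2_\alpha$: the first branch comes from the bias constraint and the second from the chi-square bound.

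The main technical obstacle is the $\nu$-randomisation in the goodness-of-fit case: without it, fixing $\nu = \mathfrak{e}_\bullet$ and computing chi-square on $y_\bullet$ directly produces the weaker bound $\prod_j \cosh(\mathfrak{e}_j^2 c_j^2/\sigma^2_j)$, which corresponds to the easier known-operator problem and yields only $r^\ast(\sigma^2_\bullet) \leq r^\ast(\bar\sigma^2_\bullet)$. The inflation from $\sigma^2_\bullet$ to the larger $\bar\sigma^2_\bullet = \sigma^2_\bullet + (\theta^\circ_\bullet)^2 \delta^2_\bullet$ reflects the semiparametric-efficiency phenomenon that the per-coordinate information for $\theta_j$ drops from $\mathfrak{e}_j^2/\sigma^2_j$ to $\mathfrak{e}_j^2/\bar\sigma^2_j$ when the nuisance $\nu$ is unknown, and the specific scale $k_j \propto \theta^\circ_j\delta_j^2/\bar\sigma_j^2$ is exactly what is needed to absorb, at the Bayesian two-hypothesis level, the informative content of $x_\bullet$ about $\nu$.
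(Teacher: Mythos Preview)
Your route and the paper's diverge at the decisive step. The paper does \emph{not} randomise the operator: it fixes $\nu_\bullet=\mathfrak e_\bullet$ under both hypotheses, passes to the reparametrised observation $\tilde Y_j=Y_j-\theta^\circ_j X_j$, and then asserts that $\tilde Y_\bullet$ is a sufficient statistic for $\vartheta_\bullet=\theta_\bullet-\theta^\circ_\bullet$, so that the joint $\chi^2$-divergence equals that of the $\tilde Y_\bullet$-marginal. Since $\tilde Y_j$ has marginal variance $\bar\sigma_j^2$, this would deliver the product $\prod_j\cosh(\mathfrak e_j^2c_j^2/\bar\sigma_j^2)$ directly, with no prior on $\nu_\bullet$ needed.

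Your objection to the fixed-operator route is well founded, and in fact points to a genuine gap in the paper's argument. With $\nu_\bullet=\mathfrak e_\bullet$ held fixed, $X_\bullet$ is ancillary and the likelihood factorises through $Y_\bullet$, not $\tilde Y_\bullet$; the sufficient statistic for $\vartheta_j$ is $Y_j$, and the joint $\chi^2$ is exactly $\prod_j\cosh(\mathfrak e_j^2c_j^2/\sigma_j^2)$, strictly larger than the $\tilde Y$-marginal quantity whenever $\theta^\circ_j\ne0$. (A one-coordinate check: $\theta^\circ=\sigma=\delta=\mathfrak e=1$ gives joint $\chi^2=\cosh(c^2)-1$ versus marginal $\cosh(c^2/2)-1$.) So the paper's sufficiency step does not hold as written, and your instinct that one must exploit the uncertainty in $\nu$ to lift the noise from $\sigma_j^2$ to $\bar\sigma_j^2$ is the correct semiparametric intuition --- the efficient information for $\theta_j$ in the presence of unknown $\nu_j$ is indeed $\mathfrak e_j^2/\bar\sigma_j^2$.

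That said, your own sketch leaves real work undone. First, the least-favourable nuisance direction has $k_j$ equal to the regression coefficient of the $\theta$-score on the $\nu$-score times $c_j$, i.e.\ $k_j=-\mathfrak e_j\theta^\circ_j\delta_j^2 c_j/\bar\sigma_j^2$, not merely $k_j\propto\theta^\circ_j\delta_j^2/\bar\sigma_j^2$; you then need $|k_j|\le c\,\mathfrak e_j$ (some $c<1$ depending on $\rho_e$) to keep $\nu_\bullet\in\mathcal E_{\mathfrak e_\bullet}^{\rho_e}$, and this constraint interacts with the choice of $c_j$. Second, the cross term $k_jc_j\varepsilon_j^2=k_jc_j$ in $(\mathfrak e_j+k_j\varepsilon_j)(\theta^\circ_j+c_j\varepsilon_j)$ is a deterministic shift that breaks exact Rademacher symmetry and must be shown to be of lower order. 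Third, the claim that the resulting $\chi^2$ collapses to $\prod_j\cosh(\mathfrak e_j^2c_j^2/\bar\sigma_j^2)$ is so far only a Fisher-information heuristic; the finite-displacement $\chi^2$ computation for the two-point Gaussian mixture with correlated $(\theta,\nu)$-perturbation still has to be carried out.
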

\begin{proof}[Proof of \cref{lowerbound}]
  The proof is based on a classical reduction scheme. For a fixed $\Di \in \IN$, let us first
  introduce deviations from the null
  $\tSoS \in \rwcSo \cap \lp[\uacst{\alpha}\mRai]^2$ with $\tSo[j] =0$
  for $j > \Di$ (to be specified below). For each
  $\tau \in \set{\pm 1}^{\Di}$ we define $\tSoS^\tau $ by
  $\tSo[j]^\tau:= \tau_j \tSo[j]$, $j\in\nset{\Di}$, where by
  construction $\oSoS + \tSoS^\tau$ belongs to the alternative. We
  consider the uniform mixture measure over the vertices of a
  hypercube
  $\FuVg{1} := \frac{1}{2^{k}} \sum_{\tau \in \lcb \pm 1\rcb^{k}}
  \FuVg[\nlImS,\nlOpS]{\oSoS + \tSoS^\tau, \wEvS}$ and
  $\FuVg{0} := \FuVg[\nlImS,\nlOpS]{\oSoS, \wEvS}$, supported on the
  alternative and the null hypothesis, respectively. Considering the
  reparametrised observation $(\tyObS:=\yObS-\oSoS\xObS,\xObS)$ let
  $\tFuVg{0}$ and $\tFuVg{1}$ denote its joint distribution given
  $\FuVg{0}$ and $\FuVg{1}$, respectively. Obviously, their total
  variation distance satisfies
  $\text{TV}(\FuVg{1}, \FuVg{0})=\text{TV}(\tFuVg{1}, \tFuVg{0})$.
  Applying a classical reduction argument we therefore obtain
  \begin{multline}\label{reduction}
    \mRiT[\nlImS,\nlOpS]{\rwcSo, \rwcEv}{\oSoS, A \mRai} 
    \geq \inf_{\teF}\{ \FuVg{0}(\varphi=1) + \FuVg{1}(\varphi=0) \}
    = 1 - \text{TV}(\FuVg{1}, \FuVg{0})\\
    = 1-\text{TV}(\tFuVg{1}, \tFuVg{0}) 
    \geq 1 - \sqrt{\frac{\chi^2(\tFuVg{1},\tFuVg{0})}{2}},
  \end{multline}
  where the last inequality for the $\chi^2$-divergence follows
  e.g. from Lemma 2.5. and inequality (2.7) in \cite{Tsybakov2009}. Keep in mind
  that the coordinates of $(\tyObS,\xObS)$ are independent and
  normally distributed. More precisely, if
  $(\yObS,\xObS)\sim \FuVg[\nlImS,\nlOpS]{\oSoS + \tSoS^\tau,\wEvS}$
  then the $j$-th of the coordinates of $\tyObS$ is
  normally distributed with mean $\wEv[j]\tSo[j]^\tau$ and variance
  $\sonlIm[j]$, i.e.  $\tyObS\sim\FuVg[\onlImS]{\wEvS\tSoS^\tau}$.
  Since $\tyObS$ is a sufficient statistic for $\tSoS$, the
  conditional distribution of $\xObS$ given $\tyObS$ does not depend
  on $\tSoS$. Hence, the $\chi^2$-divergence between $\tFuVg{1}$ and
  $\tFuVg{0}$ equals the $\chi^2$-divergence of the mixture over the
  marginal distribution $\FuVg[\onlImS]{\wEvS\tSoS^\tau}$ of $\tyObS$.
  From \cref{lemma:adaptive_lower_bound} in the appendix it follows
  that
  \begin{align*}
    \chi^2(\tFuVg{1}, \tFuVg{0}  ) =
    \chi^2\big(\tfrac{1}{2^{k}} \sum_{\tau \in \lcb \pm 1 \rcb^{k}}
    \FuVg[\onlImS]{\wEvS\tSoS^\tau}, \FuVg[\onlImS]{\nSoS} \big)  \leq
    \exp \big( \tfrac{1}{2} \sum_{j\in\nset{\Di}}
    \tfrac{\wEv[j]^4\tSo[j]^4}{\onlIm[j]^4}
    \big) - 1 = \exp \big( \tfrac{1}{2} \qFPr{\tfrac{\wEvS^2\tSoS^2}{\sonlImS}}\big)- 1.
  \end{align*}
  For each $\alpha\in(0,1)$ the last bound together with
  \eqref{reduction}, implies the assertion \eqref{ig.ul.e4}, if for
  some $\tSoS\in\lp^2$, $\Di\in\Nz$ and $\uacst{\alpha}\in\pRz$
  both
  \begin{inparaenum}[i]\renewcommand{\theenumi}{\dgrau\rm(\alph{enumi})}
  \item\label{lowerbound.a}
    $\tSoS \in \rwcSo \cap
    \lp[\uacst{\alpha}\mRai]^2$   and \item\label{lowerbound.b}
    $ \qFPr{\wEvS^2\tSoS^2/\sonlImS} \leq 2
    \log(1+2\alpha^2)$
  \end{inparaenum}
  hold.  It remains to define these quantities: Let
  $\Di:= \mDii:=\dDiCi[\wSoS\wEvS]{\onlImS}$ and consider
  $\tSoS=\Nsuite{\tSo[j]}$ with $\tSo[j]=0$ for $j>\mDii$, and 
  \begin{equation*}
    \tSo[j] :=  \frac{\sqrt{\zeta \eta \smRai}}{\rqFPr[\mDii]{\sonlImS/\wEvS^2}}\;
    \frac{\sonlIm[j]}{\wEv[j]^2}\quad \text{for }j\in\nset{\mDii}
    \quad \text{and} \quad \zeta := \rSo \wedge \sqrt{2\log(1+2\alpha^2)}.
  \end{equation*}
  Since
  $\Vnormlp{\tSoS}^2 = \qFPr[\mDii]{\tSoS} = \zeta \eta \smRai =
  \uacst[2]{\alpha}\smRai$ with $\uacst[2]{\alpha} := \zeta\eta$ the
  parameter $\tSoS$ is separated by $\uacst{\alpha}\mRai$ from the
  null. Moreover, $\tSoS$ lies in $\rwcSo$. Indeed, keeping
  \eqref{assump1} and the definition of $\zeta$ in mind for all
  $l\in\nset{\mDii}$ we have
  $\sbF[l]{\tSoS} \leq \qFPr[\mDii]{\tSoS}=
  \uacst[2]{\alpha}\smRai\leq \zeta \swSo[\mDii] \leq \rSo \swSo[l]$,
  while $\sbF[l]{\tSoS} = 0 \leq \swSo[l]$ for each $l >
  \mDii$. Therefore, $\tSoS$ satisfies \ref{lowerbound.a}. On the other
  hand, exploiting again \eqref{assump1} and the definition of $\zeta$
  we have
  $\qFPr[\mDii]{\wEvS^2\tSoS^2/\sonlImS} =\zeta^2{( \eta
    \smRai)^2}/{(\rqFPr[\mDii]{\sonlImS/\wEvS^2})^2} \leq \zeta^2 \leq
  2\log(1+2\alpha^2)$, and thus also \ref{lowerbound.b} holds, which
  completes the proof.
\end{proof}
Note that the lower bound in \eqref{ig.ul.e4} involves the value
$\eta$ satisfying \eqref{assump1}, which depends on the joint
behaviour of the sequences $\wEvS$ and $\wSoS$ and essentially
guarantees an optimal balance of the bias and the variance term in the
dimension $\mDii$.  Next, consider $\symRaCi$ and $\sxmRaCi$ as in
\eqref{indirect:radius}, replacing $x_{\mbullet}$ by the original and
the effective noise level, $\nlImS$ and $\oSoS\nlOpS$, respectively.
The elementary inequality
$\rqFPrS{\snlImS/\wEvS^2}\vee\rqFPrS{\oSoS^2\snlOpS/\wEvS^2}\leq
\rqFPrS{\sonlImS/\wEvS^2}$ directly implies
 $\ymRaCi \vee \xmRaCi \leq \mRai$. 
 Therefore, the next
corollary is an immediate consequence of \cref{lowerbound}, and we
omit its proof.
\begin{corollary}\label{lowerbound2}
  Under the assumptions of \cref{lowerbound},
\begin{equation*}
    \forall A \in [0, \underline{A}_\alpha]: 
    \mRiT[\nlImS,\nlOpS]{\rwcSo, \rwcEv}{\oSoS, A [\ymRaCi \vee
      \xmRaCi]}
    \geq 1- \alpha,
  \end{equation*}
  i.e.  $\ymRaCi \vee \xmRaCi$ is a lower bound for the minimax radius
  of testing.
\end{corollary}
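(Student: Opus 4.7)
The plan is to combine the elementary pointwise inequality $\ymRaCi \vee \xmRaCi \leq \dRaCi[\wSoS,\wEvS]{\onlImS}$, which is already stated in the paragraph preceding the corollary, with the monotonicity of the minimax risk in the separation radius, and then to invoke \cref{lowerbound} at the separation $A\dRaCi[\wSoS,\wEvS]{\onlImS}$. Two short steps suffice.

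First I would verify the pointwise inequality. Since $\sonlIm[j] = \snlIm[j] + \oSo[j]^2\snlOp[j]$ and both summands are nonnegative, applying $(a+b)^2 \geq a^2 + b^2$ with $a = \snlIm[j]/\wEv[j]^2$ and $b = \oSo[j]^2\snlOp[j]/\wEv[j]^2$ yields
\begin{equation*}
  \frac{\sonlIm[j]^2}{\wEv[j]^4} \geq
  \frac{\snlIm[j]^2}{\wEv[j]^4} \vee
  \frac{\oSo[j]^4\snlOp[j]^2}{\wEv[j]^4}.
\end{equation*}
Summing over $j \in \nset{\Di}$ and taking square roots gives $\rqFPr{\sonlImS/\wEvS^2} \geq \rqFPr{\snlImS/\wEvS^2} \vee \rqFPr{\oSoS^2\snlOpS/\wEvS^2}$ for every $\Di \in \Nz$. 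Taking componentwise $\vee \swSoS$ and then minimising over $\Di$ preserves the ordering (both $\vee$ and $\min$ being monotone), so $\smRai \geq \symRaCi \vee \sxmRaCi$ and hence $\dRaCi[\wSoS,\wEvS]{\onlImS} \geq \ymRaCi \vee \xmRaCi$ after extracting the square root.

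Second I would invoke monotonicity of the minimax risk: for $\rho_1 \leq \rho_2$ one has $\lp[\rho_1]^2 \supseteq \lp[\rho_2]^2$, so the supremum of the type II error over the $\rho$-separated alternative is nonincreasing in $\rho$, and consequently $\rho \mapsto \mRiT[\nlImS,\nlOpS]{\rwcSo,\rwcEv}{\oSoS, \rho}$ is nonincreasing. Combined with the inequality from the first step, for every $A \in [0, \uacst{\alpha}]$,
\begin{equation*}
\mRiT[\nlImS,\nlOpS]{\rwcSo,\rwcEv}{\oSoS, A[\ymRaCi \vee \xmRaCi]}
\geq \mRiT[\nlImS,\nlOpS]{\rwcSo,\rwcEv}{\oSoS, A\dRaCi[\wSoS,\wEvS]{\onlImS}}
\geq 1 - \alpha,
\end{equation*}
where the last inequality is exactly \cref{lowerbound}. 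There is no real obstacle in this proof; the whole argument is bookkeeping around a direct application of the preceding theorem, which is precisely why the authors choose to omit it. The only point worth double-checking is that squared and unsquared radii are interchanged correctly, which is immediate since $t \mapsto \sqrt{t}$ is monotone on $\pRz$.
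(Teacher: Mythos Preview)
Your proposal is correct and follows exactly the route the paper indicates: establish $\ymRaCi \vee \xmRaCi \leq \dRaCi[\wSoS,\wEvS]{\onlImS}$ from the elementary inequality on the variance sequences, then combine the monotonicity of $\rho \mapsto \mRiT[\nlImS,\nlOpS]{\rwcSo,\rwcEv}{\oSoS,\rho}$ with \cref{lowerbound}. The paper omits the proof for precisely this reason; your write-up simply spells out the two bookkeeping steps that the phrase ``immediate consequence'' compresses.
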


%
%
%
%
\subsection{Indirect testing procedure}\label{subsec:upper}
In this section we derive an upper bound for the minimax radius of
testing based on the estimation of the energy of the parameter of
interest $\SoS - \oSoS$. Precisely, for
$\sonlImS=\snlImS+\oSoS^2\snlOpS\in\pRz^\Nz$ consider a sequence
$\hqFS=\Nsuite[\Di]{\hqF}$, where
$\hqF:=\sum_{j\in\nset{\Di}}\wEvDi[j]^{-2}((\yOb[j]-\oSo[j]\xOb[j])^2-\sonlIm[j])$
is an unbiased estimator of the quadratic functional
$\qFPr{\tfrac{\EvS}{\wEvS}(\SoS-\oSoS)}=\sum_{j\in\nset{\Di}}
\tfrac{\Ev[j]^2}{\wEv[j]^2}(\So[j]-\oSo[j])^2$, which differs from
$\qFPr{\SoS-\oSoS}$ only by a factor $\rEv$ for all
$\EvS \in \rwcEv$ and all $\Di \in \IN$.  Our evaluation of the
performance of the test under both the null hypothesis and the alternative
relies on bounds for quantiles of (non-)central
$\chi^2$-distributions, which we present in \cref{re:qchi} in the
\cref{appendix}. Its proof is based on a result given in
\cite{Birge2001} (Lemma 8.1), which is a generalisation of Lemma 1 of
\cite{LaurentMassart2000} and can also be found with slightly
different notation in \cite{LaurentLoubesMarteau2012} (Lemma 2).    
\begin{proposition}\label{m:re:ub}For $u\in(0,1)$ set
  $\lcst{u}:=\sqrt{|\log u|}$.  Let $\alpha,\beta\in(0,1)$.  For each
  $\Di\in\Nz$ it holds
  \begin{equation}\label{m:re:ub:e1}
    \sup\set{\FuVg[\nlImS,\nlOpS]{\oSoS,\EvS}\big(\hqF
      >2\lcst{\alpha}
      \rqFPr{\sonlImS/\wEvS^2}+2\lcst[2]{\alpha}\mFPr{\sonlImS/\wEvS^2}\big),
      \EvS\in\rwcEv}
    \leq\alpha.
  \end{equation}
  Let $\mDii:=\ymDiCi\wedge\xmDiCi$ as in \eqref{indirect:radius} and
  $\cst{\alpha,\beta}:=
  5(\lcst{\alpha}+\lcst[2]{\alpha}+\lcst{\beta}+5\lcst[2]{\beta})$,
  then for each $\SoS-\oSoS\in\rwcSo\cap\lp[\rho]^2$ with
  $ \rho^2\geq (\rSo+\rEv\,\cst{\alpha,\beta})[\symRaCi\vee\sxmRaCi]$
  it holds
  \begin{equation}\label{m:re:ub:e2}
    \sup\set{\FuVg[\nlImS,\nlOpS]{\SoS,\EvS}\big(\hqF[{\mDii}]
      \leq2\lcst{\alpha}\rqFPr[{\mDii}]{\sonlImS/\wEvS^2}+
      2\lcst[2]{\alpha}\mFPr[{\mDii}]{\sonlImS/\wEvS^2}\big),\EvS\in\rwcEv}
    \leq\beta.
  \end{equation}
\end{proposition}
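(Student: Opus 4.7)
Our plan is to follow the reparametrisation introduced in the proof of \cref{lowerbound}: setting $\tyOb[j]:=\yOb[j]-\oSo[j]\xOb[j]$, the variables $\tyOb[j]$ are independent with $\tyOb[j]\sim\nVg[{\Ev[j](\So[j]-\oSo[j]),\sonlIm[j]}]$. Writing $\sigma_j^2:=\sonlIm[j]/\wEv[j]^2$, $W_j:=\tyOb[j]/\sqrt{\sonlIm[j]}$ and $\mu_j:=\Ev[j](\So[j]-\oSo[j])/\sqrt{\sonlIm[j]}$, one has $W_j\sim\nVg[{\mu_j,1}]$ independently and
\begin{equation*}
  \hqF=\sum_{j\in\nset{\Di}}\sigma_j^2(W_j^2-1)
\end{equation*}
is a centred weighted non-central $\chi^2$ statistic. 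Both bounds then reduce to the quantile estimates of \cref{re:qchi} applied to this quadratic form, with $\bigl(\sum_{j\in\nset{\Di}}\sigma_j^4\bigr)^{1/2}=\rqFPr{\sonlImS/\wEvS^2}$ and $\max_{j\in\nset{\Di}}\sigma_j^2=\mFPr{\sonlImS/\wEvS^2}$ being precisely the quantities appearing in the threshold of \eqref{m:re:ub:e1}.

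For the type I error we observe that under $\FuVg[\nlImS,\nlOpS]{\oSoS,\EvS}$ the non-centralities vanish ($\mu_j=0$), so the null law of $\hqF$ is a weighted central $\chi^2$ which does not depend on $\EvS$; the supremum over $\rwcEv$ in \eqref{m:re:ub:e1} is therefore trivial, and the upper-tail deviation from \cref{re:qchi} with parameter $\lcst[2]{\alpha}=|\log\alpha|$ yields the stated bound at once.

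The heart of the argument is the type II bound. For every $\EvS\in\rwcEv$ and every $\SoS-\oSoS\in\rwcSo\cap\lp[\rho]^2$, the expectation at $\Di=\mDii$ satisfies
\begin{equation*}
  \mathbb{E}[\hqF[{\mDii}]]
  =\qFPr[{\mDii}]{\tfrac{\EvS}{\wEvS}(\SoS-\oSoS)}
  \geq\rEv^{-1}\qFPr[{\mDii}]{\SoS-\oSoS}
  \geq\rEv^{-1}\bigl(\rho^2-\rSo\,\swSo[{\mDii}]\bigr),
\end{equation*}
using $\wEv[j]^2\leq\rEv\,\Ev[j]^2$ on $\rwcEv$ for the first inequality, and the bias bound $\sbFSo[{\mDii}]\leq\rSo\,\swSo[{\mDii}]$ on $\rwcSo$ together with $\Vnormlp{\SoS-\oSoS}^2\geq\rho^2$ for the second. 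Since $\mDii$ equals either $\ymDiCi$ or $\xmDiCi$, the balancing characterisations at these two indices directly imply $\swSo[{\mDii}]\leq\symRaCi\vee\sxmRaCi$, while Minkowski applied to $\sonlImS=\snlImS+\oSoS^2\snlOpS$ combined with monotonicity of partial sums in $\Di$ gives
\begin{equation*}
  \rqFPr[{\mDii}]{\sonlImS/\wEvS^2}
  \leq\rqFPr[{\ymDiCi}]{\snlImS/\wEvS^2}+\rqFPr[{\xmDiCi}]{\oSoS^2\snlOpS/\wEvS^2}
  \leq\symRaCi+\sxmRaCi
  \leq 2(\symRaCi\vee\sxmRaCi),
\end{equation*}
and the same bound for $\mFPr[{\mDii}]{\sonlImS/\wEvS^2}$; in particular the type I threshold at index $\mDii$ is $O(\symRaCi\vee\sxmRaCi)$. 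Combining the lower-tail deviation of \cref{re:qchi} with parameter $\lcst[2]{\beta}$ in the non-central case and the hypothesis $\rho^2\geq(\rSo+\rEv\,\cst{\alpha,\beta})(\symRaCi\vee\sxmRaCi)$, all terms in the failure probability can be made smaller than $\beta$, which proves \eqref{m:re:ub:e2}.

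The only delicate point is the bookkeeping of constants in the non-central lower deviation, whose standard form involves $\bigl(\sum_j\sigma_j^4(1+2\mu_j^2)\bigr)^{1/2}$: the cross contribution $\sum_j\sigma_j^4\mu_j^2$ must be absorbed, either into $\max_j\sigma_j^2\cdot\mathbb{E}[\hqF[{\mDii}]]$ or via a Young-type inequality into half of the expectation plus a $\max_j\sigma_j^2$-remainder, so that it can be traded against the gain from $\mathbb{E}[\hqF[{\mDii}]]$. This balancing — and the fact that the non-central lower tail is the term that propagates worst through the decomposition — is exactly what produces the explicit form $\cst{\alpha,\beta}=5(\lcst{\alpha}+\lcst[2]{\alpha}+\lcst{\beta}+5\lcst[2]{\beta})$, the factor $5$ in front of $\lcst[2]{\beta}$ being the only nontrivial numerical artefact of the computation.
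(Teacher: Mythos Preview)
Your proposal is correct and follows essentially the same approach as the paper: reparametrise via $\tyOb[j]=\yOb[j]-\oSo[j]\xOb[j]$, recognise $\hqF$ as a centred weighted non-central $\chi^2$, apply the upper- and lower-tail quantile bounds of \cref{re:qchi}, and then combine the class constraints on $\rwcEv$ and $\rwcSo$ with the balancing at $\mDii$ to verify that the threshold lies below the $(1-\beta)$-quantile. The only cosmetic difference is that where you bound $\swSo[{\mDii}]$ and $\rqFPr[{\mDii}]{\sonlImS/\wEvS^2}$ separately via Minkowski plus monotonicity, the paper invokes \cref{re:argmin} to obtain the clean identity $\symRaCi\vee\sxmRaCi=\rqFPr[{\mDii}]{\snlImS/\wEvS^2}\vee\rqFPr[{\mDii}]{\snlOpS\oSoS^2/\wEvS^2}\vee\swSo[{\mDii}]$ at the common minimiser $\mDii=\ymDiCi\wedge\xmDiCi$; the resulting inequalities and constants are the same.
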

\begin{proof}[Proof of \cref{m:re:ub}]
  We intend to apply \cref{re:qchi} and use the notation introduced there. If
  $(\yObS,\xObS)\sim\FuVg[\nlImS,\nlOpS]{\SoS,\EvS}$, then for each
  $\Di \in \IN$,
  $Q_{\Di}:=\hqF+\qFPr{\onlImS/\wEvS}\sim\FuVgQ[\tnlImS]{\mu_{\mbullet},\Di}$
  with $\stnlImS:=\sonlImS/\wEvS^2$ and
  $\mu_{\mbullet}:=\EvS(\SoS-\oSoS)/\wEvS$.  Under the null
  hypothesis, i.e.,
  $(\yObS,\xObS)\sim\FuVg[\nlImS,\nlOpS]{\oSoS,\EvS}$, we have
  $Q_{\Di}\sim\FuVgQ[\tnlImS]{\nSoS,\Di}$ and with \eqref{qchi:e1} in
  \cref{re:qchi} it follows
  $\quVgQ[\tnlImS]{\nSoS,\Di}{\alpha}\leq \qFPr{\tnlImS}+
  2\lcst{\alpha}\rqFPr{\stnlImS}+2\lcst[2]{\alpha}\mFPr{\stnlImS}$,
  which implies \eqref{m:re:ub:e1}.  Under the alternative, i.e.,
  $(\yObS,\xObS)\sim\FuVg[\nlImS,\nlOpS]{\SoS,\EvS}$ with
  $\EvS\in\rwcEv$, $\SoS-\oSoS\in \rwcSo\cap\lp[\rho]^2$ and
  $ \rho^2\geq (\rSo+\rEv\cst{\alpha,\beta})[\symRaCi\vee\sxmRaCi]$, we obtain
  \begin{multline}\label{m:re:ub:p2}
    \Vnormlp{\SoS-\oSoS}^2 \geq \rSo\wSoDi[\mDii]^2 + \rEv
    [\rqFPr[\mDii]{\snlImS/\wEvS^2}\vee\rqFPr[\mDii]{\snlOpS\oSoS^2/\wEvS^2}]
    \cst{\alpha,\beta}\\
    \geq \rSo\wSoDi[\mDii]^2 + \rEv
    \frac{5}{2}\big(\lcst{\alpha}\rqFPr[\mDii]{\stnlImS}+\lcst[2]{\alpha}
    \mFPr[\mDii]{\stnlImS}+\rqFPr[\mDii]{\stnlImS}(\lcst{\beta}+5\lcst[2]{\beta})\big)
  \end{multline}
  using
  $\symRaCi\vee\sxmRaCi =
  \rqFPr[\mDii]{\snlImS/\wEvS^2}\vee\rqFPr[\mDii]{\snlOpS\oSoS^2/\wEvS^2}
  \vee\wSo[\mDii]^2$ due to \cref{re:argmin} and
  $2[\rqFPr[\mDii]{\snlImS/\wEvS^2}\vee\rqFPr[\mDii]{\snlOpS\oSoS^2/\wEvS^2}]
  \geq \rqFPr[\mDii]{\stnlImS}\geq\mFPr[\mDii]{\stnlImS}$. Moreover,
  for each $\Di\in\Nz$ and $\EvS\in\rwcEv$ we have
  $\rEv\qFPr{\mu_{\mbullet}}\geq\qFPr{\SoS-\oSoS}=
  \Vnormlp{\SoS-\oSoS}^2-\sbF{\SoS-\oSoS}$, which in turn for each
  $\SoS-\oSoS\in\rwcSo$ implies
  $\rEv\qFPr{\mu_{\mbullet}}\geq\Vnormlp{\SoS-\oSoS}^2-\rSo\wSoDi[\Di]^2$. This
  bound together with \eqref{m:re:ub:p2} implies
  $\tfrac{4}{5}\qFPr[\mDii]{\mu_{\mbullet}}\geq
  2\lcst{\alpha}\rqFPr[\mDii]{\stnlImS}+2\lcst[2]{\alpha}
  \mFPr[\mDii]{\stnlImS}+\rqFPr[\mDii]{\stnlImS}
  2(\lcst{\beta}+5\lcst[2]{\beta})$. Rearranging the last inequality,
  \eqref{qchi:e2} in \cref{re:qchi} implies
  \begin{equation*}
    2\lcst{\alpha}\rqFPr[\mDii]{\stnlImS}+2\lcst[2]{\alpha}\mFPr[\mDii]{\stnlImS} +
    \qFPr[\mDii]{\tnlImS} \leq \qFPr[\mDii]{\tnlImS} +
    \tfrac{4}{5}\qFPr[\mDii]{\mu_{\mbullet}} - \rqFPr[\mDii]
    {\stnlImS}2(\lcst{\beta}+5\lcst[2]{\beta})
    \leq\quVgQ[\tnlImS]{\mu_{\mbullet},\mDii}{1-\beta}
  \end{equation*} 
  and thus \eqref{m:re:ub:e2}, which completes the proof. 
\end{proof}
\paragraph{Definition.} For $\alpha\in(0,1)$ and $\Di\in\Nz$ we define
the test statistic and the test
\begin{equation}\label{de:it}
  \teSi[\Di,]{\alpha}:=\hqF-2\lcst{\alpha}
  \rqFPr{\sonlImS/\wEvS^2}-2\lcst[2]{\alpha}\mFPr{\sonlImS/\wEvS^2}
  \quad\text{ and }\quad
  \teFi[\Di,]{\alpha}:=\Ind{\set{\teSi[\Di,]{\alpha}>0}}.
\end{equation}  
Exploiting \eqref{m:re:ub:e1}, the test $\teFi[\Di,]{\alpha/2}$ defined in
\eqref{de:it} is a level-$\alpha/2$-test for any
$\Di\in\Nz$. Moreover,
\begin{equation}\label{de:it:op}
\teFi[]{\alpha/2}:=\teFi{\alpha/2}\quad\text{with
}\mDii:=\ymDiCi\wedge\xmDiCi\text{ as in \eqref{indirect:radius}}
\end{equation}  
is a $(1-\alpha/2)$-powerful test over
$\oacst{\alpha}[\ymRaCi\vee\xmRaCi]$-separated alternatives due to
\eqref{m:re:ub:e2} with $\beta=\alpha/2$ and
$\oacst[2]{\alpha}:=\rSo+\rEv(10\lcst{\alpha/2}+30\lcst[2]{\alpha/2})$. Hence,
$\RiT[\nlIm,\nlOp]{\teFi[]{\alpha/2}}{\wrcSo,\wrcEv}{\oSoS,\acst{}[\ymRaCi\vee\xmRaCi]}$
$\leq\alpha/2+\alpha/2=\alpha$ for all
$\acst{}\in[\oacst{\alpha},\infty)$. In other words,
$\ymRaCi\vee\xmRaCi$ is an upper bound for the radius of testing of
$\teFi[]{\alpha/2}$, which is summarised in the next theorem.
\begin{theorem}\label{m:re:mrt}
  For $\alpha\in(0,1)$ define
  $\oacst[2]{\alpha}:=\rSo+\rEv(10\lcst{\alpha/2}+30\lcst[2]{\alpha/2})$. Then
  \begin{equation*}
    \forall \acst{}\in[\oacst{\alpha},\infty):
    \mRiT{\wrcSo,\wrcEv}{\oSoS,\acst{}[\ymRaCi\vee\xmRaCi]}\leq\alpha,
  \end{equation*}
  i.e.  $\ymRaCi\vee\xmRaCi$ is an upper bound for the minimax radius
  of testing.
\end{theorem}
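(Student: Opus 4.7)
The plan is to instantiate the test $\teFi[]{\alpha/2}$ introduced in \eqref{de:it:op}, with oracle dimension $\mDii=\ymDiCi\wedge\xmDiCi$, and to decompose its maximal risk into Type I and Type II contributions, each controlled by the corresponding half of \cref{m:re:ub} at level $\alpha/2$. Summing the two halves and taking the infimum over all measurable tests will then deliver the claimed minimax bound.

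First, for the Type I error, I would apply \eqref{m:re:ub:e1} of \cref{m:re:ub} with $\alpha$ replaced by $\alpha/2$ and with $\Di=\mDii$. Since \eqref{m:re:ub:e1} holds for \emph{every} $\Di\in\Nz$, the first supremum in the definition of $\RiT[\nlImS,\nlOpS]{\teFi[]{\alpha/2}}{\rwcSo,\rwcEv}{\oSoS,\rho}$ is bounded by $\alpha/2$, independently of $\rho$ and of $\oSoS$.

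Second, for the Type II error, I would invoke \eqref{m:re:ub:e2} with $\alpha$ and $\beta$ both replaced by $\alpha/2$, again at $\Di=\mDii$. The triggering separation condition reads $\rho^2\geq (\rSo+\rEv\,\cst{\alpha/2,\alpha/2})[\symRaCi\vee\sxmRaCi]$. The only bookkeeping step is the elementary identity
\begin{equation*}
\cst{\alpha/2,\alpha/2} \;=\; 5\bigl(2\lcst{\alpha/2}+6\lcst[2]{\alpha/2}\bigr)\;=\;10\lcst{\alpha/2}+30\lcst[2]{\alpha/2},
\end{equation*}
whence the trigger simplifies to $\rho^2\geq \oacst[2]{\alpha}[\symRaCi\vee\sxmRaCi]$. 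For $\rho=\acst{}[\ymRaCi\vee\xmRaCi]$ with $\acst{}\geq\oacst{\alpha}$ this is automatic, and the second supremum in the maximal risk is therefore at most $\alpha/2$.

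Summing the two bounds yields $\RiT[\nlImS,\nlOpS]{\teFi[]{\alpha/2}}{\rwcSo,\rwcEv}{\oSoS,\acst{}[\ymRaCi\vee\xmRaCi]}\leq\alpha$, and taking the infimum over all tests produces the stated inequality. The analytic heavy lifting has already been completed inside \cref{m:re:ub}; the remaining content of the theorem is just the halving of the error level and the algebraic reconciliation of $\oacst{\alpha}$ with $\cst{\alpha/2,\alpha/2}$, so I do not anticipate any genuine obstacle.
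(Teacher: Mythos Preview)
Your proposal is correct and follows exactly the paper's own argument: instantiate the indirect test $\teFi[]{\alpha/2}$ at the oracle dimension $\mDii=\ymDiCi\wedge\xmDiCi$, apply \eqref{m:re:ub:e1} and \eqref{m:re:ub:e2} of \cref{m:re:ub} with $\beta=\alpha/2$, and bound the minimax risk by the risk of this single test. The paper's proof is even terser than yours, but the content is identical, including the identification $\oacst[2]{\alpha}=\rSo+\rEv\,\cst{\alpha/2,\alpha/2}$.
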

\begin{proof}[Proof of \cref{m:re:mrt}]
  The claim follows from \cref{m:re:ub} considering
  $\teFi[]{\alpha/2}$ as in \eqref{de:it:op} and the elementary bound
  \begin{multline*}
    \forall \acst{}\in[\oacst{\alpha},\infty):
    \mRiT{\wrcSo,\wrcEv}{\oSoS,\acst{}[\ymRaCi\vee\xmRaCi]}\\\leq
    \RiT{\teFi[]{\alpha/2}}{\wrcSo,\wrcEv}{\oSoS,\acst{}[\ymRaCi\vee\xmRaCi]}\leq\alpha.
  \end{multline*}
\end{proof}
The last result establishes the upper bound condition, and thus
together with the lower bound condition derived in \cref{lowerbound}
the minimax optimality of the testing radius $\ymRaCi\vee\xmRaCi$ and
hence the test $\teFi[]{\alpha/2}$.
\begin{remark}\label{m:rem:mrt1}
  Considering the signal detection task, i.e., $\oSoS=\nSoS$, we have
  $\xmRaCi=0$ for all $\nlOpS\in\pRz^\Nz$, and thus the minimax
  testing radius does not depend on the noise levels $\nlOpS$.
  Considering the goodness of fit task, i.e., $\oSoS\ne\nSoS$, for all
  $\nlImS\geq\nlOpS$ we have
  $\rqFPrS{\snlOpS\oSoS^2/\wEvS^2}\leq \rqFPrS{\snlImS\oSoS^2/\wEvS^2}
  \leq \VnormInf{\oSoS}^2\rqFPrS{\snlImS/\wEvS^2}$ and thus
  $\xmRaCi\leq\VnormInf{\oSoS} \ymRaCi$. In other words, in this
  situation $\xmRaCi$ is negligible compared to
  $\VnormInf{\oSoS}\ymRaCi$.
\end{remark}
\begin{remark}\label{m:rem:mrt2} In the homoscedastic case
  $\nlImS=\Nsuite[j]{\nlIm}$ and $\nlOpS=\Nsuite[j]{\nlOp}$ for
  $\nlIm,\nlOp\in\pRz$, we are especially interested in the behaviour
  of the radii of testing ${\yRaCi:=\ymRaCi}$ and ${\xRaCi:=\xmRaCi}$
  as $\nlIm,\nlOp\to 0$, which are then called rates of testing.  We call $\yRaCi$
  (respectively $\xRaCi$) parametric, if $ {\yRaCi/\nlIm}$ is bounded
  away from $0$ and infinity as $\nlIm\to 0$.  Since
  $\liminf_{\nlIm\to0}(\yRaCi/\nlIm)\geq\VnormInf{\wEvS}^{-2}$ and
  $\wSoS>\nSoS$, the rate becomes parametric if and only if
  $ \wEvS^{-2}\in\lp^2$.  Since $\wEvS\in\lp^\infty$, the rate
  $\yRaCi$ is always nonparametric, i.e.,
  $\liminf_{\nlIm\to0}\yRaCi/\nlIm=\infty$. On the other hand, for a
  goodness-of-fit task, the rate $\xRaCi$ is parametric if and only if
  $\oSoS^2/\wEvS^2\in\lp^2$. Note it is never faster than parametric,
  since
  $\liminf_{\nlOp\to0}(\xRaCi/\nlOp)\geq\Vnormlp{\oSoS^2}
  /\VnormInf{\wEvS}^2>0$. Finally, we shall stress that for fixed
  $\nlIm,\nlOp\in(0,1)$ there exists $\eta:=\eta(\nlIm,\nlOp)\in(0,1]$
  such that the additional assumption \eqref{assump1} is satisfied and, therefore,
  \cref{lowerbound2} establishes $\yRaCi\vee\xRaCi $ as a lower bound
  for the minimax radius of testing. If there exists an $\eta\in(0,1]$ such
  that the condition \eqref{assump1} holds uniformly as
  $\nlIm,\nlOp\to 0$, then $\yRaCi\vee\xRaCi $ is a minimax rate of
  testing.
\end{remark}
\begin{illustration}[homoscedastic case]\label{illustration:indirect}
  Throughout the paper we illustrate the order of the rates in the
  homoscedastic case $\nlImS=\Nsuite[j]{\nlIm}$ and
  $\nlOpS=\Nsuite[j]{\nlOp}$ under the following typical
  smoothness and ill-posedness assumptions. Concerning the class
  $\rwcSo$ we distinguish two behaviours of the sequence $\wSoS$,
  namely the \textbf{ordinary smooth} case:
  $\wSoS =\Nsuite[j]{ j^{-\wSoP}}$ for $\wSoP > 1/2$ where $\rwcSo$
  corresponds to a Sobolev ellipsoid, and the \textbf{super smooth}
  case: $\wSoS =\Nsuite{\exp(-j^{2\wSoP})}$ for $\wSoP > 0$, which can
  be interpreted as an analytic class of parameters. Concerning the
  class $\rwcEv$ we also distinguish two cases for the sequence
  $\wEvS$. Precisely, for $\wEvP>0$ we consider a \textbf{mildly
    ill-posed} model: $\wEvS = \Nsuite{j^{-\wEvP}}$ and a
  \textbf{severely ill-posed} model:
  $\wEvS =\Nsuite{\exp(-j^{2\wEvP})}$. Concerning the null hypothesis
  we restrict ourselves to two cases as well; the \textbf{signal
    detection task} $\oSoS=\nSoS$ and the \textbf{goodness-of-fit
    testing task} $\oSoS =\Nsuite{ j^{-\oSoP}}$ for some
  $\oSoP > 1/2$. The table displays the order of the optimal choice $\ymDiCi\wedge\xmDiCi$ for the dimension parameter as well as the order of the minimax rate
  $\ymRaCi\vee\xmRaCi$ for the signal detection task (with $\xmRaCi=0$
  as discussed in \cref{m:rem:mrt1}) and the goodness-of-fit task.
  Keep in mind that the rate $\ymRaCi$ does not depend on the null
  hypothesis, therefore it is the same for all $\oSoS \in \lp^2$.  In
  accordance with \cref{m:rem:mrt1}, $\xmRaCi$ is parametric for the
  goodness-of-fit task whenever $\oSoS^2/\wEvS^2\in\lp^2$.  Note that
  in all three cases the additional assumption \eqref{assump1} is
  satisfied uniformly in both noise levels, and hence
  $\ymRaCi\vee\xmRaCi $ is a minimax rate
  of testing due to \cref{lowerbound2} (see \cref{m:rem:mrt2}).\\[3ex]
  \centerline{\begin{tabular}{@{}ll|ll|l@{}l@{}r@{}}\toprule
      \multicolumn{7}{@{}l@{}}{Order of the minimax-optimal dimension
        $\ymDiCi\wedge\xmDiCi$ and rate
        $\ymRaCi\vee\xmRaCi$}\\\midrule
                $\wSoS$&$\wEvS$&\hfill$\ymDiCi$\hfill\null&\hfill$\ymRaCi$\hfill\null&\hfill$\xmDiCi$\hfill\null&\hfill$\xmRaCi$\hfill\null&\\
                {\smaller(smooth.)}&{\smaller(ill-pos.)}&\multicolumn{2}{|c|}{\smaller
                                                          $\oSoS \in
                                                          \lp^2$}&\multicolumn{2}{c@{}}{\smaller$\oSoS
                                                                   =\Nsuite{
                                                                   j^{-\oSoP}}$}\\\midrule
                $\Nsuite{ j^{-\wSoP}}$ &
                                         $\Nsuite{j^{-\wEvP}}$&$\nlIm^{-\frac{4}{4\wSoP+4\wEvP+1}}$&$\nlIm^{\frac{4\wSoP}{4\wSoP+4\wEvP+1}}$&
                                                                                                                                              $\begin{array}{@{}l}\nlOp^{-\frac{4}{4\wSoP+4(\wEvP-\oSoP)+1}}\\\nlOp^{-\frac{1}{\wSoP}
                                                                                                                                                 }\\\nlOp^{-\frac{1}{\wSoP}}\end{array}$&
                                                                                                                                                                                          $\begin{array}{l@{}}\nlOp^{\frac{4\wSoP}{4\wSoP+4(\wEvP-\oSoP)+1}}\\|\log\nlOp|^{\frac{1}{4}}\nlOp\\\nlOp\end{array}$&
                                                                                                                                                                                                                                                                                                                 $\begin{array}{@{}l@{}}{\scriptstyle\oSoP-\wEvP<1/4}\\{\scriptstyle\oSoP-\wEvP=1/4}\\{\scriptstyle\oSoP-\wEvP>1/4}\end{array}$\\\midrule
                $\Nsuite{e^{-j^{2\wSoP}}}$&$\Nsuite{j^{-\wEvP}}$&$|\log\nlIm|^{\frac{1}{2\wSoP}}$&$|\log
                                                                                                   \nlIm
                                                                                                   |^{\frac{4\wEvP+1}{8\wSoP}}\nlIm$&
                                                                                                                                      $\begin{array}{@{}l}|\log\nlOp|^{\frac{1}{2\wSoP}}\\|\log\nlOp|^{\frac{1}{2\wSoP}}\\|\log\nlOp|^{\frac{1}{2\wSoP}}\end{array}$&
                                                                                                                                                                                                                                                                      $\begin{array}{l}|\log\nlOp|^{\frac{4\wEvP-4\oSoP+1}{8\wSoP}}\nlOp\\(\log|\log\nlOp|)^{\frac{1}{4}}\,\nlOp\\\nlOp\end{array}$&
                                                                                                                                                                                                                                                                                                                                                                                                     $\begin{array}{l@{}}{\scriptstyle\oSoP-\wEvP<1/4}\\{\scriptstyle\oSoP-\wEvP=1/4}\\{\scriptstyle\oSoP-\wEvP>1/4}\end{array}$\\\midrule
                $\Nsuite{ j^{-\wSoP}}$&$\Nsuite{e^{-j^{2\wEvP}}}$ &
                                                                    $|\log\nlIm|^{\frac{1}{2\wEvP}}$&$|\log\nlIm|^{-\frac{\wSoP}{2\wEvP}}$
                                                                                     &
                                                                                       $
                                                                                       |\log
                                                                                       \nlOp|^{\frac{1}{2\wEvP}}$
                                                                                                                &
                                                                                                                  $|\log
                                                                                                                  \nlOp|^{-\frac{\wSoP}{2\wEvP}}$
                \\\bottomrule
\end{tabular}}\end{illustration}
\begin{remark} 
  Let us note that by applying Markov's inequality, it can be shown that the test
  $\Ind{\set{\tT_{\mDii} > 0}}$ with the simplified test statistic
  $\tT_{\mDii} := \hqF[\mDii]
  -\rqFPr[\mDii]{\sonlImS/\wEvS^2}\sqrt{2/\alpha}$ and $\mDii$ as in \eqref{de:it:op}, also attains the
  minimax radius of testing $\ymRaCi\vee\xmRaCi$. The approach of
  deriving radii of testing by applying Markov's inequality has for
  example been used in \cite{Kroll2019}.  Since we are in
  \cref{sec:adaptation} also concerned with adaptive Bonferroni
  aggregation, we need the sharper bound given in \cref{m:re:ub} for
  the threshold constant in terms of $\alpha$. This directly
  translates to the cost of adaptivity.
\end{remark}
The test $\teFi[\Di,]{\alpha}$ in \eqref{de:it} explicitly uses the
knowledge of $\wEvS$, which determines the asymptotic behaviour of the
sequence $\EvS \in \rwcEv$.  Inspired by
\cite{LaurentLoubesMarteau2011}, as an alternative we consider a
direct testing approach next.

%
%
%
%
\subsection{Direct testing procedure}\label{subsec:upper:direct}
In this section we derive an upper bound for the radius of testing
based on the estimation of the energy of the parameter
$\EvS(\SoS- \oSoS)$ instead of $\tfrac{\EvS}{\wEvS}(\SoS- \oSoS)$ as
in the section before. Precisely, consider $\tqFS=\Nsuite[\Di]{\tqF}$,
a sequence of unbiased estimators
$\tqF:=\sum_{j\in\nset{\Di}}((\yOb[j]-\oSo[j]\xOb[j])^2-\onlIm[j]^2)$ of
$\qFPr{\EvS(\SoS-\oSoS)}=\sum_{j\in\nset{\Di}}\Ev[j]^2(\So[j]-\oSo[j])^2$.
To formulate a result similar to \cref{m:re:ub}, we introduce for a
sequence $x_{\mbullet} \in \IR^\IN$ the minimum and minimiser,
respectively,
\begin{align}\label{direct:radius}
  (\mRaCd[x_{\mbullet}])^2:=\min(\wEvS^{-2}\rqFPrS{x_{\mbullet}^2}\vee\swSoS)
  \quad \text{and} \quad  \mDiCd[x_{\mbullet}]
  := \argmin(\wEvS^{-2}\rqFPrS{x_{\mbullet}^2}\vee\swSoS).
\end{align}
Replacing in \eqref{direct:radius} $x_{\mbullet}$ by the original and
the effective noise level $\nlImS$ and $\oSoS\nlOpS$ we establish
below $\ymRaCd \vee \xmRaCd$ as optimal achievable testing radius for
the direct test. Similar to \cref{m:re:ub} for the indirect test the
next result allows to evaluate the performance of the direct test
under both, the null hypothesis and the alternative.
\begin{proposition}\label{m:in:ub}
  Let $\alpha,\beta\in(0,1)$.  For each $\Di\in\Nz$ it holds
  \begin{equation}\label{m:in:ub:e1}
    \sup\set{\FuVg[\nlImS,\nlOpS]{\oSoS,\EvS}\big(\tqF
      >2\lcst{\alpha}\rqFPr{\sonlImS}+2\lcst[2]{\alpha}
      \mFPr{\sonlImS}\big),\EvS\in\rwcEv}
    \leq\alpha.
  \end{equation}
  Let $\mDid:=\ymDiCd\wedge\xmDiCd$ as in \eqref{direct:radius} and
  $\cst{\alpha,\beta}:= 5(\lcst{\alpha}+\lcst[2]{\alpha} +\lcst{\beta}
  +5\lcst[2]{\beta})$, then for each
  $\SoS-\oSoS\in\rwcSo\cap\lp[\rho]^2$ with
  $ \rho\geq
  (\rSo+\rEv\,\cst{\alpha,\beta})^{1/2}[\ymRaCd\vee\xmRaCd]$ it holds
  \begin{equation}\label{m:in:ub:e2}
    \sup\set{\FuVg[\nlImS,\nlOpS]{\SoS,\EvS}\big(\tqF[{\mDid}]
      \leq2\lcst{\alpha}\rqFPr[{\mDid}]{\sonlImS}+2\lcst[2]{\alpha}
      \mFPr[{\mDid}]{\sonlImS}\big),\EvS\in\rwcEv}
    \leq\beta.
  \end{equation}
\end{proposition}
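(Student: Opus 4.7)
The plan is to mirror the proof of \cref{m:re:ub}, substituting the unweighted statistic $\tqF$ for the weighted one and replacing the effective variance sequence $\sonlImS/\wEvS^2$ by $\sonlImS$ itself. First, I would check that with $\tyOb[j] := \yOb[j] - \oSo[j]\xOb[j] \sim \nVg[{\Ev[j](\So[j]-\oSo[j]),\sonlIm[j]}]$, under $(\yObS,\xObS) \sim \FuVg[\nlImS,\nlOpS]{\SoS,\EvS}$ the shifted statistic $Q_{\Di} := \tqF + \qFPr{\onlImS} = \sum_{j\in\nset{\Di}}\tyOb[j]^2$ follows $\FuVgQ[\onlImS]{\mu_{\mbullet},\Di}$ with noncentrality $\mu_{\mbullet} := \EvS(\SoS-\oSoS)$, so that \cref{re:qchi} applies directly with the variance sequence $\sonlImS$.

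For the size claim \eqref{m:in:ub:e1}, observe that $\tqF$ and its null distribution $\FuVgQ[\onlImS]{\nSoS,\Di}$ do not depend on $\EvS$; applying \eqref{qchi:e1} of \cref{re:qchi} to the central quadratic form $Q_{\Di}$ under $\SoS=\oSoS$ and subtracting $\qFPr{\onlImS}$ yields the threshold exactly as stated.

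For the power claim \eqref{m:in:ub:e2}, I would proceed in three steps. First, I would derive a lower bound on the noncentrality $\qFPr[\mDid]{\mu_{\mbullet}}=\sum_{j\leq\mDid}\Ev[j]^2(\So[j]-\oSo[j])^2$. Because the direct radius carries the factor $\wEv[\Di]^{-2}$ outside the sum $\rqFPrS{x_{\mbullet}^2}$, the class constraint $\Ev[j]^2\geq\rEv^{-1}\wEv[j]^2$ has to be uniformised to $\rEv^{-1}\wEv[\mDid]^2$ via the monotonicity of $\wEvS$, which (using $\SoS-\oSoS\in\rwcSo$ to bound the bias) gives $\rEv\,\qFPr[\mDid]{\mu_{\mbullet}}\geq\wEv[\mDid]^2(\Vnormlp{\SoS-\oSoS}^2-\rSo\wSo[\mDid]^2)$. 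Second, I would plug in the separation $\Vnormlp{\SoS-\oSoS}^2\geq(\rSo+\rEv\,\cst{\alpha,\beta})[\ymRaCd\vee\xmRaCd]^2$ and invoke the direct analogue of \cref{re:argmin}, namely $[\ymRaCd\vee\xmRaCd]^2=\wEv[\mDid]^{-2}[\rqFPr[\mDid]{\snlImS}\vee\rqFPr[\mDid]{\oSoS^2\snlOpS}]\vee\wSo[\mDid]^2$, absorbing the $\rSo\wSo[\mDid]^2$-term using $[\ymRaCd\vee\xmRaCd]^2\geq\wSo[\mDid]^2$. The $\wEv[\mDid]^2$ factors then cancel, and the elementary bound $\rqFPr[\mDid]{\sonlImS}\leq 2[\rqFPr[\mDid]{\snlImS}\vee\rqFPr[\mDid]{\oSoS^2\snlOpS}]$ yields $\qFPr[\mDid]{\mu_{\mbullet}}\geq\tfrac{1}{2}\cst{\alpha,\beta}\rqFPr[\mDid]{\sonlImS}$. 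Third, using $\mFPr\leq\rqFPr$ together with the precise value $\cst{\alpha,\beta}=5(\lcst{\alpha}+\lcst[2]{\alpha}+\lcst{\beta}+5\lcst[2]{\beta})$, this lower bound translates into $\tfrac{4}{5}\qFPr[\mDid]{\mu_{\mbullet}}\geq 2\lcst{\alpha}\rqFPr[\mDid]{\sonlImS}+2\lcst[2]{\alpha}\mFPr[\mDid]{\sonlImS}+2(\lcst{\beta}+5\lcst[2]{\beta})\rqFPr[\mDid]{\sonlImS}$, which after rearrangement and \eqref{qchi:e2} of \cref{re:qchi} places the test's threshold below $\quVgQ[\onlImS]{\mu_{\mbullet},\mDid}{1-\beta}-\qFPr[\mDid]{\onlImS}$, proving \eqref{m:in:ub:e2}.

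The only genuine obstacle compared with the indirect argument is the uniformisation in the first step above: because $\tqF$ carries no $\wEvS$-weight, one must extract the worst-case $\wEv[\mDid]^2$ from the noncentrality in order to cancel the $\wEv[\mDid]^{-2}$ sitting outside $\rqFPrS{x_{\mbullet}^2}$ in the direct radius. Once this monotonicity step is in place, everything else is a term-by-term translation of the indirect proof with $\stnlImS$ replaced by $\sonlImS$.
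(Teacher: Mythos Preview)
Your proposal is correct and follows the paper's own proof essentially step for step: the same identification $Q_{\Di}=\tqF+\qFPr{\onlImS}\sim\FuVgQ[\onlImS]{\EvS(\SoS-\oSoS),\Di}$, the same application of \eqref{qchi:e1} for \eqref{m:in:ub:e1}, and for \eqref{m:in:ub:e2} the same monotonicity-based uniformisation $\rEv\,\wEv[\mDid]^{-2}\qFPr[\mDid]{\mu_{\mbullet}}\geq\Vnormlp{\SoS-\oSoS}^2-\rSo\swSo[\mDid]$ combined with \cref{re:argmin} and the bound $2[\rqFPr[\mDid]{\snlImS}\vee\rqFPr[\mDid]{\oSoS^2\snlOpS}]\geq\rqFPr[\mDid]{\sonlImS}\geq\mFPr[\mDid]{\sonlImS}$ to feed into \eqref{qchi:e2}. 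The ``genuine obstacle'' you single out is precisely the one extra line the paper highlights relative to the indirect case.
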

\begin{proof}[Proof of \cref{m:in:ub}] We note that
  $(\yObS,\xObS)\sim\FuVg[\nlImS,\nlOpS]{\SoS,\EvS}$ implies
  $Q_{\Di}:=\tqF+\qFPr{\onlImS}\sim\FuVgQ[\tnlImS]{\mu_{\mbullet},\Di}$
  with $\tnlImS:=\onlImS$ and $\mu_{\mbullet}
  :=\EvS(\SoS-\oSoS)$, where we again use the notation of \cref{re:qchi} in the appendix. Therefore, the proof of
  \eqref{m:in:ub:e1} follows analogously to the proof of
  \eqref{m:re:ub:e1} in \cref{m:re:ub} by applying \cref{re:qchi}. Similar calculations as in the
  proof of \eqref{m:re:ub:p2} show that for each
  $\SoS-\oSoS\in \rwcSo\cap\lp[\rho]^2$ with
  $ \rho\geq
  \sqrt{\rSo+\rEv\cst{\alpha,\beta}}[\ymRaCd\vee\xmRaCd]$, we obtain
  \begin{equation}\label{m:in:ub:p1}
    \Vnormlp{\SoS-\oSoS}^2
    \geq \rSo\wSoDi[\mDid]^2 +
    \rEv\wEv[\mDid]^{-2} 
    \frac{5}{2}\big(\lcst{\alpha}\rqFPr[\mDid]{\stnlImS}+\lcst[2]{\alpha}
    \mFPr[\mDid]{\stnlImS}+\rqFPr[\mDid]{\stnlImS}(\lcst{\beta}+5\lcst[2]{\beta})\big)
  \end{equation}   
  using
  $(\ymRaCd\vee\xmRaCd)^2=\wEv[\mDid]^{-2}\rqFPr[\mDid]{\snlImS}
  \vee\wEv[\mDid]^{-2}\rqFPr[\mDid]{\snlOpS\oSoS^2} \vee\swSo[\mDid]$
  due to \cref{re:argmin} and
  $2[\rqFPr[\mDid]{\snlImS}\vee\rqFPr[\mDid]{\snlOpS\oSoS^2}] \geq
  \rqFPr[\mDid]{\stnlImS}\geq \mFPr[\mDid]{\stnlImS}$. Moreover, for
  each $\Di\in\Nz$ and $\EvS\in\rwcEv$ we have
  $\rEv\wEv^{-2}\qFPr{\mu_{\mbullet}}\geq\qFPr{\SoS-\oSoS}=
  \Vnormlp{\SoS-\oSoS}^2-\sbF{\SoS-\oSoS}$, which in turn for each
  $\SoS-\oSoS\in\rwcSo$ implies
  $\rEv\wEv^{-2}\qFPr{\mu_{\mbullet}}\geq\Vnormlp{\SoS-\oSoS}^2-\rSo\swSo[\Di]$. This
  bound together with \eqref{m:in:ub:p1} implies
  $\tfrac{4}{5}\qFPr[\mDid]{\mu_{\mbullet}}\geq
  2\lcst{\alpha}\rqFPr[\mDid]{\stnlImS}+2\lcst[2]{\alpha}
  \mFPr[\mDid]{\stnlImS}+\rqFPr[\mDid]{\stnlImS}
  2(\lcst{\beta}+5\lcst[2]{\beta})$. Rearranging the last inequality
  and proceeding as in the proof of \eqref{m:re:ub:e2} we obtain
  \eqref{m:in:ub:e2}, which completes the proof.
\end{proof}
\paragraph{Definition.}For $\alpha \in (0,1)$ and $\Di\in\Nz$ consider
the test statistic and the test
\begin{equation}\label{de:dt}
  \teSd[\Di,]{\alpha}:=\tqF-2\lcst{\alpha}\rqFPr{\sonlImS}-
  2\lcst[2]{\alpha}\mFPr{\sonlImS}\quad\text{ and }\quad
  \teFd[\Di,]{\alpha}:=\Ind{\set{\teSd[\Di,]{\alpha}>0}}.
\end{equation}
Exploiting \eqref{m:in:ub:e1}, $\teFd[\Di,]{\alpha/2}$ defined in
\eqref{de:dt} is a level-$\alpha/2$-test for any
$\Di\in\Nz$. Moreover,
\begin{equation}\label{de:dt:op}
  \teFd[]{\alpha/2}:=\teFd{\alpha/2}\quad\text{with
  }\mDid:=\ymDiCd\wedge\xmDiCd\text{ as in \eqref{direct:radius}}
\end{equation}  
is a $(1-\alpha/2)$-powerful test over
$\oacst{\alpha}[\ymRaCd\vee\xmRaCd]$-separated alternatives due to
\eqref{m:in:ub:e2} with $\beta=\alpha/2$ and
$\oacst[2]{\alpha}:=\rSo+\rEv(10\lcst{\alpha/2}+30\lcst[2]{\alpha/2})$. Hence, $\ymRaCd \vee\xmRaCd$ is an upper bound for the radius of testing of the test $  \teFd[]{\alpha/2}$.   Moreover, it is also a lower bound for its radius of
testing, which we prove in the next proposition.
\begin{proposition}\label{m:di:mrt}
  Let $\alpha\in(0,1)$. With
  $\oacst[2]{\alpha}:=\rSo+\rEv(10\lcst{\alpha/2}+30\lcst[2]{\alpha/2})$ we obtain
  \begin{equation}\label{m:di:mrt:e1}
    \forall \acst{}\in[\oacst{\alpha},\infty):
    \RiT{\teFd[]{\alpha/2}}{\wrcSo,\wrcEv}{\oSoS,\acst{}[\ymRaCd\vee\xmRaCd]}\leq \alpha,
  \end{equation}
  and with $\uacst[2]{\alpha}=\rSo\eta$ and
  $\eta\in(0,\swSo[\mDid]/(\ymRaCd \vee\xmRaCd)^2]$ it holds
  \begin{equation}\label{m:di:mrt:e2}
    \forall A \in(0,\uacst{\alpha}]:
    \RiT{\teFd[]{\alpha/2}}{\rwcSo,\rwcEv}{\oSoS,A [\ymRaCd \vee\xmRaCd]} \geq 1-\alpha,
  \end{equation} 
  i.e.  $\ymRaCd \vee\xmRaCd$ is a radius of testing of the test
  $\teFd[]{\alpha/2}$.
\end{proposition}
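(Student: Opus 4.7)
The plan is to handle the two halves of the proposition separately: the upper bound \eqref{m:di:mrt:e1} by bookkeeping on top of \cref{m:in:ub}, and the lower bound \eqref{m:di:mrt:e2} by exhibiting an alternative that the direct test is structurally unable to detect. For \eqref{m:di:mrt:e1} I would follow the pattern of \cref{m:re:mrt}: for $A\geq\oacst{\alpha}$ I split $\RiT{\teFd[]{\alpha/2}}{\rwcSo,\rwcEv}{\oSoS,A[\ymRaCd\vee\xmRaCd]}$ into the two suprema in its definition and apply \cref{m:in:ub} with $\beta=\alpha/2$ at dimension $\Di=\mDid$. Inequality \eqref{m:in:ub:e1} bounds the type I supremum by $\alpha/2$, while the definition of $\oacst{\alpha}$ exactly matches the threshold $(\rSo+\rEv\,\cst{\alpha/2,\alpha/2})^{1/2}[\ymRaCd\vee\xmRaCd]$ with $\cst{\alpha/2,\alpha/2}=10\lcst{\alpha/2}+30\lcst[2]{\alpha/2}$, so that \eqref{m:in:ub:e2} bounds the type II supremum by $\alpha/2$ as well, giving the total bound $\alpha$.

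The heart of the proof is the lower bound \eqref{m:di:mrt:e2}. The key structural observation is that the test statistic $\teSd[\mDid,]{\alpha/2}$, and hence $\teFd[]{\alpha/2}$, depends on $(\yObS,\xObS)$ only through the coordinates $j\in\nset{\mDid}$, so any mass placed at indices above $\mDid$ is invisible to the test. Given $A\in(0,\uacst{\alpha}]$, I would construct a single-spike perturbation by setting $\tSo[\mDid+1]:=A[\ymRaCd\vee\xmRaCd]$ and $\tSo[j]:=0$ for $j\neq\mDid+1$, and then consider $\SoS:=\oSoS+\tSoS$ together with $\EvS:=\wEvS\in\rwcEv$. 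By construction $\Vnormlp{\SoS-\oSoS}^2=A^2(\ymRaCd\vee\xmRaCd)^2$, so $\SoS-\oSoS\in\lp[A(\ymRaCd\vee\xmRaCd)]^2$.

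To certify that $\tSoS\in\rwcSo$, it suffices to check $\sbF[l]{\tSoS}\leq\rSo\swSo[l]$ for every $l\in\Nz$. For $l\geq\mDid+1$ the bias vanishes, while for $l\leq\mDid$ the hypothesis $\eta\leq\swSo[\mDid]/(\ymRaCd\vee\xmRaCd)^2$ together with $\uacst[2]{\alpha}=\rSo\eta$ and the monotonicity of $\wSoS$ from \cref{min:ass} yields
\[
\sbF[l]{\tSoS}=A^2(\ymRaCd\vee\xmRaCd)^2\leq\rSo\eta(\ymRaCd\vee\xmRaCd)^2\leq\rSo\swSo[\mDid]\leq\rSo\swSo[l].
\]
Because $\tSo[j]=0$ for $j\leq\mDid$, the joint law of $(\yOb[j],\xOb[j])_{j\leq\mDid}$ under $\FuVg[\nlImS,\nlOpS]{\SoS,\wEvS}$ coincides with its law under $\FuVg[\nlImS,\nlOpS]{\oSoS,\wEvS}$, so $\tqF[\mDid]$ has the same distribution under both measures. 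Applying \eqref{m:in:ub:e1} at level $\alpha/2$ to the null then gives
\[
\FuVg[\nlImS,\nlOpS]{\SoS,\wEvS}(\teFd[]{\alpha/2}=1)=\FuVg[\nlImS,\nlOpS]{\oSoS,\wEvS}(\teFd[]{\alpha/2}=1)\leq\alpha/2,
\]
so the type II supremum in the risk is at least $1-\alpha/2\geq 1-\alpha$, establishing \eqref{m:di:mrt:e2}.

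The main technical obstacle is the compatibility of the spike energy with the bias budget: the condition $\eta\leq\swSo[\mDid]/(\ymRaCd\vee\xmRaCd)^2$ is exactly what matches $A^2(\ymRaCd\vee\xmRaCd)^2$ against the smallest allowed bias $\rSo\swSo[\mDid]$, and the monotonicity of $\wSoS$ then propagates this inequality to all earlier indices. Once this algebraic balance is in place, the invisibility of the direct test to frequencies above $\mDid$ makes the type II error trivially large.
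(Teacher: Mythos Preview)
Your proof is correct and follows essentially the same route as the paper: the upper bound is obtained from \cref{m:in:ub} with $\beta=\alpha/2$, and the lower bound rests on the same structural observation that $\teFd[]{\alpha/2}$ is blind to coordinates $j>\mDid$, with a single spike at $j=\mDid+1$ serving as the undetectable alternative. The only cosmetic difference is that the paper fixes the spike height at $\sqrt{\rSo}\,\wSo[\mDid]$ and checks $\Vnormlp{\tSoS}\geq A[\ymRaCd\vee\xmRaCd]$, whereas you set the spike height to $A[\ymRaCd\vee\xmRaCd]$ and check membership in $\rwcSo$; both arguments use the balance condition on $\eta$ in the same way.
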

\begin{proof}[Proof of \cref{m:di:mrt}]Firstly, \eqref{m:di:mrt:e1} is
  an immediate consequence of \cref{m:in:ub} and we omit the
  details. Secondly, consider \eqref{m:di:mrt:e2}. We note that for
  each $\EvS\in\rwcEv$ and $\SoS-\oSoS \in \wrcSo$ with
  $\qFPr[\mDid]{\SoS-\oSoS}=0$ it holds
  $ \tqF+\qFPr{\onlImS}\sim\FuVgQ[\onlImS]{\nSoS,\Di}$ and thus
  $\alpha/2\geq \FuVg[\nlImS,\nlOpS]{\SoS, \EvS}(\teFd[]{\alpha} = 1)$
  due to \eqref{qchi:e1} in \cref{re:qchi}. For any
  $\SoS-\oSoS\in\rwcSo$ with $\qFPr[\mDid]{\SoS-\oSoS}=0$ and
  $\sbF[\mDid]{\SoS-\oSoS} = \rSo\swSo[\mDid]$, for example
  $\SoS-\oSoS=\Nsuite{\sqrt{\rSo}\wSo[\mDid]\Ind{\set{j=\mDid+1}}}$,
  it immediately follows
  $ \uacst[2]{\alpha}[\ymRaCd\vee\xmRaCd]^2 \leq \rSo\swSo[\mDid]=
  \sbF[\mDid]{\SoS-\oSo} = \Vnormlp{\SoS-\oSo}^2.$ Hence, for all
  $ A \in(0,\uacst{\alpha}]$ we obtain
  \begin{align*}
    \RiT{\teFd[]{\alpha/2}}{\rwcSo,\rwcEv}{\oSoS,A
    [\ymRaCd\vee\xmRaCd]}
    \geq \FuVg[\nlImS,\nlOpS]{\SoS, \EvS}(\teFd[]{\alpha} = 0) \geq 1 - \alpha/2.
  \end{align*}
  which shows \eqref{m:di:mrt:e2} and completes the proof.
\end{proof}
\begin{remark}\label{m:di:mrt:rem1}
  Considering the signal detection task, i.e., $\oSoS=\nSoS$, we have
  $\xmRaCd=0$ for all $\nlOpS\in\pRz^\Nz$, and thus the testing radius
  does not depend on the noise level $\nlOpS$.  Moreover, we shall
  emphasise that for all $\nlImS\geq\nlOpS$ we have
  $\wEvS^{-2}\rqFPrS{\snlOpS\oSoS^2}\leq \VnormInf{\oSoS}^2
  \wEvS^{-2}\rqFPrS{\snlImS}$ and thus
  $\xmRaCd\leq\VnormInf{\oSoS} \ymRaCd$. In other words, $\xmRaCd$ is
  negligible compared to $\VnormInf{\oSoS}\ymRaCd$ for all
  $\nlImS\geq\nlOpS$.  Let us briefly discuss under which conditions
  the direct test attains the minimax radius $\ymRaCi \vee
  \xmRaCi$. For any $\nlImS\in\pRz^\Nz$ the elementary inequality
  $\wEvS^{-2} \rqFPrS{\snlImS}\geq\rqFPrS{\snlImS/\wEvS^2}$ shows
  $\ymRaCd\geq\ymRaCi$. On the other hand, in the signal detection
  case, if there exists $c\in\pRz$ such that also
  $\wEvS^{-2} \rqFPrS{\snlImS}\leq c\rqFPrS{\snlImS/\wEvS^2}$, then
  the direct test $\teFd[]{\alpha/2}$ as in \eqref{de:dt:op} attains
  the minimax radius $\ymRaCi$. Note that, however, the additional condition is
  sufficient but not necessary as we will see in the illustration
  below.  Considering the goodness-of-fit task, i.e.,
  $\oSoS \not= \nSoS$, for all $\nlOpS\in\pRz^\Nz$ we obtain
  $\xmRaCd\geq\xmRaCi$ by exploiting again the elementary inequality
  $\wEvS^{-2}
  \rqFPrS{\snlOpS\oSoS^2}\geq\rqFPrS{\snlOpS\oSoS^2/\wEvS^2}$. Therefore,
  if there exists a $c\in\pRz$ such that
  $\wEvS^{-2} \rqFPrS{\snlImS}\leq c\rqFPrS{\snlImS/\wEvS^2}$ and also
  $\wEvS^{-2} \rqFPrS{\snlOpS\oSoS^2}\leq
  c\rqFPrS{\snlOpS\oSoS^2/\wEvS^2}$, then the direct test
  $\teFd[]{\alpha/2}$ attains the minimax radius $\ymRaCi\vee\xmRaCi$,
  where these additional conditions are again sufficient but not
  necessary.
\end{remark}
\begin{illustration}\label{illustration:direct}
  In the homoscedastic case, we illustrate the order of the rate and
  corresponding dimension parameter of the direct test
  $\teFd[]{\alpha/2}$ defined in \eqref{de:dt:op} by considering the
  typical smoothness and ill-posedness assumptions as in
  \cref{illustration:indirect}. The table displays the order of the
  rate $\ymRaCd\vee\xmRaCd$ for the signal detection task (with
  $\xmRaCd=0$ as discussed in \cref{m:di:mrt:rem1}) and the
  goodness-of-fit task. In comparison to \cref{illustration:indirect}
  we shall emphasise that in all three cases the order of $\ymRaCd$
  and $\ymRaCi$ coincide. Note that there exists a $c\in\pRz$ such that
  $\wEvS^{-2} \rqFPrS{\snlImS}\leq c\rqFPrS{\snlImS/\wEvS^2}$ in case
  of a mildly ill-posed model. In a severely ill-posed model, however,
  there exists no such $c$. Nonetheless, in both cases the direct test
  performs optimally with respect to the noise level
  $\nlIm$. Comparing the orders of $\xmRaCd$ and $\xmRaCi$ we note that in
  both a mildly and severely ill-posed model there does not exist
  $c\in\pRz$ such that
  $\wEvS^{-2} \rqFPrS{\snlOpS\oSoS^2}\leq c
  \rqFPrS{\snlOpS\oSoS^2/\wEvS^2}$.  Even so, for severely ill-posed
  models the rate $\ymRaCd \vee \xmRaCd$ and the minimax rate
  $\ymRaCi\vee\xmRaCi$ are of the same order, and thus the direct test
  is minimax optimal. On the other hand, for mildly ill-posed models
  the rate $\xmRaCd$ is always nonparametric and might be much slower
  than the
  rate $\xmRaCi$, which can be parametric.\\[3ex]
  \centerline{\begin{tabular}{@{}ll|ll|ll@{}} \toprule
                \multicolumn{6}{@{}l@{}}{Order of the dimension parameter $\ymDiCd\wedge\xmDiCd$ and  rate $\ymRaCd\vee\xmRaCd$} \\
                \midrule
                $\wSoS$  & $\wEvS$  &\hfill$\ymDiCd$\hfill\null&\hfill$\ymRaCd$\hfill\null&\hfill$\xmDiCd$\hfill\null&\hfill$\xmRaCd$\hfill\null\\
                {\smaller (smoothness)}  & { \smaller (ill-posedness)}& \multicolumn{2}{|c|}{\smaller $\oSoS \in \lp^2$}& \multicolumn{2}{c@{}}{\smaller$\oSoS =\Nsuite{ j^{-\oSoP}}$} \\
                \midrule
                $\Nsuite{ j^{-\wSoP}}$ & $\Nsuite{j^{-\wEvP}}$ & $\nlIm^{-\frac{4}{4\wSoP+4\wEvP+1}}$ & $\nlIm^{\frac{4\wSoP}{4\wSoP+4\wEvP+1}}$  &$\nlOp^{-\frac{1}{\wSoP+\wEvP}}$&$\nlOp^{\frac{\wSoP}{\wSoP+\wEvP}}$ \\
                $\Nsuite{e^{-j^{2\wSoP}}}$ & $\Nsuite{j^{-\wEvP}}$ &
                                                                     $|\log
                                                                     \nlIm
                                                                     |^{\frac{1}{2\wSoP}}$
                                                               &
                                                                 $|\log
                                                                 \nlIm
                                                                 |^{\frac{4
                                                                 \wEvP+1}{8\wSoP}}\nlIm$
                                                                                          &
                                                                                            $|\log \nlOp |^{\frac{1}{2\wSoP}}$ & $|\log \nlOp |^{\frac{\wEvP}{2\wSoP}}\nlOp $ \\
                $\Nsuite{ j^{-\wSoP}}$ & $\Nsuite{e^{-j^{2\wEvP}}}$ &
                                                                      $|\log\nlIm|^{\frac{1}{2\wEvP}}$
                                                               &$|\log
                                                                 \nlIm|^{-\frac{\wSoP}{2\wEvP}}$
                                                                                          &
                                                                                            $|\log
                                                                                            \nlOp
                                                                                            |^{\frac{1}{2\wEvP}}$
                                                                                                                     &
                                                                                                                       $|\log
                                                                                                                       \nlOp|^{-\frac{\wSoP}{2\wEvP}}$
                \\\bottomrule
\end{tabular}}\end{illustration}
\paragraph{Direct testing task.}\cite{LaurentLoubesMarteau2011} show that for known operators, under specific smoothness and ill-posedness assumptions (covered
also in the \cref{illustration:indirect,illustration:direct} above), every minimax optimal test for the direct task is
also minimax optimal for the indirect task. Even under these specific
assumptions, this is no longer the case for unknown operators if
$\ymRaCd$ is negligible compared to $\xmRaCd$. Keeping
\cref{m:di:mrt:rem1} and \cref{illustration:indirect} in mind, we observe that the test $\teFd[]{\alpha/2}$ defined in
\eqref{de:dt:op} is not always optimal for the indirect task. Nonetheless,
we show below that it attains the minimax radius for the direct task,
which we formalise next. Introducing
$\EvS\wrcSo:=\{\EvS\SoS\in\lp^2:\SoS\in\wrcSo\}$, the direct testing
task can be written as
\begin{align}\label{directtesting:e1}
  H_0:  \EvS(\SoS - \oSoS)= \nSoS,\EvS \in \rwcEv \;
  \text{ against } \; H_1^\rho:\EvS(\SoS - \oSoS) \in
  \lp[\rho]^2 \cap \EvS\wrcSo,\EvS \in \rwcEv.
\end{align}
Given a test $\varphi$ we define its maximal risk w.r.t the direct testing problem (DP) in \eqref{directtesting:e1} by
\begin{multline*}
  \RdT{\teF}{\rwcSo,\rwcEv}{\oSoS,\rho} :=
  \sup\set{ \FuVg[\nlImS,\nlOpS]{\oSoS, \EvS}(\teF = 1):\EvS \in
    \rwcEv}\\
  + \sup\set{\FuVg[\nlImS,\nlOpS]{\SoS, \EvS}(\teF = 1):\EvS(\SoS -
    \oSoS)
    \in \lp[\rho]^2 \cap \EvS\wrcSo, \EvS\in\rwcEv}
\end{multline*}
and we characterise the difficulty of the direct testing task by the minimax risk
\begin{align*}
  \mRdT{\rwcSo,\rwcEv}{\oSoS,\rho}  
  := \inf_{\teF} \RdT{\teF}{\rwcSo,\rwcEv}{\oSoS,\rho}
\end{align*}
where the infimum is taken over all possible tests.  Keeping the definition
\eqref{direct:radius} and $\mDid:=\ymDiCd\wedge\xmDiCd$ in mind, we
define
\begin{multline}
  \ymRaCdp := \wEv[\mDid] \ymRaCd \quad \text{and} \quad
  \xmRaCdp := \wEv[\mDid] \xmRaCd. \hfill
\end{multline}
We show next that the minimax radius for the direct problem is
given by $\ymRaCdp\vee\xmRaCdp$.
\begin{proposition}\label{m.di.mrt}Let $\eta\in(0,1]$ satisfy
  \begin{equation}\label{m.di.mrt.c}
    \eta \leq
    \frac{\rqFPr[\mDid]{\snlImS}\vee\rqFPr[\mDid]{\snlOpS\oSoS^2}\wedge\wEv[\mDid]^2\swSo[\mDid]}{(\ymRaCdp \vee \xmRaCdp)^2}=\frac{\rqFPr[\mDid]{\snlImS}\vee\rqFPr[\mDid]{\snlOpS\oSoS^2}\wedge\wEv[\mDid]^2\swSo[\mDid]}{\rqFPr[\mDid]{\snlImS}\vee\rqFPr[\mDid]{\snlOpS\oSoS^2}\vee\wEv[\mDid]^2\swSo[\mDid]}.
  \end{equation}
  Let $\alpha \in (0,1)$. With
  $\uacst[2]{\alpha}:=\eta\big(\rSo\wedge
  \sqrt{2\log(1+2\alpha^2)}\big)$ we obtain
  \begin{equation}\label{m.di.mrt.e1}
    \forall A \in [0, \uacst{\alpha}]: 
    \mRdT[\nlImS,\nlOpS]{\rwcSo, \rwcEv}{\oSoS, A[ \ymRaCdp \vee
      \xmRaCdp]}
    \geq 1- \alpha,
 \end{equation}
 and with
 $\oacst[2]{\alpha}:=\rSo\rEv+10\lcst{\alpha/2}+30\lcst[2]{\alpha/2}$
 and $\teFd[]{\alpha/2}$ defined in \eqref{de:dt:op} it holds
 \begin{multline}\label{m.di.mrt.e2}
   \forall A \in [\oacst{\alpha},\infty): 
   \mRdT[\nlImS,\nlOpS]{\rwcSo, \rwcEv}{\oSoS, A [ \ymRaCdp \vee
     \xmRaCdp]}\\
   \leq \RdT[\nlImS,\nlOpS]{\teFd[]{\alpha/2}}{\rwcSo, \rwcEv}{\oSoS,
     A[ \ymRaCdp \vee \xmRaCdp]} \leq \alpha,
 \end{multline}
 i.e.  $\ymRaCdp \vee \xmRaCdp$ is a {minimax radius of testing} for the direct testing task
 \eqref{directtesting:e1}.
\end{proposition}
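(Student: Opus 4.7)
The plan is to treat the upper bound \eqref{m.di.mrt.e2} and the lower bound \eqref{m.di.mrt.e1} separately, each by adapting an argument already developed in the paper. For the upper bound I would re-use the test $\teFd[]{\alpha/2}$ from \eqref{de:dt:op}: its level bound \eqref{m:in:ub:e1} applies verbatim since it does not use any property of the alternative. Only the type II control has to be redone, the alternative now being $\Vnormlp{\EvS(\SoS-\oSoS)}^2\geq\rho^2$ instead of $\Vnormlp{\SoS-\oSoS}^2\geq\rho^2$. I would decompose
\[
\Vnormlp{\EvS(\SoS-\oSoS)}^2 = \qFPr[\mDid]{\EvS(\SoS-\oSoS)} + \textstyle\sum_{j>\mDid}\Ev[j]^2(\So[j]-\oSo[j])^2,
\]
and bound the tail using $\EvS^2\leq\rEv\wEvS^2$, monotonicity of $\wEvS$, the definition of $\rwcSo$, and \cref{re:argmin}, to obtain
$\sum_{j>\mDid}\Ev[j]^2(\So[j]-\oSo[j])^2\leq \rEv\wEv[\mDid]^2\sbF[\mDid]{\SoS-\oSoS}\leq \rSo\rEv\wEv[\mDid]^2\wSo[\mDid]^2\leq \rSo\rEv(\ymRaCdp\vee\xmRaCdp)^2$. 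Plugging in $\rho^2\geq\oacst[2]{\alpha}(\ymRaCdp\vee\xmRaCdp)^2$ with $\oacst[2]{\alpha}-\rSo\rEv=10\lcst{\alpha/2}+30\lcst[2]{\alpha/2}$ yields a lower bound on the non-centrality $\qFPr[\mDid]{\EvS(\SoS-\oSoS)}$ at exactly the level required to invoke the $\chi^2$-quantile inequality \eqref{qchi:e2} of \cref{re:qchi}; the remainder is identical to the corresponding step in \cref{m:in:ub}.

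For the lower bound I would follow the reduction scheme of \cref{lowerbound}, fixing $\EvS:=\wEvS\in\rwcEv$ and mixing uniformly over the hypercube $\{\oSoS+\tSoS^\tau:\tau\in\{\pm1\}^{\mDid}\}$. Inspired by the indirect construction, but rescaled so that the separation is measured in the direct norm, I would parametrise $\tSoS$ through $\bar t_\mbullet:=\wEvS\tSoS$, setting $\bar t_j:=0$ for $j>\mDid$ and
\[
\bar t_j := \frac{\sqrt{\zeta\eta}\,[\ymRaCdp\vee\xmRaCdp]}{\rqFPr[\mDid]{\sonlImS}}\,\sonlIm[j],\quad j\in\nset{\mDid},\qquad \zeta := \rSo\wedge\sqrt{2\log(1+2\alpha^2)}.
\]
A direct summation yields $\Vnormlp{\EvS\tSoS}^2=\zeta\eta(\ymRaCdp\vee\xmRaCdp)^2=\uacst[2]{\alpha}(\ymRaCdp\vee\xmRaCdp)^2$, so each $\oSoS+\tSoS^\tau$ sits on the direct $\uacst{\alpha}(\ymRaCdp\vee\xmRaCdp)$-separated alternative. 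As in the proof of \cref{lowerbound}, sufficiency of $\tyObS$ and \cref{lemma:adaptive_lower_bound} reduce the chi-square divergence between the resulting mixture and the null to controlling $\qFPr[\mDid]{\bar t_\mbullet^2/\sonlImS}=\zeta^2\eta^2(\ymRaCdp\vee\xmRaCdp)^4/\rqFPr[\mDid]{\sonlImS}^2$.

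The main technical step, and the place where the balance assumption \eqref{m.di.mrt.c} enters twice, is to verify both $\tSoS\in\rwcSo$ and this $\chi^2$-bound. Writing $A:=\rqFPr[\mDid]{\snlImS}\vee\rqFPr[\mDid]{\snlOpS\oSoS^2}$ and $B:=\wEv[\mDid]^2\wSo[\mDid]^2$, \cref{re:argmin} identifies $(\ymRaCdp\vee\xmRaCdp)^2=A\vee B$, and \eqref{m.di.mrt.c} reads $\eta(A\vee B)\leq A\wedge B$. The componentwise bounds $\sonlIm[j]\geq\nlIm[j]^2$ and $\sonlIm[j]\geq\oSo[j]^2\nlOp[j]^2$ give $\rqFPr[\mDid]{\sonlImS}\geq A$, whence $\eta(\ymRaCdp\vee\xmRaCdp)^2\leq A\leq\rqFPr[\mDid]{\sonlImS}$, which reduces the $\chi^2$-sum above to $\zeta^2\leq 2\log(1+2\alpha^2)$, exactly as in \cref{lowerbound}. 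For the regularity, for $l\leq\mDid$ the bias obeys
\[
\sbF[l]{\tSoS}\leq\textstyle\sum_{l<j\leq\mDid}\bar t_j^2/\wEv[j]^2\leq\wEv[\mDid]^{-2}\zeta\eta(\ymRaCdp\vee\xmRaCdp)^2\leq\zeta\wEv[\mDid]^{-2}B=\zeta\wSo[\mDid]^2\leq\rSo\wSo[l]^2,
\]
where I use $\eta(\ymRaCdp\vee\xmRaCdp)^2\leq B$ (the other half of \eqref{m.di.mrt.c}), the monotonicity of $\wEvS$ and $\wSoS$ from \cref{min:ass}, and $\zeta\leq\rSo$; the case $l>\mDid$ is trivial by construction. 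Combining this with the reduction argument \eqref{reduction} then completes the proof.
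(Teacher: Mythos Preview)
Your argument is correct and follows essentially the same route as the paper's own proof: the same hypercube construction $\tSo[j]=\sqrt{\zeta\eta}\,\mRadp\,\sonlIm[j]/(\wEv[j]\,\rqFPr[\mDid]{\sonlImS})$ for the lower bound (your $\bar t_j=\wEv[j]\tSo[j]$ is just a change of notation), the same use of \eqref{m.di.mrt.c} in both directions to verify regularity and the $\chi^2$-bound, and the same tail estimate $\qFPr[\mDid]{\EvS(\SoS-\oSoS)}\geq\Vnormlp{\EvS(\SoS-\oSoS)}^2-\rEv\rSo\wEv[\mDid]^2\swSo[\mDid]$ combined with \cref{re:qchi} for the upper bound.
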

\begin{proof}[Proof of \cref{m.di.mrt}] The proof of the lower bound
  \eqref{m.di.mrt.e1} follows along the lines of the proof of
  \cref{lowerbound}, using the same reduction argument with a mixture
  over vertices of hypercubes. Given
  $\mRadp := \ymRaCdp \vee \xmRaCdp$ and
  $\sonlImS=\snlImS+\oSoS^2\snlOpS$ let us define the parameter
  $\tSoS=\Nsuite{\tSo[j]}$ with $\tSo[j]:=0$ for
  $j>\mDid=\ymDiCd\wedge\xmDiCd$, and 
  \begin{equation*}
    \tSo[j]:=\tfrac{\mRadp\sqrt{\zeta\eta}}{\rqFPr[{\mDid}]{\sonlImS}}
    \frac{\sonlIm[j]}{\wEv[j]} \quad \text{ for } j\in\nset{\mDid},
    \quad\text{ and } \zeta:=\rSo\wedge \sqrt{2\log(1+\alpha^2)}.
  \end{equation*}
  We need to check that it satisfies the conditions \ref{lowerbound.a}
  and \ref{lowerbound.b} given in the proof of
  \cref{lowerbound}. Indeed with
  $\qFPrS{\sonlImS/\wEvS}\leq \wEvS^{-2}\qFPrS{\sonlImS}$ and
  $\sqrt{\eta}\mRadp \leq\wSo[\mDid] \wEv[\mDid]$ due to
  \eqref{m.di.mrt.c} we obtain \ref{lowerbound.a}, that is,
  $ \sbF[l]{\tSoS} \leq \qFPr[\mDid]{\tSoS} =\zeta\eta (\mRadp)^2
  \tfrac{\qFPr[\mDid]{\sonlImS/\wEvS}}{\qFPr[\mDid]{\sonlImS}}\leq
  \zeta\swSo[\mDid]\leq\rSo\swSo[\mDid] \leq \rSo\swSo[l] $ for all
  $l \in \nset{\mDid}$, $\sbF[l]{\tSoS} = 0 \leq \rSo\swSo[l]$ for all
  $l > \mDid$ and
  $ \Vnormlp{\wEvS\tSoS}^2=\qFPr[\mDid]{\wEvS\tSoS}
  =\zeta\eta(\mRadp)^2\tfrac{\qFPr[\mDid]{\sonlImS}}{\qFPr[\mDid]{\sonlImS}}=
  \zeta\eta(\mRadp)^2=\uacst[2]{\alpha}(\mRadp)^2$.  On the other
  hand, \ref{lowerbound.b} holds too, since
  $\eta^2(\mRadp)^4\leq
  [\rqFPr[\mDid]{\snlImS}\vee\rqFPr[\mDid]{\snlOpS\oSoS^2}]^2\leq
  \qFPr[{\mDid}]{\sonlImS}$ and $\zeta^2\leq 2 \log(1+\alpha^2)$ imply
  together
  $\qFPr[\mDid]{\wEvS^2\tSoS^2/\sonlImS}=\zeta^2
  \tfrac{\eta^2(\mRadp)^{4}}{(\qFPr[{\mDid}]{\sonlImS})^2}\qFPr[{\mDid}]{\sonlImS}
  \leq\zeta^2\leq 2 \log(1+\alpha^2)$. Combining \ref{lowerbound.a}
  and \ref{lowerbound.b} the claim \eqref{m.di.mrt.e1} follows.
  The upper bound \eqref{m.di.mrt.e2} is an immediate consequence of
  \begin{inparaenum}[i]\renewcommand{\theenumi}{\dgrau\rm(\roman{enumi})}
  \item\label{m.di.mrt.b1}
    $\sup\set{\FuVg[\nlImS,\nlOpS]{\oSoS,\EvS}
      \big(\teFd[]{\alpha/2}=1\big),\EvS\in\rwcEv}\leq\alpha/2$ due to
    \eqref{m:in:ub:e1} in \cref{m:in:ub}, and \item\label{m.di.mrt.b2}
    $\FuVg[\nlImS,\nlOpS]{\SoS,\EvS}\big(\teFd[]{\alpha/2}=0\big)\leq\alpha/2$
    for each $\EvS\in\rwcEv$ and
    $\EvS(\SoS-\oSoS)\in\EvS\rwcSo\cap\lp[\rho]^2$ with
    $ \rho\geq
  \oacst{\alpha}
    \mRadp$. \end{inparaenum}
  The proof of \ref{m.di.mrt.b2} is
  similar to the proof of \eqref{m:in:ub:e2} in
  \cref{m:in:ub}. Instead of \eqref{m:in:ub:p1} we obtain
  \begin{multline}\label{m.di.mrt.p2}
    \Vnormlp{\EvS(\SoS  - \oSoS)}^2
    \geq \rSo\rEv\wEv[\mDid]^{2}\swSo[\mDid] +
    [\rqFPr[\mDid]{\snlImS}\vee\rqFPr[\mDid]{\snlOpS\oSoS^2}]
    (10\lcst{\alpha/2}+30\lcst[2]{\alpha/2})
    \geq \rEv\rSo\wEv[\mDid]^2\swSo[\mDid] \\ +
    \rqFPr[\mDid]{\stnlImS}\tfrac{5}{2}
    \big(\lcst{\alpha/2}\rqFPr[\mDid]{\stnlImS}+\lcst[2]{\alpha/2}
    \mFPr[\mDid]{\stnlImS}+\rqFPr[\mDid]{\stnlImS}
    (\lcst{\alpha/2}+5\lcst[2]{\alpha/2})\big)
  \end{multline}   
  using
  $(\mRadp)^2=\rqFPr[\mDid]{\snlImS}\vee\rqFPr[\mDid]{\snlOpS\oSoS^2}
  \vee\wEv[\mDid]^{2}\swSo[\mDid]$ and
  $2[\rqFPr[\mDid]{\snlImS}\vee\rqFPr[\mDid]{\snlOpS\oSoS^2}]\geq
  \rqFPr[\mDid]{\stnlImS}\geq\mFPr[\mDid]{\stnlImS}$. Moreover, for
  each $\Di\in\Nz$ and $\EvS\in\rwcEv$ we have
  $\qFPr{\EvS(\SoS-\oSoS)}\geq
  \Vnormlp{\EvS(\SoS-\oSoS)}^2-\rEv\wEv[\Di]^2\sbF{\SoS-\oSoS}$ and hence
  $\qFPr{\EvS(\SoS-\oSoS)}\geq
  \Vnormlp{\EvS(\SoS-\oSoS)}^2-\rEv\rSo\wEv[\Di]^2\swSo[\Di]$
  for each $\SoS-\oSoS\in\rwcSo$. Together with
  \eqref{m.di.mrt.p2} this implies
  $\tfrac{4}{5}\qFPr[\mDid]{\EvS(\SoS-\oSoS)}\geq
  2\lcst{\alpha/2}\rqFPr[\mDid]{\stnlImS}+2\lcst[2]{\alpha/2}
  \mFPr[\mDid]{\stnlImS}+\rqFPr[\mDid]{\stnlImS}
  (2\lcst{\alpha/2}+10\lcst[2]{\alpha/2})$. Rearranging the last
  inequality we proceed exactly as in the proof of \eqref{m:re:ub:e2}
  in \cref{m:re:ub} and obtain the claim.
\end{proof}
The last result establishes the minimax optimality of the testing radius
$\ymRaCdp\vee\xmRaCdp$ for the direct testing task
\eqref{directtesting:e1} and hence also the minimax optimality of  the test $\teFd[]{\alpha/2}$ given in
\eqref{de:dt:op}. Similar findings as in \cref{m:rem:mrt1,m:rem:mrt2}
hold.
\begin{illustration}[homoscedastic case] We illustrate the order of
  the minimax rate for the direct testing task
  \eqref{directtesting:e1} by considering typical smoothness and
  ill-posedness assumptions as in the
  \cref{illustration:indirect,illustration:direct}. The table displays
  the order of the minimax rate $\ymRaCdp\vee\xmRaCdp$ for the direct
  signal detection (with
  $\xmRaCdp=0$) and the goodness-of-fit task.	\\[3ex]
  \centerline{\begin{tabular}{@{}ll|ll@{}} \toprule
                \multicolumn{4}{@{}l@{}}{Order of the minimax rate $\xmRaCdp \vee \ymRaCdp$ for the direct testing task} \\
                \midrule
                $\wSoS$  & $\wEvS$  & $\ymRaCdp$    & $\xmRaCdp$ \\
                {\smaller (smoothness)}  & { \smaller (ill-posedness)}& {\smaller $\oSoS \in \lp^2$}&  {\smaller$\oSoS =\Nsuite{ j^{-\oSoP}}$} \\
                \midrule
                $\Nsuite{ j^{-\wSoP}}$ & $\Nsuite{j^{-\wEvP}}$ & $ \nlIm ^{\frac{4\wSoP+4\wEvP}{4\wSoP+4\wEvP+1}}$ & $\nlOp$  \\
                $\Nsuite{e^{-j^{2\wSoP}}}$ &  $\Nsuite{j^{-\wEvP}}$ & $|\log \nlIm |^{\frac{1}{8\wSoP}}\nlIm  $ & $\nlOp $ \\
                $\Nsuite{ j^{-\wSoP}}$ & $\Nsuite{e^{-j^{2\wEvP}}}$ &$|\log  \nlIm|^{\frac{1}{8\wEvP}} \nlIm$  & $\nlOp$  \\
                \bottomrule
\end{tabular}}\end{illustration}

%
%
%
%
\section{Adaptation}\label{sec:adaptation}
\subsection{Description of the procedure}\label{sec:description}
For both the indirect and the direct test the optimal choice of the
dimension parameter relies on prior knowledge of the sequences
$\wSoS$ and $\wEvS$, which are typically unknown in practice. In this
section we study an aggregation of the tests for several dimension
parameters, which leads to a testing procedure that performs nearly
optimal over a wide range of regularity classes. We
first present the testing radii of these aggregation-tests, where
compared to the minimax radii of testing the noise level in the radii
of testing face a deterioration by a $\log$-factor only. Moreover, we
show that this deterioration is an unavoidable cost for adaptation.
 
Let us briefly describe a widely used aggregation strategy. For a
sequence of levels $(\alpha_{\Di})_{\Di \in \IN}$ let
$(\teststat)_{\Di\in \Nz}$ be a sequence of test statistics such that
$\test = \Ind{\{\teststat > 0\}}$ is a level-$\alpha_{\Di}$-test for
each $\Di \in \IN$. Note that both the indirect and the direct testing
procedures satisfy this condition by construction as shown in
\eqref{m:re:ub:e1} and \eqref{m:in:ub:e1} of \cref{m:re:ub,m:in:ub},
respectively. Given a finite collection $\mc K \subset \IN$ of
dimension parameters and $\alpha:= \sum_{\Di \in \mc K} \alpha_{\Di}$,
we consider the $\max$-test statistic and the corresponding
$\max$-test
\begin{align*}
  \maxteststat = \max_{\Di \in \cK} \teststat, \qquad
  \maxtest := \Ind{\{ \maxteststat > 0 \}},
\end{align*}
that is, the $\max$-test rejects the null hypothesis as soon as one of
the tests does. Due to the elementary inequality
\begin{align*} 
  \FuVg[\nlImS,\nlOpS]{\oSoS,\EvS}\lb \maxtest = 1 \rb =
  \FuVg[\nlImS,\nlOpS]{\oSoS,\EvS} \lb  \maxteststat > 0 \rb
  \leq \sum_{\Di \in \cK} 	\FuVg[\nlImS,\nlOpS]{\oSoS,\EvS}
  \lb\teststat
  > 0\rb\leq  \sum_{\Di \in \mc K} \alpha_{\Di} = \alpha,
\end{align*}
the $\max$-test $\maxtest$ is a level-$\alpha$-test. The type II error
probability of the $\max$-test can be controlled by any test contained
in the collection, since for all $\SoS \in \lp^2$ and
$\EvS \in \lp^\infty$
\begin{align*}
  \FuVg[\nlImS,\nlOpS]{\SoS,\EvS}\lb\maxtest= 0 \rb =
  \FuVg[\nlImS,\nlOpS]{\SoS,\EvS}\lb \maxteststat \leq  0\rb
  \leq \min_{\Di \in \cK}  \FuVg[\nlImS,\nlOpS]{\SoS,\EvS}\lb
  \teststat \leq 0 \rb
  = \min_{\Di \in \cK}  \FuVg[\nlImS,\nlOpS]{\SoS,\EvS}\lb \test = 0 \rb . 
\end{align*}
These two error bounds have oppositional effects on the choice of the
collection $\cK$. Roughly speaking, $\cK$ should not be too large due
to the aggregation of type I error probabilities. On the other hand, it
should still be large enough to minimise the type II error
probabilities. Typically, the choice of $\cK$ will depend on the
original and effective noise level, $\nlImS$ and $\oSoS\nlOpS$,
respectively.

The goal of the aggregation is to find testing strategies for which
the risk can be controlled over large families of alternatives. To
measure the cost for adaptation, we introduce factors $\cRaLo[\nlImS]$
and $\cRaLo[\nlOpS]$, which are typically called \textit{adaptive factors}
(cf. \cite{Spokoiny1996}), for a test $\phi_\alpha$ and a family of
alternatives $\CrwcSo \times \CrwcEv$, if for every
$\alpha \in (0,1)$ there exists a constant $\oacst{\alpha} \in \IR_+$
such that
\begin{equation}\label{adaptive:upper:bound}
  \forall \acst{}\in[\oacst{\alpha},\infty):
  \sup_{(\wSoS, \wEvS) \in \CwSo \times \CwEv}
  \RiT{\phi_\alpha}{\wrcSo,\wrcEv}{\oSoS,\acst{}
    [\yomRaCiaa \vee \xomRaCiaa]}\leq\alpha.
\end{equation}
Here, $\ymRaCi\vee \xmRaCi$ is the minimax radius of testing over $\rwcSo \times \rwcEv$ defined in  \eqref{indirect:radius}.  $\cRaLo[\nlImS]$ and $\cRaLo[\nlOpS]$ are called \textit{minimal adaptive factors} if in addition for every $\alpha \in (0,1)$ there exists a constant $\underline{A}_\alpha$ such that 
\begin{equation}\label{adaptive:lower:bound}
  \forall A \in [0,\underline{A}_\alpha]: \sup_{(\wSoS, \wEvS) \in \CwSo \times \CwEv}	\mRiT{\wrcSo,\wrcEv}{\oSoS,\acst{}[\yomRaCiaa  \vee \xomRaCiaa]}\geq  1 - \alpha.
\end{equation}
If the minimal adaptive factors tend to infinity as the noise levels decrease to zero, then this phenomenon is typically called \textit{lack of adaptability}. \\
In this section we first carry out an aggregation of the
indirect tests. Recall that the indirect test statistic \eqref{de:it}
explicitly uses the knowledge of the sequence $\wEvS$. Therefore, we
consider adaptation to $\CrwcSo \times \rwcEv$ for given $\wEvS$
only. We present the adaptive factors for the indirect-$\max$-test and
show that they coincide asymptotically with the minimal adaptive
factors. Afterwards as an alternative to the indirect-$\max$-test we
study an aggregation of the direct tests, which depend on the sequence
$\wEvS$ only through the choice of the optimal dimension
parameter. Hence, for a direct-$\max$-test we consider adaptation to
both $\CrwcSo$ and $\CrwcEv$.
\begin{remark} Let us briefly comment on possible choices of the error levels. Throughout the paper, given a level $\alpha \in (0,1)$ and a finite collection $\mc \cK \subset \IN$ we consider
  Bonferroni levels $\alpha_{\Di}:=\alpha/|\cK|$, $\Di\in\Nz$, i.e.,
  the same level $\alpha/|\cK|$ for each test statistic
  $\teSa{\alpha_{\Di}}$ in the collection $\Di\in\cK$. A usually proposed alternative is to select the value
  $\alpha^\circ:=\sup\{u\in(0,1):\FuVg[\nlImS,\nlOpS]{\oSoS,\EvS}(\max_{\Di
    \in \cK} \teSa{u}>0)\leq\alpha\}$ as a common level for all tests
  in the collection. By construction the max-test corresponding to the
  max-test statistic $\max_{\Di \in \cK} \teSa{\alpha^\circ}$ is a
  level-$\alpha$-test and it is at least as powerful as the max-test
  with Bonferroni levels, if in addition also
  $\max_{\Di\in \cK} \teSa{\alpha^\circ}\geq \max_{\Di\in \cK}
  \teSa{\alpha/|\cK|}$ holds. To be more precise, let us revisit
  the indirect test statistic $\teSi[\Di,]{\alpha}$ given in
  \eqref{de:it}. For $u\in(0,1)$ and $\Di\in\Nz$ we denote by
  $t_{\Di}(u)$ the $(1-u)$-quantile of $\hqF$ under the null hypothesis,
  i.e., $\FuVg[\nlImS,\nlOpS]{\oSoS,\EvS}(\hqF>t_{\Di}(u))=u$. Then
  for each $\Di\in\Nz$ and $\alpha\in(0,1)$ the test corresponding to
  the test-statistic $\teSa{\alpha}:=\hqF-t_{\Di}(\alpha)$ is a
  level-$\alpha$-test and $\teSa{\alpha}\geq\teSi[\Di,]{\alpha}$ due
  to \eqref{m:re:ub:e1} in \cref{m:re:ub}. Consequently, the
  level-$\alpha$ max-test corresponding to
  $\max_{\Di \in \cK} \teSa{\alpha^\circ}$ is at least as powerful as
  the level-$\alpha$ max-test
  $\teFi[\cK,]{\alpha}:=\Ind{\set{\teS[\cK,]{\alpha}>0}}$
  corresponding to the max-test statistic
  $\max_{\Di \in \cK} \teS[\cK,]{\alpha/|\cK|}$ with Bonferroni
  levels. However, in opposition to the Bonferroni levels there is no
  explicit expression for the value $\alpha^\circ$ and, hence, it has
  to be determined by a simulation study.
\end{remark}
\subsection{Adaptation to smoothness }\label{adaptation:indirect}
\paragraph{Indirect testing procedure.}In this section we consider the
adaptation of the indirect test in \eqref{de:it} to a family of
alternatives $\CrwcSo$. Given $\alpha \in (0,1)$ and a finite collection
$\mc \cK \subset \IN$ define the $\max$-test statistic with Bonferroni
levels and the corresponding $\max$-test
\begin{equation}\label{de:it:ad}
  \teS[\cK,]{\alpha}:=\max_{\Di\in\cK}(\teS[\Di,]{\alpha/|\cK|})
  \quad\text{ and }\quad
  \teFi[\cK,]{\alpha}:=\Ind{\set{\teSi[\cK,]{\alpha}>0}},
\end{equation}
which is a level-$\alpha$-test due to \eqref{m:re:ub:e1} in
\cref{m:re:ub}.  Its testing radius faces a deterioration compared to
the minimax radius due to the Bonferroni aggregation, which we
formalise next.  Analogously to \eqref{indirect:radius}, for each
$x_{\mbullet} \in \IR^\IN$ let us define the
minimum  over the collection $\cK$ 
\begin{equation}\label{it:radius:re}
  (\mRaCir[x_{\mbullet}])^2:=
  \min\nolimits_{\cK}(\mFPrS{x_{\mbullet}^2/\wEvS^2}\vee\swSoS)
\end{equation}
and the minimum and minimiser over $\cK$, respectively,
\begin{align}\label{it:radius:ad}
  (\mRaCia[x_{\mbullet}])^2:=
  \min\nolimits_{\cK}(\rqFPrS{x_{\mbullet}^2/\wEvS^2}\vee\swSoS)
  \quad \text{and} \quad  \mDiCia[x_{\mbullet}]:=
  \argmin\nolimits_{\cK}(\rqFPrS{x_{\mbullet}^2/\wEvS^2}\vee\swSoS).
\end{align}
We first prove an upper bound in terms of the reparametrised noise
level $\sonlImS=\snlImS+\oSoS^2\snlOpS$ and the adaptive factor
$\dr\cRaLo:=(1\vee\log|\cK|)^{1/4}$.  The upper bound consists of the
two terms $\mRaCir$ and $\mRaCia$, defined by replacing $x_{\mbullet}$ with
${\dr\cRaLo^2}\onlImS$ and ${\dr\cRaLo}\onlImS$ in
\eqref{it:radius:re} and \eqref{it:radius:ad}, respectively. We think
of $\mRaCir$ as a reminder term, which is typically negligible
compared to $\mRaCia$ (cf. \cref{a:re:ub:rem} below).

\begin{proposition}\label{a:re:ub} For $\alpha\in(0,1)$ define
  $\oacst[2]{\alpha}:=\rSo+\rEv(5\lcst{\alpha/2}+15\lcst[2]{\alpha/2}+5)$. Then%
  \begin{equation}\label{a:re:ub:e}
    \forall \acst{}\in[\oacst{\alpha},\infty): \sup_{(\wSoS, \wEvS) \in \CwSo \times \CwEv}
    \RiT{\teFi[\cK,]{\alpha/2}}{\wrcSo,\wrcEv}{\oSoS,\acst{}
      [\mRaCir\vee\mRaCia]}\leq\alpha.
  \end{equation}
\end{proposition}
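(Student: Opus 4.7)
The plan is to bound the type I and type II errors of the max-test $\teFi[\cK,]{\alpha/2}$ separately, each by $\alpha/2$. The type I bound is immediate from a Bonferroni union bound: each single test $\teFi[\Di,]{\alpha/(2|\cK|)}$ in the collection has level $\alpha/(2|\cK|)$ by \eqref{m:re:ub:e1} of \cref{m:re:ub}, and summing these $|\cK|$ levels yields the desired $\alpha/2$.

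For the type II error I exploit that the max-test rejects as soon as any single test does, so it suffices to exhibit one dimension $\Di^\star \in \cK$ at which the single test $\teFi[\Di^\star,]{\alpha/(2|\cK|)}$ has power at least $1-\alpha/2$ uniformly over the $\rho$-separated alternative. Denoting by $\Di^a,\Di^r \in \cK$ the minimisers defining $(\mRaCia)^2$ and $(\mRaCir)^2$ in \eqref{it:radius:ad}--\eqref{it:radius:re} respectively, I set $\Di^\star := \Di^a \wedge \Di^r$. Since $\rqFPr[\Di]{\sonlImS/\wEvS^2}$ and $\mFPr[\Di]{\sonlImS/\wEvS^2}$ are nondecreasing and $\wSo[\Di]$ is nonincreasing in $\Di$, a short two-case argument on whether $\Di^a\leq\Di^r$ or $\Di^r<\Di^a$ shows that at $\Di^\star$ the three quantities $\dr\cRaLo^2\rqFPr[\Di^\star]{\sonlImS/\wEvS^2}$, $\dr\cRaLo^4\mFPr[\Di^\star]{\sonlImS/\wEvS^2}$, and $\wSo[\Di^\star]^2$ are \emph{simultaneously} bounded by $(\mRaCia)^2\vee(\mRaCir)^2$.

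Next I will apply the single-test type II analysis underlying the proof of \eqref{m:re:ub:e2} in \cref{m:re:ub} to $\teFi[\Di^\star,]{\alpha/(2|\cK|)}$ with target type II error $\alpha/2$, which yields the sufficient condition
\[
\rho^2 \geq \rSo\wSo[\Di^\star]^2 + \tfrac{5}{2}\rEv\bigl[\lcst{\alpha/(2|\cK|)}\rqFPr[\Di^\star]{\sonlImS/\wEvS^2} + \lcst[2]{\alpha/(2|\cK|)}\mFPr[\Di^\star]{\sonlImS/\wEvS^2} + (\lcst{\alpha/2}+5\lcst[2]{\alpha/2})\rqFPr[\Di^\star]{\sonlImS/\wEvS^2}\bigr].
\]
The elementary inequalities $\lcst{\alpha/(2|\cK|)}\leq\lcst{\alpha/2}+\dr\cRaLo^2$ (subadditivity of $\sqrt{\cdot}$) and $\lcst[2]{\alpha/(2|\cK|)}\leq\lcst[2]{\alpha/2}+\dr\cRaLo^4$ (additivity of $\log$) split each $\alpha/(2|\cK|)$-term into an $\alpha/2$-contribution and a $\dr\cRaLo$-contribution; combined with the simultaneous bounds from the previous paragraph and $\mFPr[\Di^\star]{\cdot}\leq\rqFPr[\Di^\star]{\cdot}$, the right hand side is at most $[\rSo+\rEv(5\lcst{\alpha/2}+15\lcst[2]{\alpha/2}+5)]\cdot[(\mRaCia)^2\vee(\mRaCir)^2]=\oacst[2]{\alpha}[(\mRaCia)^2\vee(\mRaCir)^2]$, matching the claimed constant.

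The main obstacle, and the only genuinely new input beyond \cref{m:re:ub}, is the choice of $\Di^\star$ and its justification: the adaptive radius is the maximum of two separate minima over $\cK$ whose individual optimisers need not coincide, so one must exploit monotonicity of $\rqFPr[\Di]$, $\mFPr[\Di]$, and $\wSo[\Di]$ in $\Di$ to verify that the common dimension $\Di^a\wedge\Di^r$ controls all three threshold constituents of the single-test sufficient condition simultaneously.
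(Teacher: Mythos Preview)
Your proposal is correct and follows essentially the same route as the paper: a Bonferroni bound for the type~I error, and for the type~II error the choice of the single dimension $\Di^\star=\Di^a\wedge\Di^r$ together with the splitting $\lcst{\alpha/(2|\cK|)}\leq\lcst{\alpha/2}+\dr\cRaLo^2$ and $\lcst[2]{\alpha/(2|\cK|)}\leq\lcst[2]{\alpha/2}+\dr\cRaLo^4$, which reproduces exactly the constant $\oacst[2]{\alpha}$. The only cosmetic difference is that the paper packages your two-case monotonicity argument into an appeal to \cref{re:argmin}, obtaining the equality $(\mRaCir\vee\mRaCia)^2=\rqFPr[\Di^\star]{{\dr\cRaLo^2}\stnlImS}\vee\mFPr[\Di^\star]{{\dr\cRaLo^4}\stnlImS}\vee\swSo[\Di^\star]$ in one stroke, whereas you establish directly the three needed inequalities; both are equivalent here.
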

\begin{proof}[Proof of \cref{a:re:ub}] The proof follows along the
  lines of the proof of \cref{m:re:ub} and exploits \eqref{qchi:e1}
  and \eqref{qchi:e2} in \cref{re:qchi}.  For
  $(\yObS,\xObS)\sim\FuVg[\nlImS,\nlOpS]{\SoS,\EvS}$, we have
  $Q_{\Di}:=\hqF+\qFPr{\tnlImS}\sim\FuVgQ[\tnlImS]{\mu_{\mbullet},\Di}$
  for each $\Di\in\Nz$ with $\stnlImS:=\sonlImS/\wEvS^2$ and
  $\mu_{\mbullet}:=\EvS(\SoS-\oSoS)/\wEvS$. \eqref{qchi:e1} implies
  that under the null hypothesis with
  $\dr L:=\sqrt{\log(2|\cK|/\alpha)}$ the quantile satisfies
  $\quVgQ[\tnlImS]{\nSoS,\Di}{\alpha/(2|\cK|)}\leq \qFPr{{\tnlImS}}+
  2L\rqFPr{\stnlImS}+2L^2\mFPr{\stnlImS}$ and, therefore, 
  \begin{multline}\label{a:p1} 
    \FuVg[\nlImS,\nlOpS]{\oSoS,\EvS}\big(\teFi[\cK,]{\alpha/2}=1\big)
    =\FuVg[\nlImS,\nlOpS]{\oSoS,\EvS}\big(\teS[\cK,]{\alpha/2}>0\big)
    \leq \sum_{\Di\in\cK}\FuVg[\nlImS,\nlOpS]{\oSoS,\EvS}\big(\hqF
    >2L\rqFPr{\stnlImS}+2L^2\mFPr{\stnlImS}\big)\\\leq
    \sum_{\Di\in\cK}\FuVg[\nlImS,\nlOpS]{\oSoS,\EvS}\big(Q_{\Di}
    > \quVgQ[\tnlImS]{\nSoS,\Di}{\alpha/(2|\cK|)}\big)=
    \sum_{\Di\in\cK}\frac{\alpha}{2|\cK|}=\alpha/2.
  \end{multline} 
  On the other hand, under the alternative
  $\SoS-\oSoS\in \rwcSo\cap\lp[\rho]^2$ with
  $\rho\geq \oacst{\alpha}[\mRaCir\vee\mRaCia]$%
  \begin{multline}\label{a:p2}
    \Vnormlp{\SoS-\oSoS}^2
    \geq \rSo\swSo[\mDi] +
    \rEv\frac{5}{2}\big(L\rqFPr[\mDi]{\stnlImS} +
    L^2\mFPr[\mDi]{\stnlImS}
    +\rqFPr[\mDi]{\stnlImS}(\lcst{\alpha/2}+5\lcst[2]{\alpha/2})\big),\hfill
  \end{multline}
  where we successively use 
  $(\mRaCir\vee\mRaCia)^2=\rqFPr[\aDi]{{\dr\cRaLo^2}\stnlImS}
  \vee\mFPr[\aDi]{{\dr\cRaLo^4}\stnlImS} \vee\swSo[\aDi]$ with
  $\aDi:=\argmin\nolimits_{\cK}(\mFPrS{{\dr\cRaLo^4}\,\stnlImS}\vee\swSoS)
  \wedge\argmin\nolimits_{\cK}(\rqFPrS{{\dr\cRaLo^2}\,\stnlImS}\vee\swSoS)$
  due to \cref{re:argmin},
  $\rqFPr[\aDi]{{\dr\cRaLo^2}\stnlImS}\geq\rqFPr[\aDi]{\stnlImS}$,
  $\rqFPr[\aDi]{{\dr\cRaLo^2}\stnlImS}(\lcst{\alpha/2}+1)\geq
  L\rqFPr[\aDi]{\stnlImS}$ and
  $\mFPr[\aDi]{{\dr\cRaLo^4}\stnlImS}(\lcst[2]{\alpha/2}+1)\geq
  L^2\mFPr[\aDi]{\stnlImS}$. For all $\Di\in\Nz$, $\EvS\in\rwcEv$ and
  $\SoS-\oSoS\in\rwcSo \cap \lp[\rho]^2$ it holds
  $\rEv\qFPr{\mu_{\mbullet}}\geq\Vnormlp{\SoS-\oSoS}^2-\rSo\swSo[\Di]$,
  which together with \eqref{a:p2} implies
  $\tfrac{4}{5}\qFPr[\aDi]{\mu_{\mbullet}}\geq2L\rqFPr[\aDi]{\stnlImS}
  + 2L^2\mFPr[\aDi]{\stnlImS}
  +\rqFPr[\aDi]{\stnlImS}2(\lcst{\alpha/2}+5\lcst[2]{\alpha/2})$.
  Rearranging the last inequality and using \eqref{qchi:e2} in
  \cref{re:qchi} shows that for all $\EvS\in\rwcEv$
  \begin{multline}\label{a:p3} 
    \FuVg[\nlImS,\nlOpS]{\SoS,\EvS}\big(\teFi[\cK,]{\alpha/2}=0\big)
    \leq
    \min\{\FuVg[\nlImS,\nlOpS]{\SoS,\EvS}\big(Q_{\Di}
    \leq2L\rqFPr{\stnlImS} +2L^2\mFPr{\stnlImS}
    +\qFPr{\tnlImS}\big):\Di\in\cK\}\\
    \leq
    \FuVg[\nlImS,\nlOpS]{\SoS,\EvS}\big(Q_{\aDi}
    \leq \quVgQ[\tnlImS]{\mu_{\mbullet},\aDi}{1-\alpha/2}\big)=\alpha/2.
  \end{multline}  
  Combining \eqref{a:p1} and \eqref{a:p3} we obtain the assertion
  \eqref{a:re:ub:e}, which completes the proof.
\end{proof}
\begin{remark}\label{a:re:ub:rem}
  The first term
  $ \mRaCir =\min_{\cK}(\mFPrS{{\dr\cRaLo^4}\,\sonlImS/\wEvS^2}
  \vee\swSoS)$ in the upper bound \eqref{a:re:ub:e} for the adaptive
  testing radius can always be bounded by
  $\mRaCia[{\dr\cRaLo^2}\onlImS] =
  \min_{\cK}(\rqFPrS{{\dr\cRaLo^4}\,\sonlImS/\wEvS^2}\vee\swSoS)$ due
  to the elementary inequality
  $\mFPrS{{\dr\cRaLo^4}\,\sonlImS/\wEvS^2} \leq
  \rqFPrS{{\dr\cRaLo^4}\,\sonlImS/\wEvS^2}$. Note that
  $\mRaCia[{\dr\cRaLo^2}\onlImS]$ only differs from the second term
  $\mRaCia$ in \eqref{a:re:ub:e} by an additional factor $\dr
  \cRaLo$. Hence, we can always show that $\dr \cRaLo^2$ is an
  adaptive factor. However, often this bound is too rough and the term
  $\mRaCir$ is negligible compared to $\mRaCia$, which results in an
  adaptive factor $\dr \cRaLo$. Let us give sufficient conditions for the negligibility. Consider
  $\mDii:=\mDiCia$ as in \eqref{it:radius:ad}. Then we have
  $\mRaCir\leq \cst[2]{\cK}\mRaCia$ for any $\cst[2]{\cK}>0$ with
  ${\dr\cRaLo^2}(\mDii)^{-1/2}\rqFPr[\mDii]{{\dr\cRaLo^2}\,\sonlImS/\wEvS^2}\leq
  \mFPr[\mDii]{{\dr\cRaLo^4}\,\sonlImS/\wEvS^2}\leq
  \cst[2]{\cK}\rqFPr[\mDii]{{\dr\cRaLo^2}\,\sonlImS/\wEvS^2}$, which is
  satisfied whenever
  \begin{align}\label{negligible}
    {\dr\cRaLo^2}\leq \cst{\cK} \sqrt{\mDii} \qquad \text{
    and } \qquad  \sqrt{\mDii} \mFPr[\mDii]{ \onlImS^2/\wEvS^2}
    \leq  \cst{\cK}\;\rqFPr[\mDii]{ \onlImS^2/\wEvS^2}.
  \end{align}
  Moreover, comparing $\mRaCia$ and $\mRaCiaf$ (defined as in
  \eqref{indirect:radius} by replacing $x_{\mbullet}$ with
  ${\dr\cRaLo}\,\onlImS$) it holds $\mRaCiaf\leq\mRaCia$ for any
  collection $\cK$. In the \cref{illustration:adaptive} below we
  select a suitable collection $\cK$ such that uniformly for all 
  $\wSoS\in\CwSo$ we get $\mRaCia\leq\cst{\cK}\mRaCiaf$ for some
  $\cst{\cK}\geq1$.
\end{remark}
Assuming $\mRaCir\vee\mRaCia$ is negligible compared to $\mRaCiaf$ let
us reformulate the upper bound \eqref{a:re:ub:e} in terms of noise
levels ${\dr\cRaLo[\nlImS]}\nlImS$ and
${\dr\cRaLo[\nlOpS]}\oSoS\nlOpS$, respectively.  Keeping the minimax
optimal choice $\ymDiCi\wedge\xmDiCi$ for the dimension parameter
(c.f. \cref{m:re:ub}) in mind we note that $\ymDiCi$ and $ \xmDiCi$
depend only on $\nlImS$ and $\nlOpS$, respectively. Therefore, we
eventually choose collections $\cK_{\nlImS}$ and $\cK_{\nlOpS}$
depending on $\nlImS$ respectively $\nlOpS$ only, and set
$\cK := \cK_{\nlImS} \cap \cK_{\nlOpS}$,
$\cRaLo[\nlImS] := \cRaLo[{\cK_{\nlImS}}]$ and
$\cRaLo[\nlOpS] := \cRaLo[{\cK_{\nlOpS}}]$ where
$\lv \cK \rv \leq \lv \cK_{\nlImS} \rv \wedge \lv \cK_{\nlOpS} \rv$
and hence $\cRaLo \leq \cRaLo[\nlImS] \wedge
\cRaLo[\nlOpS]$. Exploiting $2[\ymRaCia \vee \xmRaCia] \geq \mRaCiaf$,
the next result is a direct consequence of \cref{a:re:ub} and its
proof is omitted.
\begin{theorem}\label{separate:ub}  Let
	$\cK := \cK_{\nlImS} \cap \cK_{\nlOpS}$,
	$\cRaLo[\nlImS] := \cRaLo[{\cK_{\nlImS}}]$ and
	$\cRaLo[\nlOpS] := \cRaLo[{\cK_{\nlOpS}}]$. Assume there exists a 
  $\cst{\cK}\geq1$ with $\mRaCir\vee\mRaCia\leq \cst{\cK}\mRaCiaf$ for
  all $\wSoS \in \CwSo$, $\wEvS\in\CwEv$.  Then for each
  $\alpha \in (0,1)$ with
  $\oacst[2]{\alpha}:=\rSo+\rEv(10
  \lcst{\alpha/2}+30\lcst[2]{\alpha/2}+10)$ it follows
  \begin{equation*}
    \forall \acst{}\in[\oacst{\alpha},\infty):
    \sup_{(\wSoS,\wEvS) \in \CwSo\times\CwEv}
    \RiT{\teFi[\cK,]{\alpha/2}}{\wrcSo,\wrcEv}
    {\oSoS,\acst{}\cst{\cK}[\ymRaCia \vee\xmRaCia]}\leq\alpha.
  \end{equation*}
\end{theorem}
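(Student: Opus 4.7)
The strategy is to chain inequalities and reduce to \cref{a:re:ub}. The task is to verify the deterministic inequality $\mRaCiaf\leq 2[\ymRaCia\vee\xmRaCia]$ indicated in the text preceding the statement, combine it with the hypothesis $\mRaCir\vee\mRaCia\leq\cst{\cK}\mRaCiaf$, and then track the constants carefully so as to land on the exact $\oacst{\alpha}$ claimed rather than a cruder bound.

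First I would verify $\mRaCiaf\leq 2[\ymRaCia\vee\xmRaCia]$. Since $\sonlImS=\snlImS+\oSoS^2\snlOpS$, the triangle inequality for $\rqFPrS{\cdot}$ applied componentwise gives
\begin{equation*}
  \rqFPrS{\sonlImS/\wEvS^2}\;\leq\;\rqFPrS{\snlImS/\wEvS^2}+\rqFPrS{\oSoS^2\snlOpS/\wEvS^2}\;\leq\;2\bigl[\rqFPrS{\snlImS/\wEvS^2}\vee\rqFPrS{\oSoS^2\snlOpS/\wEvS^2}\bigr].
\end{equation*}
Taking the maximum with $\swSoS$ on both sides, using $\cRaLo\leq\cRaLo[\nlImS]\wedge\cRaLo[\nlOpS]$ (which follows from $\cK\subseteq\cK_{\nlImS}\cap\cK_{\nlOpS}$), and finally minimising over $\cK$ on the left versus separately over $\cK_{\nlImS}$ and $\cK_{\nlOpS}$ on the right yields the claim; the latter is permitted precisely because $\cK$ is contained in both collections.

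Combining this with the hypothesis yields $\mRaCir\vee\mRaCia\leq 2\cst{\cK}[\ymRaCia\vee\xmRaCia]$. Hence every $\SoS-\oSoS\in\wrcSo\cap\lp[\rho]^2$ with $\rho\geq\acst{}\cst{\cK}[\ymRaCia\vee\xmRaCia]$ is automatically $(\acst{}/2)[\mRaCir\vee\mRaCia]$-separated, and \cref{a:re:ub} delivers a risk bound of $\alpha$ as soon as $\acst{}/2$ exceeds the threshold constant appearing there.

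The only delicate point is landing on the stated $\oacst[2]{\alpha}=\rSo+\rEv(10\lcst{\alpha/2}+30\lcst[2]{\alpha/2}+10)$, rather than the cruder fourfold inflation that a black-box application of \cref{a:re:ub} would produce. I would resolve this by re-running its type~II argument (the display \eqref{a:p2} in its proof) directly with the substituted radius: the bias contribution $\rSo\swSo[\aDi]$ is unaffected by the factor $2$ from the first step, which only multiplies the noise-dependent $\rqFPr{\cdot}$ and $\mFPr{\cdot}$ terms. Consequently $\oacst[2]{\alpha}$ decomposes as $\rSo$ (unchanged) plus twice the noise contribution $\rEv(5\lcst{\alpha/2}+15\lcst[2]{\alpha/2}+5)$, matching the claim. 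The main obstacle is thus purely arithmetic bookkeeping—propagating which ingredient of $\mRaCir\vee\mRaCia$ is scaled by which constant—so that the $\rSo$ prefactor is not inadvertently inflated; no new probabilistic estimate is needed.
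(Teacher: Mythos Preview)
Your proposal is correct and follows exactly the route the paper sketches: it states only that the result ``is a direct consequence of \cref{a:re:ub}'' via the inequality $2[\ymRaCia\vee\xmRaCia]\geq\mRaCiaf$ and omits the proof entirely. You supply the missing details, including the triangle-inequality verification of that bound and the refined constant-tracking (re-running \eqref{a:p2} so that only the noise contribution, not the bias term $\rSo\swSo[\aDi]$, picks up the factor~$2$) needed to land on the stated $\oacst{\alpha}$ rather than the cruder black-box constant.
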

\begin{remark}\label{a:re:ub:rem:2}
  Let us briefly discuss the choice of $\cK$ in the homoscedastic case
  as in \cref{m:rem:mrt2}. Considering the signal detection task it is
  easily seen that for all $\wEvS\in\CwEv$ and $\wSoS\in\CwSo$ the
  minimax optimal dimension parameter $\ymDiCi$ as in
  \eqref{indirect:radius} is never larger than ${\nlIm^{-4}}$.
  Therefore, exploiting the natural choice $\cK=\nset{{\nlIm^{-4}}}$
  the factor $\cRaLo$ is of order $|\log\nlIm|^{1/4}$. However, in
  many cases it is sufficient to aggregate over a polynomial grid
  $\cK=\set{2^l, l\in\nset{{4|\log_2\nlIm|}}}$. Obviously, $\cRaLo$ is
  then of order $(\log|\log\nlIm|)^{1/4}$. For a goodness-of-fit task
  the upper bound for the minimax optimal dimension parameter
  $\ymDiCi\wedge\xmDiCi$ can further be improved by exploiting the
  knowledge of $\oSoS$. More precisely, since
  $\qFPr{\sonlImS/\wEvS^2}\geq \qFPr{\sonlImS} \geq \nlIm^4 \Di +
  \nlOp^4 \qFPr{\oSoS^2}$ and $\VnormInf{\wSoS} \leq 1$, any $k\in\Nz$
  such that $\nlOp^4\qFPr{\oSoS^2} \geq 1$ is an upper bound for the
  dimension parameter. For the goodness-of-fit task
  $\oSoS =\Nsuite{ j^{-\oSoP}}$ as considered in
  \cref{illustration:adaptive} below, the upper bound is of order
  $\nlOp^{-4}$, which results in the natural choice
  $\cK=\nset{{\nlIm^{-4}} }\cap \nset{ \nlOp^{-4}} = \cK_{\nlIm} \cap
  \cK_{\nlOp}$ and an adaptive factor
  $\lv \log \nlIm\rv^{1/4} \wedge \lv \log \nlOp \rv^{1/4}$. However,
  since a polynomial grid
  $\cK=\set{2^l, l\in\nset{{4\lv \log_2 \nlIm \rv}} \cap \nset{ 4 \lv
      \log_2 \nlOp \rv}}$ is again sufficient, $\cRaLo$ is of order
  $(\log|\log\nlIm|)^{1/4} \wedge (\log|\log\nlOp|)^{1/4}=
  \cRaLo[\nlIm] \wedge \cRaLo[\nlOp]$.
\end{remark}
\begin{illustration}[homoscedastic case]
  \label{illustration:adaptive}
  Considering the typical smoothness and ill-posedness assumptions of
  \cref{illustration:indirect} and a polynomial grid
  $\cK=\set{2^l, l\in\nset{{4\lv \log_2 \nlIm \rv}} \cap \nset{ 4 \lv
      \log_2 \nlOp \rv}}$ as discussed in \cref{a:re:ub:rem} it holds
  $\mRaCia\leq\cst{\cK}\mRaCiaf$ for some $\cst{\cK}\geq1$ uniformly
  for all $\wSoP\in[\wSoP_\star,\wSoP^\star]$.  Moreover, for mildly
  ill-posed models $\mRaCir$ as in
  \eqref{a:re:ub:e} is negligible compared to $\mRaCia$,
  i.e. uniformly for all $\wSoP\in[\wSoP_\star,\wSoP^\star]$ it
  holds $\mRaCir\leq\cst{\cK}\mRaCia$ for some $\cst{\cK}\geq1$, since
  the conditions \eqref{negligible} are fulfilled for
  $\cRaLo\leq\cRaLo[\nlIm] \wedge \cRaLo[\nlOp]$ with
  $\cRaLo[\nlIm] = (\log \lv \log\nlIm\rv)^{1/4}$ and
  $ \cRaLo[\nlOp] = (\log \lv \log\nlOp\rv)^{1/4}$. Furthermore, the
  constant $\cst{\cK}$ can be chosen uniformly for all sufficiently
  small noise levels.  In the severely ill-posed case $\mRaCir$,
  $\mRaCia$ and $\mRaCiaf$ are all of the same order and the adaptive
  factors have no effect on the rate. We present the resulting rate of
  testing $\ymRaCia \vee \xmRaCia$ in terms of the originals noise
  levels $\nlIm$ and $\nlOp$ for both the signal detection and the
  goodness-of-fit task $\oSoS
  =\Nsuite{ j^{-\oSoP}}$, in the table below.\\[3ex]
  \centerline{\begin{tabular}{@{}ll|ll@{}} \toprule
                \multicolumn{4}{@{}l@{}}{Order of $\ymRaCia
                \vee \xmRaCia$ \smaller for  $\teFi[\cK,]{\alpha/2}$ with $\cK=\set{2^l, l\in\nset{{4\lv \log_2 \nlIm \rv}} \cap \nset{ 4 \lv \log_2 \nlOp \rv}}$} \\
		\midrule
	 	$\wSoS$  & $\wEvS$  & $\ymRaCia $   & $\xmRaCia$ \\
		{\smaller (smoothness)}  & { \smaller (ill-posedness)}& {\smaller $\oSoS \in \lp^2$}& {\smaller$\oSoS =\Nsuite{ j^{-\oSoP}}, 4\oSoP- 4 \wEvP  < 1$} \\
		\midrule
		$\Nsuite{ j^{-\wSoP}}$ & $\Nsuite{j^{-\wEvP}}$ & $\lb  (\log \lv \log \nlIm \rv)^{\frac{1}{4}} \nlIm\rb^{\frac{4\wSoP}{4\wSoP+4\wEvP+1}}$  &$\lb (\log \lv \log \nlOp \rv)^{\frac{1}{4}}\nlOp\rb^{\frac{4\wSoP}{4\wSoP+4(\wEvP-\oSoP)+1}}$  \\
		$\Nsuite{e^{-j^{2\wSoP}}}$ &  $\Nsuite{j^{-\wEvP}}$ & $|\log \nlIm |^{\frac{4 \wEvP+1}{8\wSoP}}(\log \lv \log \nlIm \rv)^{\frac{1}{4}}\nlIm $  & $|\log \nlOp |^{\frac{4\wEvP-4 \oSoP+1}{8\wSoP}}(\log \lv \log \nlOp \rv)^{\frac{1}{4}}\nlOp  $ \\
		$\Nsuite{ j^{-\wSoP}}$ & $\Nsuite{e^{-j^{2\wEvP}}}$ &$|\log  \nlIm|^{- \frac{\wSoP}{2\wEvP}}$ & $|\log  \nlOp|^{-\frac{\wSoP}{2\wEvP}}$  \\
		\bottomrule
              \end{tabular}}\\[2ex]
            In case of super smoothness $\wSoS=\Nsuite{e^{-j^{2\wSoP}}}$ and
            mild ill-posedness  (see \cref{illustration:indirect}) the minimax optimal
            dimension parameter   is of order $|\log
            \nlIm|^{1/(2\wSoP)}$ and $|\log\nlIm|^{{1}/2\wSoP)}
            \wedge|\log\nlOp|^{1/(2\wSoP)}$ for the signal detection task  and the
            goodness-of-fit task, respectively, which suggests
            for $\wSoP\in[\wSoP_\star,
            \wSoP^\star]$ a polynomial  grid $\cK := \set{2^l,
              l\in\nsetB{{\tfrac{1}{2 \wSoP_\star}  \log_2 \lv \log \nlIm \rv}}
              \cap \nsetB{ \tfrac{1}{2 \wSoP_\star} \log_2 \lv \log \nlOp \rv}}$
            and an adaptive factor  $\cRaLo\leq\cRaLo[\nlIm] \wedge
            \cRaLo[\nlOp]$
            with   $\cRaLo[\nlIm] = (\log \log\lv \log\nlIm\rv)^{1/4}$ and
            $ \cRaLo[\nlOp] = (\log \log\lv \log\nlOp\rv)^{1/4}$. Indeed, in
            this situation there exists a $\cst{\cK}\geq1$  such that 
            $\mRaCir\vee\mRaCia\leq \cst{\cK}\mRaCiaf$ uniformly
            for all $\wSoP\in[\wSoP_\star,
            \wSoP^\star]$ and for all sufficiently small noise levels. We present the
            resulting rate of testing in terms of the
            originals noise levels $\nlIm$ and $\nlOp$ for both the signal
            detection and the goodness-of-fit task  in the table below. \\[3ex]
            \centerline{\begin{tabular}{@{}ll|ll@{}} \toprule
                          \multicolumn{4}{@{}l@{}}{Order of $\ymRaCia
                          \vee \xmRaCia$  \smaller for $\teFi[\cK,]{\alpha/2}$ with $\cK=\set{2^l,
                          l\in\nsetB{{\tfrac{1}{2 \wSoP_\star}  \log_2 \lv \log \nlIm \rv}}
                          \cap \nsetB{ \tfrac{1}{2 \wSoP_\star} \log_2 \lv \log \nlOp \rv}}$} \\
                          \midrule
                          $\wSoS$  & $\wEvS$  & $\ymRaCia $   & $\xmRaCia$ \\
                          {\smaller (smoothness)}  & { \smaller (ill-posedness)}& {\smaller $\oSoS \in \lp^2$}& {\smaller$\oSoS =\Nsuite{ j^{-\oSoP}}, 4\oSoP- 4 \wEvP  < 1$} \\
                          \midrule
                          $\Nsuite{e^{-j^{2\wSoP}}}$ &  $\Nsuite{j^{-\wEvP}}$ & $|\log \nlIm |^{\frac{4 \wEvP+1}{8\wSoP}}(\log\log \lv \log \nlIm \rv)^{\frac{1}{4}}\nlIm $  & $|\log \nlOp |^{\frac{4\wEvP-4 \oSoP+1}{8\wSoP}}(\log\log \lv \log \nlOp \rv)^{\frac{1}{4}}\nlOp  $ \\
                          \bottomrule
                        \end{tabular}}%
\end{illustration}
\begin{remark}In a direct sequence space model (i.e.
  $\wEvS \equiv 1$) adaptation to the radius $\rSo \in \IR_+$ of the
  ellipsoid $\rwcSo$ without a loss is not possible (c.f.
  \cite{Baraud2002} Section 6.3). However, due to \cref{separate:ub}
  adaptation to a bounded interval $[\rSo_\star, \rSo^\star]$ without
  further loss is possible even in an indirect sequence space
  model. Precisely, for
  $\oacst[2]{\alpha} := \rSo^\star
  +\rEv(10\lcst{\alpha/2}+30\lcst[2]{\alpha/2}+10)$ a similar result
  to \cref{separate:ub} holds where the supremum is additionally taken
  over all $\rSo \in [\rSo_\star, \rSo^\star]$.
\end{remark}
The next proposition states conditions under which a deterioration of
the minimax testing radius
$\ymRaCi[{\dr\cRaLo[\nlImS]}\,\nlImS] \vee
\xmRaCi[{\dr\cRaLo[\nlOpS]}\,\oSoS\nlOpS]$ as in
\eqref{indirect:radius} by factors $\cRaLo[\nlImS]$ and
$\cRaLo[\nlOpS]$ are unavoidable for adaptation over $\CrwcSo$. Recall
that the parameter $\wEvS$ in $\rwcEv$ is still assumed to be known.%
\begin{proposition}\label{prop:condition_adaptive} Let
  $\alpha\in(0,1)$, $\cRaLo[\nlImS], \cRaLo[\nlOpS] \geq1$,
  $\wEvS\in\CwEv$.  Assume a collection of $N \in \Nz$ sequences
  $\lcb \wSoS^j\in \CwSo, j \in \nset{N} \rcb \subset \CwSo$ where
  $\mDiwSo[j]:= \yomDiCiaj \wedge \xomDiCiaj$ and
  $\mRawSo[j]:= \yomRaCiaj \vee \xomRaCiaj $ satisfies
  \begin{inparaenum}[i]\renewcommand{\theenumi}{\dr{\normalfont\rmfamily{\dgrau\bfseries(C\arabic{enumi})}}} 
  \item\label{pro:ca:c2}$\mDiwSo[j]\leq\mDiwSo[l]$ and
  \item\label{pro:ca:c3}$\lb \cRaLo[\nlImS] \vee \cRaLo[\nlOpS]
    \rb^4\smRawSo[j]{2}\leq \smRawSo[l]{2}$ for all
    $j,l \in \nset{N}$, $j < l$.  Moreover, suppose there exists a
    $c_{\alpha}>0$ such that
  \item\label{pro:ca:c1}
    $\exp(c_\alpha \lb \cRaLo[\nlImS] \vee \cRaLo[\nlOpS] \rb^4)\leq
    N\alpha^2$.
  \end{inparaenum} 
  If $\eta\in(0,1]$ satisfies
  $\eta \leq
  \inf_{j\in\nset{N}}\smRawSo[j]{-2}(\rqFPr[{\mDiwSo[j]}]{\cRaLo[\nlImS]^2
    \snlImS/\wEvS^2}\vee \rqFPr[{\mDiwSo[j]}]{\cRaLo[\nlOpS]^2
    \snlOpS \oSoS^2/\wEvS^2} \wedge(\wSo[{\mDiwSo[j]}]^j)^2)$, then
  with
  $\uacst[2]{\alpha} := \eta(r \wedge \sqrt{\log(1 + \alpha^2)}\wedge
  \sqrt{c_\alpha})$
  \begin{equation}\label{pro:ca:e1}
    \forall A \in [0,\underline{A}_{\alpha}]:
    \inf_{\teF} \sup_{(\wSoS, \wEvS) \in \CwSo \times \lcb \wEvS \rcb}
    \RiT{\teF}{\rwcSo,\rwcEv}{\oSoS,A
      [\ymRaCi[{\dr\cRaLo[\nlImS]}\,\nlImS]
      \vee  \xmRaCi[{\dr\cRaLo[\nlOpS]}\,\oSoS\nlOpS] }\geq 1-\alpha, 
  \end{equation}
  i.e.  $\cRaLo[\nlImS]$ respectively $\cRaLo[\nlOpS]$ are lower
  bounds for the minimal adaptive factors over $\CrwcSo$.
\end{proposition}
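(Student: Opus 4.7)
The plan is to extend the mixture-reduction used in the proof of \cref{lowerbound} to a \emph{double} mixture: an outer average over the $N$ candidate smoothness sequences $\wSoS^j$, and, for each $j$, an inner average over the vertices of a sign hypercube supported on $\nset{\mDiwSo[j]}$. For every $j \in \nset{N}$ I would mimic the construction in \cref{lowerbound} and build $\tSoS^j=\Nsuite{\tSo[l]^j}$, supported on $\nset{\mDiwSo[j]}$, with coordinates proportional to $\sonlIm[l]/\wEv[l]^2$ and rescaled so that $\Vnormlp{\tSoS^j}^2=\zeta\eta\smRawSo[j]{2}$, where $\zeta:=r\wedge\sqrt{\log(1+\alpha^2)}\wedge\sqrt{c_\alpha}$. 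Writing $\tSo[l]^{j,\tau}:=\tau_l\tSo[l]^j$ for $\tau\in\{\pm1\}^{\mDiwSo[j]}$, the hypothesis on $\eta$ together with the defining optimality of $\mDiwSo[j]$ ensures, exactly as in the proof of \cref{lowerbound}, that $\tSoS^{j,\tau}\in\rwcSo$ for the smoothness $\wSoS^j$, and that $\oSoS+\tSoS^{j,\tau}$ is separated from $\oSoS$ by at least $\uacst{\alpha}[\ymRaCi[{\dr\cRaLo[\nlImS]}\,\nlImS]\vee\xmRaCi[{\dr\cRaLo[\nlOpS]}\,\oSoS\nlOpS]]$.

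Next, after the reparametrization $\tyObS:=\yObS-\oSoS\xObS$, I would consider the two mixture measures
\begin{equation*}
\tFuVg{0}:=\FuVg[\onlImS]{\nSoS}, \qquad
\tFuVg{1}:=\frac{1}{N}\sum_{j=1}^{N}\frac{1}{2^{\mDiwSo[j]}}\sum_{\tau\in\{\pm1\}^{\mDiwSo[j]}}\FuVg[\onlImS]{\wEvS\tSoS^{j,\tau}},
\end{equation*}
which are supported in the null and in the union over $j$ of the $\rwcSo$-alternatives associated with $\wSoS^j$. The standard bound stating that a supremum of risks over a family dominates the average, combined with the inequality $\mathrm{TV}\leq\sqrt{\chi^2/2}$ (Lemma 2.5 and (2.7) in \cite{Tsybakov2009} as used in \cref{lowerbound}), reduces the proof to showing $\chi^2(\tFuVg{1},\tFuVg{0})\leq 2\alpha^2$.

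This chi-square is a double sum over $(j,j')\in\nset{N}^2$ whose generic entry, by the Gaussian cross-integral formula and after averaging in the signs, equals $\prod_{l}\cosh\!\bigl(\wEv[l]^2\tSo[l]^j\tSo[l]^{j'}/\sonlIm[l]\bigr)$, with the convention $\tSo[l]^j=0$ for $l>\mDiwSo[j]$. Using $\cosh x\leq e^{x^2/2}$ together with the nesting of supports guaranteed by (C2), every diagonal term $j=j'$ is bounded, in analogy with \cref{lemma:adaptive_lower_bound}, by $\exp\!\bigl(c_\alpha(\cRaLo[\nlImS]\vee\cRaLo[\nlOpS])^4\bigr)$, while the off-diagonal entries $j\neq j'$ are bounded via Cauchy--Schwarz by the geometric mean of the corresponding diagonals; the geometric growth (C3) of $\smRawSo[j]{2}$ ensures that this bound is not destroyed. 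Condition (C1), which forces $\exp\!\bigl(c_\alpha(\cRaLo[\nlImS]\vee\cRaLo[\nlOpS])^4\bigr)\leq N\alpha^2$, then provides precisely the slack needed to offset the prefactor $1/N^2$, giving $\chi^2(\tFuVg{1},\tFuVg{0})\leq 2\alpha^2$ and hence \eqref{pro:ca:e1}.

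The principal difficulty is the control of the off-diagonal contributions in the double mixture: one must verify that the nesting property (C2) together with the geometric gap (C3) between consecutive rates $\smRawSo[j]{2}$ prevents the cross-terms from overwhelming the factor $1/N$ produced by the outer average. Hypotheses (C1)--(C3) are engineered precisely to enforce this balance, while the calibration of $\zeta$ guarantees that the sign-level bound from the inner averaging is compatible with the extra $\cRaLo$-inflation carried by the adaptive radii.
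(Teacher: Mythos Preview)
Your overall architecture matches the paper's: the double mixture, the reparametrisation to $\tyObS$, the reduction via $\chi^2$, and the use of \cref{lemma:adaptive_lower_bound} are all exactly the route taken there. The construction of the candidates $\tSoS^j$ and the verification that they lie in the right alternative class also go through as you describe (the paper uses coordinates proportional to the $\cRaLo$-inflated squared noise $\cRaLo[\nlImS]^2\snlIm[l]+\cRaLo[\nlOpS]^2\oSo[l]^2\snlOp[l]$ rather than $\sonlIm[l]$, but with your normalisation this difference is inessential).

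The genuine gap is in your treatment of the off-diagonal terms. Bounding the cross term for $j<j'$ by the geometric mean of the two diagonals via Cauchy--Schwarz is too weak: by construction each diagonal contribution $\qFPr[{\mDiwSo[j]}]{\wEvS^2(\tSoS^j)^2/\sonlImS}$ is of order $\zeta^2(\cRaLo[\nlImS]\vee\cRaLo[\nlOpS])^4$, \emph{independently of $j$}. Hence the geometric mean is of the same size, and after exponentiating and averaging over all $N^2$ pairs you are left with $\exp\bigl(\zeta^2(\cRaLo[\nlImS]\vee\cRaLo[\nlOpS])^4\bigr)-1$, which by \ref{pro:ca:c1} is only $\leq N\alpha^2-1$ and not $\leq 2\alpha^2$. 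Condition \ref{pro:ca:c3} does not rescue this bound, because it concerns the ratio of the radii $\smRawSo[j]{2}/\smRawSo[l]{2}$, not of the diagonals, which are all of comparable size.

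What is actually needed (and what the paper does) is a \emph{direct} estimate of the cross term that exposes this ratio. For $j<l$ one writes out $\qFPr[{\mDiwSo[j]}]{\wEvS^2\tSoS^{j}\tSoS^{l}/\sonlImS}$, inserts the explicit formula for $\tSo[\cdot]^j$ and $\tSo[\cdot]^l$, and observes that the normalising denominators and the $\eta$-condition produce the factor $\smRawSo[j]{2}/\smRawSo[l]{2}$, yielding
\[
\qFPr[{\mDiwSo[j]}]{\wEvS^2\tSoS^{j}\tSoS^{l}/\sonlImS}\leq 2\zeta^2(\cRaLo[\nlImS]\vee\cRaLo[\nlOpS])^4\,\frac{\smRawSo[j]{2}}{\smRawSo[l]{2}}\leq 2\zeta^2
\]
by \ref{pro:ca:c3}. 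Thus every off-diagonal term is bounded by $\exp(\zeta^2)\leq 1+\alpha^2$, a constant, while only the $N$ diagonal terms carry the large factor; the final bound $\tfrac{1}{N}\exp(\zeta^2(\cRaLo[\nlImS]\vee\cRaLo[\nlOpS])^4)+\exp(\zeta^2)-1\leq 2\alpha^2$ then follows from \ref{pro:ca:c1} and the choice of $\zeta$.
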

\begin{proof}[Proof of \cref{prop:condition_adaptive}] The proof
  relies on the reduction scheme and notation used in the proof of
  \cref{lowerbound}. Let us define
  ${\onlImS}^{2,\cRaLo[]} := \cRaLo[\nlImS]^2 \snlImS +
  \cRaLo[\nlOpS]^2 \oSoS^2 \snlOpS$ For each $j\in\nset{N}$ we
  introduce $\tSoS^{j} \in\lp^2$ with $\tSo[\Di]^{j} =0$ for
  $\Di > \mDiwSo[j]$,
  \begin{equation*}
    \tSo[\Di]^{j} := \frac{\mRawSo[j]\sqrt{ \zeta \eta} }{
      \rqFPr[{\mDiwSo[j]}]{\cRaLo[\nlImS]^2\snlImS/\wEvS^2}
      \vee\rqFPr[{\mDiwSo[j]}]{\cRaLo[\nlOpS]^2\snlOpS\oSoS^2/\wEvS^2}}
    \frac{{\onlIm[\Di]}^{2,\cRaLo[]}}{\wEv[\Di]^2},
    \quad \Di \in \nset{\mDiwSo[j]}\quad\text{and}\quad\zeta: = r
    \wedge \sqrt{\log(1+\alpha^2)}\wedge \sqrt{c_{\alpha}},
  \end{equation*}
  where
  $\tSoS^{j} \in \wrcSo[\wSoS^j]\cap
  \lp[\underline{A}_\alpha{\mRawSo[j]}]^2$ follows from arguing line by line as in
  the proof of \cref{lowerbound}.  Considering the uniform mixture
  measure over the vertices of a hypercube
  $\FuVg{1,j} := {2^{-\mDiwSo[j]}} \sum_{\tau \in
    \set{\pm1}^{\mDiwSo[j]}} \FuVg[\nlImS,\nlOpS]{\oSoS+\tSoS^{j\tau},
    \wEvS}$, the uniform mixture measure
  $\FuVg{1}:= \frac{1}{N} \sum_{j=1}^N\FuVg{1,j} $ and
  $\FuVg{0} := \FuVg[\nlImS,\nlOpS]{\oSoS, \wEvS}$ are supported on
  the alternative and the null hypothesis, respectively. The joint
  distribution $\tFuVg{0}$ and $\tFuVg{1}$ of the reparametrised
  observation $(\tyObS:=\yObS-\oSoS\xObS,\xObS)$ given $\FuVg{0}$ and
  $\FuVg{1}$, respectively, still satisfies
  $\text{TV}(\FuVg{1}, \FuVg{0})=\text{TV}(\tFuVg{1}, \tFuVg{0})$ and,
  hence, from \eqref{reduction} it follows
  \begin{equation}\label{reduction:2}
    \inf_{\teF} \sup_{(\wSoS, \wEvS) \in \CwSo \times \lcb \wEvS \rcb}
    \RiT{\teF}{\rwcSo,\rwcEv}{\oSoS,A[\ymRaCi[{\dr\cRaLo[\nlImS]}\,\nlImS]
      \vee  \xmRaCi[{\dr\cRaLo[\nlOpS]}\,\oSoS\nlOpS]] }
    \geq 1 - \sqrt{\frac{\chi^2(\tFuVg{1},\tFuVg{0})}{2}}.
  \end{equation}
  Arguing as in the proof of \cref{lowerbound} and applying
  \cref{lemma:adaptive_lower_bound} in the appendix, we obtain
  \begin{align*}
    \chi^2(\tFuVg{1},\tFuVg{0})  \leq N^{-2}\sum_{j\in\nset{N}}\sum_{l\in\nset{N}}
    \exp \lb \tfrac{1}{2} \qFPr[{\mDiwSo[j]\wedge
    \mDiwSo[l]}]{\wEvS^2\tSoS^{j}\tSoS^{l}/\sonlImS}\rb- 1.
  \end{align*}
  Exploiting successively \ref{pro:ca:c2} and the definition of $\eta$,
  for $ j\leq l$ it holds
  $\qFPr[{\mDiwSo[j]\wedge\mDiwSo[l]}]{\wEvS^2\tSoS^{j}\tSoS^{l}/\sonlImS}
  =\qFPr[{\mDiwSo[j]}]{\wEvS^2\tSoS^{j}\tSoS^{l}/\sonlImS} \leq
  \frac{2 \smRawSo[j]{2} \smRawSo[l]{2} \zeta^2\eta^2 \lb
    \cRaLo[\nlImS] \vee \cRaLo[\nlOpS] \rb^4}{
    \qFPr[{\mDiwSo[l]}]{\cRaLo[\nlImS]^2\snlImS/\wEvS^2} \vee
    \qFPr[{\mDiwSo[l]}]{\cRaLo[\nlOpS]^2\snlOpS\oSoS^2/\wEvS^2}} \leq
  2 \zeta^2\lb \cRaLo[\nlImS] \vee \cRaLo[\nlOpS] \rb^4
  \frac{\smRawSo[j]{2}}{\smRawSo[l]{2}}$, and, hence,
  $\chi^2(\tFuVg{1},\tFuVg{0}) \leq \tfrac{1}{N} \exp \lb
  \zeta^2\lb \cRaLo[\nlImS] \vee \cRaLo[\nlOpS] \rb^4 \rb+
  \tfrac{N(N-1)}{N^2}\exp \lb \zeta^2\rb- 1$ due to
  \ref{pro:ca:c3}. Combining the last bound, the definition of $\zeta$
  and \ref{pro:ca:c1} implies
  $\chi^2(\tFuVg{1},\tFuVg{0}) \leq 2\alpha^2$, which together
  with \eqref{reduction:2} shows \eqref{pro:ca:e1} and completes the
  proof.
\end{proof}
\begin{remark}\label{rem:condition_adaptive}
  Let us briefly discuss the conditions \ref{pro:ca:c2} -
  \ref{pro:ca:c1} in \cref{prop:condition_adaptive}.  Under
  \ref{pro:ca:c2} and \ref{pro:ca:c3} the class $\CwSo$ is rich enough
  to make adaptation unavoidable, i.e. it contains enough
  distinguishable elements $\wSoS$ resulting in significantly
  different radii.  \ref{pro:ca:c1} is a bound for the maximal size of
  an unavoidable adaptation factor. Lastly, the condition on $\eta$ is
  similar to the balancing condition \eqref{assump1} (see the remark
  below \cref{lowerbound}) in the nonadaptive case, but now needs to
  hold uniformly for all elements $\wSoS\in\CwSo$.
\end{remark}
Next, we demonstrate how to use \cref{prop:condition_adaptive} in the
homoscedastic case, when $\CwSo$ is nontrivial with respect to
polynomial decay.
\begin{theorem} \label{unavoidable:lower} In the homoscedastic case
  $\nlImS = \Nsuite[j]{\nlIm}$ and $\nlOpS = \Nsuite[j]{\nlOp}$ with
  $\cRaLo[\nlImS]^4 = \log \lv \log \nlIm \rv$ and
  $\cRaLo[\nlOpS]^4 = \log\lv \log \nlOp \rv$, let
  $ \wEvS:= \Nsuite{j^{-\wEvP}} $, $\wEvP>0$, and
  $\CwSo = \lcb\Nsuite[j]{ j^{-2\wSoP}}, \wSoP \in [\wSoP_\star,
  \wSoP^\star] \rcb$ for $\wSoP_\star < \wSoP^\star$. Assume for all
  $ \wSoS \in \CwSo$ either
  \begin{inparaenum}[i]\renewcommand{\theenumi}{\dr{\normalfont\rmfamily{\dgrau\bfseries(A\arabic{enumi})}}} 	
  \item \label{eps_larger}$\ymRaCia \geq \xmRaCia $ or
  \item \label{sigma_larger}$\ymRaCia \leq \xmRaCia $.
  \end{inparaenum}
  For $\alpha \in (0,1)$ set
  $\uacst[2]{\alpha} := \eta(r \wedge \sqrt{\log(1 + \alpha^2)}\wedge
  1/2)$ with $\eta$ as in \cref{prop:condition_adaptive}.
  \begin{Liste}[]
  \item[\mylabel{oo:e}{\dgrau\upshape\bfseries{A1}}]
    There exists ${\nlIm}_{\alpha}\in(0,1)$
    such that for all
    $(\nlIm, \nlOp) \in [0, {\nlIm}_\alpha] \times [0,1)$ satisfying
    \ref{eps_larger} 
    \begin{equation*}
      \forall A \in [0,\underline{A}_{\alpha}]:
      \inf_{\teF} \sup_{(\wSoS, \wEvS) \in \CwSo \times \lcb \wEvS \rcb} 
      \RiT{\teF}{\rwcSo,\rwcEv}{\oSoS,A\ymRaCia  }\geq 1-\alpha. 
    \end{equation*} 
  \item[\mylabel{oo:s}{\dgrau\upshape\bfseries{A2}}] 
    Let $ \oSoP< \tfrac{1}{4}+  \wEvP $. There is ${\nlOp}_{\alpha} \in (0,1)$
    such that for all $(\nlIm, \nlOp) \in [0, 1) \times
    [0,{\nlOp}_\alpha]$ satisfying \ref{sigma_larger} 
    \begin{equation*}
      \forall A \in [0,\underline{A}_{\alpha}]:
      \inf_{\teF} \sup_{(\wSoS, \wEvS) \in \CwSo \times \lcb \wEvS \rcb}
      \RiT{\teF}{\rwcSo,\rwcEv}{\oSoS,A\xmRaCia }\geq 1-\alpha. 
    \end{equation*} 
  \end{Liste}
\end{theorem}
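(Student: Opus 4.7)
The plan is to invoke \cref{prop:condition_adaptive} for a suitable finite family of Sobolev exponents $\wSoP_1 > \wSoP_2 > \ldots > \wSoP_N$ in $[\wSoP_\star, \wSoP^\star]$, producing the corresponding sequences $\wSoS^j := \Nsuite[i]{i^{-2\wSoP_j}}$. I describe \ref{oo:e} in detail; part \ref{oo:s} is entirely symmetric after swapping the roles of the two noise levels, and the hypothesis $\oSoP - \wEvP < 1/4$ is what ensures $\xomRaCiaj$ remains genuinely nonparametric so that the same grid-based reasoning goes through. To avoid inflating the max in \ref{pro:ca:c1} and \ref{pro:ca:c3} when $\nlOp$ is very small, I would apply \cref{prop:condition_adaptive} with $\cRaLo[\nlOp] := 1$ rather than $(\log|\log\nlOp|)^{1/4}$; this is admissible because under \ref{eps_larger} one has $\xmRaCi[\oSoS\nlOp]\leq \xmRaCia\leq\ymRaCia$, so the proposition still produces a lower bound on the target radius $A\,\ymRaCia$ claimed in \ref{oo:e}.

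Explicit asymptotics come first. For the polynomial/mildly ill-posed setting with $\wSoS^j$ and $\wEvS=\Nsuite[i]{i^{-\wEvP}}$, standard geometric-series estimates applied to the argmin problem in \eqref{indirect:radius} yield
\begin{equation*}
    \yomDiCiaj \asymp (\cRaLo[\nlIm]\nlIm)^{-4/(4\wSoP_j+4\wEvP+1)}
    \quad\text{and}\quad
    (\yomRaCiaj)^2 \asymp (\cRaLo[\nlIm]\nlIm)^{8\wSoP_j/(4\wSoP_j+4\wEvP+1)},
\end{equation*}
with implicit constants depending only on $\wEvP,\wSoP_\star,\wSoP^\star$ by compactness of the parameter range. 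Under \ref{eps_larger} these agree with $\mDiwSo[j]$ and $\mRawSo[j]$.

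Next, set $\wSoP_j := \wSoP^\star - (j-1)\delta$, $j \in \nset{N}$, with $\delta > 0$ and $N \in \Nz$ to be chosen. Condition \ref{pro:ca:c2} is automatic since $\wSoP \mapsto \yomDiCiaj$ is strictly decreasing. Taking logarithms in the displayed radius asymptotics turns \ref{pro:ca:c3} into a linear constraint $\delta \geq c\,\log|\log\nlIm|/|\log\nlIm|$ for a constant $c=c(\wSoP_\star,\wSoP^\star,\wEvP)>0$, while \ref{pro:ca:c1} with $c_\alpha = 1/4$ (dictated by $\sqrt{c_\alpha}=1/2$ in the definition of $\uacst{\alpha}$) reads $N \geq \alpha^{-2}|\log\nlIm|^{1/4}$. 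Since $\alpha^{-2}|\log\nlIm|^{1/4}$ grows strictly more slowly than $|\log\nlIm|/\log|\log\nlIm|$, there exists $\nlIm_\alpha \in (0,1)$ such that for every $\nlIm \in (0,\nlIm_\alpha]$ both conditions can be met with $(N-1)\delta \leq \wSoP^\star - \wSoP_\star$.

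It remains to verify the $\eta$-bound. At the balanced dimension $\mDiwSo[j]=\yomDiCiaj$, the first variance $\rqFPr[\mDiwSo[j]]{\cRaLo[\nlIm]^2\snlImS/\wEvS^2}$ and the squared bias $(\wSo[\mDiwSo[j]]^j)^2$ are of the same order by the defining balance, while under \ref{eps_larger} the $\nlOp$-variance is of at most the same order; consequently the bracketed quantity in the $\eta$-hypothesis of \cref{prop:condition_adaptive} is comparable to $(\mRawSo[j])^2$ uniformly in $j \in \nset{N}$ and in the noise levels, which furnishes a positive $\eta_0$ independent of $\nlIm,\nlOp$. Applying \cref{prop:condition_adaptive} then yields \ref{oo:e}. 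The principal technical obstacle is controlling all implicit constants uniformly in the continuous parameter $\wSoP_j \in [\wSoP_\star,\wSoP^\star]$; this is handled by the compactness of the interval together with the continuous dependence of the underlying geometric-series estimates on $\wSoP_j$.
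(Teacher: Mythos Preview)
Your proposal is correct and follows essentially the same strategy as the paper: both apply \cref{prop:condition_adaptive} to a finite grid of Sobolev exponents, set $\cRaLo[\nlOpS]=1$ under \ref{eps_larger}, use the explicit polynomial asymptotics for $\mDiwSo[j]$ and $\mRawSo[j]$, and verify \ref{pro:ca:c2}--\ref{pro:ca:c1} for $\nlIm$ small enough; the paper parametrises the grid by equal spacing in the exponent $e(\wSoP)=4\wSoP/(4\wSoP+4\wEvP+1)$ rather than in $\wSoP$ itself, but since $e$ is a smooth bijection on $[\wSoP_\star,\wSoP^\star]$ the two parameterisations are equivalent up to uniform constants. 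Your bound $\delta\gtrsim \log|\log\nlIm|/|\log\nlIm|$ for \ref{pro:ca:c3} is slightly looser than necessary (the sharp requirement is $\delta\gtrsim \log\log|\log\nlIm|/|\log\nlIm|$, since $4\log\cRaLo[\nlIm]=\log\log|\log\nlIm|$), but this does not affect the argument because it is still compatible with \ref{pro:ca:c1}.
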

\begin{remark} Let us briefly comment on the assumptions and results
  of \cref{unavoidable:lower}. Considering mildly ill-posed models as
  in \cref{illustration:adaptive} the adaptive factors
  $\cRaLo[\nlImS] = (\log \lv \log \nlIm \rv)^{1/4}$ and
  $\cRaLo[\nlOpS] = (\log \lv \log \nlOp \rv)^{1/4}$ visible in the
  resulting rates are minimal adaptive factors due to
  \cref{unavoidable:lower}. We distinguish two cases \ref{eps_larger}
  and \ref{sigma_larger} insuring, respectively, that either all rates
  in $\cRaLo[\nlImS]\nlIm$ or all rates in $\cRaLo[\nlOpS]\oSoS\nlOp$
  are dominant, and hence we exclude mixed situations which we are not
  interested in here. Note that, \cref{unavoidable:lower} provides
  intrinsically asymptotic results, since for each $\alpha \in (0,1)$
  the noise level have to be sufficiently small. Moreover, for any
  $\nlOp \asymp \nlIm^c $ with $c \in \IR_+$, the factors
  $\cRaLo[\nlIm]$ and $\cRaLo[\nlOp]$ are of the same order anyway,
  and hence asymptotically only the cases \ref{eps_larger} and
  \ref{sigma_larger} appear. The additional restriction
  $ \oSoP - \wEvP < 1/4$ allows us to apply
  \cref{prop:condition_adaptive}. In case $\oSoP - \wEvP \geq 1/4$ the
  minimax rate $\xmRaCi$ does not depend on the smoothness parameter
  $\wSoP$ (\cref{illustration:indirect}), and hence, \ref{pro:ca:c1}
  in \cref{prop:condition_adaptive} is violated.  In this situation,
  however, is $\xmRaCi$ (almost) parametric, i.e., \ref{eps_larger}
  will typically govern the behaviour of the minimax rate. Finally,
  \cref{unavoidable:lower} covers only combinations of ordinary
  smoothness and mildly ill-posedness. For ordinary smoothness and
  severely ill-posedness, the optimal dimension parameter does not
  depend on the smoothness parameter, compare
  \cref{illustration:indirect}, hence, as usual our testing procedure
  is automatically adaptive to $\CrwcSo$, which is also reflected in
  the table in \cref{illustration:adaptive}.  The remaining case of
  super smoothness and mildly ill-posedness is considered separately in
  \cref{unavoidable:lower:exp} below.
\end{remark}
\begin{proof}[Proof of \cref{unavoidable:lower}] 
  We only prove \ref{oo:e}, the arguments for \ref{oo:s} are similar
  (simply replace $\wEvP$ by $\wEvP - \oSoP$) and thus omitted.  We
  apply \cref{prop:condition_adaptive}. Let $ \wSoS \in \CwSo$, due to
  \ref{eps_larger} the rate is given by
  $\ymRaCia \vee \xmRaCia = \ymRaCia$, which implies in turn
  $\ymDiCia \wedge \xmDiCia = \ymDiCia$ due to \cref{re:argmin}. For
  $\wSoS = \Nsuite[j]{ j^{-2\wSoP}}$ and
  $\wEvS = \Nsuite[j]{ j^{-\wEvP}}$ setting
  $e(\wSoP):= \tfrac{4 \wSoP}{4 \wSoP + 4 \wEvP + 1}$ we have
  $\cst[-1]{} \leq \ymRaCia / \lb \nlIm \cRaLo[\nlImS] \rb^{e(\wSoP)}
  \leq \cst{}$ and
  $\cst[-1]{} \leq \ymDiCia / \lb \nlIm \cRaLo[\nlImS]
  \rb^{e(\wSoP)/\wSoP} \leq\cst{}$ for some constant $\cst{} > 0$.
  Let $e_\star := e(\wSoP_\star)$, $e^\star := e(\wSoP^\star)$ and
  $\lcb e(\wSoP_l):=e^\star - (l-1) \Delta : l \in \nset{N}\rcb$,
  where $\Delta := \frac{e^\star - e_\star}{N}$ and
  $N:= \frac{e^\star - e_\star}{4} \frac{\lv \log(\cRaLo[\nlImS]
    \nlIm))\rv}{\log\cRaLo[\nlImS]}$.  The collection of $N$ sequences
  is now given by
  $\lcb (j^{-2 \wSoP_l})_{j \in \IN} : l \in \nset{N} \rcb$. Under
  \ref{eps_larger} it remains to check \ref{pro:ca:c2} -
  \ref{pro:ca:c1} for
  $\cRaLo[\nlImS] = (\log \lv \log \nlIm \rv)^{1/4}$ (setting
  $\cRaLo[\nlOpS] = 1$). Since by construction
  $\sup_{j<l} \cRaLo[\nlImS]^2 \tfrac{\mRawSo[j]}{\mRawSo[l]} \leq
  \cRaLo[\nlImS]^2 \cst[2]{} \lb \cRaLo[\nlImS] \nlIm\rb^{\Delta} \to
  0$ and $\sup_{j<l} \tfrac{\mDiwSo[j]}{\mDiwSo[l]} \to 0$ as
  $\nlIm \to 0$, \ref{pro:ca:c2} and \ref{pro:ca:c3} hold for all
  $\nlIm$ small enough. Finally \ref{pro:ca:c1} follows from
  $c_{\alpha} \cRaLo[\nlImS]^2 - \log(N) \leq 2 \log(\alpha)$ for
  $\nlIm$ small enough, since
  $\frac{1}{2} \cRaLo[\nlImS]^2 - \log(N) \to -\infty$ as
  $\nlIm \to 0$, which completes the proof.
\end{proof}
In the homoscedastic super smooth case (see
\cref{illustration:adaptive}) when $\CwSo$ is nontrivial with respect
to an exponential decay, which is obviously more restrictive than a
polynomial decay considered in \cref{unavoidable:lower}, the testing
radius of the indirect-$\max$-test $\teFi[\cK,]{\alpha}$ in
\eqref{de:it:ad} with polynomial grid
$\cK=\set{2^l, l\in\nsetB{{\tfrac{1}{2 \wSoP_\star} \log_2 \lv \log
      \nlIm \rv}} \cap \nsetB{ \tfrac{1}{2 \wSoP_\star} \log_2 \lv
    \log \nlOp \rv}}$ features a deterioration compared to the minimax
rate by factors $\cRaLo[\nlIm] = (\log \log\lv \log\nlIm\rv)^{1/4}$
and $ \cRaLo[\nlOp] = (\log \log\lv \log\nlOp\rv)^{1/4}$ only.
Applying \cref{prop:condition_adaptive} we show these are minimal
adaptive factors in this more restrictive situation.%
\begin{theorem}\label{unavoidable:lower:exp}
  In the homoscedastic case with
  $\cRaLo[\nlImS]^4 := \log\log \lv \log \nlIm \rv$ and
  $\cRaLo[\nlOpS]^4 := \log\log\lv \log \nlOp \rv$, let
  $ \wEvS:= \Nsuite{j^{-\wEvP}} $, $\wEvP>0$, and
  $\CwSo = \lcb\Nsuite[j]{ e^{-j^{2\wSoP}}}, \wSoP \in [\wSoP_\star,
  \wSoP^\star] \rcb$ for $\wSoP_\star < \wSoP^\star$. Consider
  \ref{eps_larger} or \ref{sigma_larger}, and $\uacst[2]{\alpha}$ for
  $\alpha \in (0,1)$ as in \cref{unavoidable:lower}.
  \begin{Liste}[]
  \item[\mylabel{so:e}{\dgrau\upshape\bfseries{(A1)}}] There exists
    ${\nlIm}_{\alpha}\in(0,1)$ such that for all
    $(\nlIm, \nlOp) \in [0, {\nlIm}_\alpha] \times [0,1)$ satisfying
    \ref{eps_larger} 
    \begin{equation*}
      \forall A \in [0,\underline{A}_{\alpha}]:
      \inf_{\teF}  \sup_{(\wSoS, \wEvS) \in \CwSo \times \lcb \wEvS \rcb} 
      \RiT{\teF}{\rwcSo,\rwcEv}{\oSoS,A\omRaCia[{\dr \cRaLo[\nlImS]}
        \nlImS]  }
      \geq 1-\alpha. 
    \end{equation*} 
  \item[\mylabel{so:s}{\dgrau\upshape\bfseries{(A2)}}] Let $ \oSoP< \tfrac{1}{4}+  \wEvP $. There is ${\nlOp}_{\alpha} \in (0,1)$
    such that for all $(\nlIm, \nlOp) \in [0, 1) \times
    [0,{\nlOp}_\alpha]$ satisfying \ref{sigma_larger}  \linebreak
    \begin{equation*}\label{unavoidable:sigma:exp}
      \forall A \in [0,\underline{A}_{\alpha}]:
      \inf_{\teF}  \sup_{(\wSoS, \wEvS) \in \CwSo\times \lcb \wEvS \rcb} 
      \RiT{\teF}{\rwcSo,\rwcEv}{\oSoS,A\omRaCia[{\dr \cRaLo[\nlOpS]}
        \nlOpS]  }
      \geq 1-\alpha. 
    \end{equation*} 
  \end{Liste}
\end{theorem}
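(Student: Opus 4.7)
The strategy follows the proof of \cref{unavoidable:lower}: we apply \cref{prop:condition_adaptive} with an appropriate collection of super-smooth sequences $\{\wSoS^l=\Nsuite{e^{-j^{2\wSoP_l}}}:l\in\nset{N}\}\subset\CwSo$, and as in that proof it suffices to establish \ref{so:e}, since \ref{so:s} is obtained by the substitution $\wEvP\leftsquigarrow\wEvP-\oSoP$ (the restriction $\oSoP-\wEvP<1/4$ guarantees that after the substitution the rate $\xmRaCi$ still genuinely depends on $\wSoP$, so \ref{pro:ca:c3} can be used). Under \ref{eps_larger} the rate reduces to a function of $\nlIm$ only, with deterioration factor $\cRaLo[\nlIm]^4=\log\log|\log\nlIm|$; this is the single quantity to calibrate.

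The calibration is as follows. Using $\wSoS^l=\Nsuite{e^{-j^{2\wSoP_l}}}$ and $\wEvS=\Nsuite{j^{-\wEvP}}$, one obtains (cf. \cref{illustration:adaptive})
$\mDiwSo[l]\asymp|\log\nlIm|^{1/(2\wSoP_l)}$ and $\mRawSo[l]\asymp\nlIm\,|\log\nlIm|^{(4\wEvP+1)/(8\wSoP_l)}$.
Hence for $j<l$ (with $\wSoP_j>\wSoP_l$) the ratio $\mRawSo[l]^2/\mRawSo[j]^2$ is of order $|\log\nlIm|^{(4\wEvP+1)(1/\wSoP_l-1/\wSoP_j)/4}$, which is controlled by the difference of the reciprocals. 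We therefore choose the $\wSoP_l$ so that the values $1/\wSoP_l$ are equispaced in $[1/\wSoP^\star,1/\wSoP_\star]$ with step size
$\Delta := \frac{4\,\log\log|\log\nlIm|}{(4\wEvP+1)\log|\log\nlIm|}$,
which yields a collection of cardinality
$N\asymp\log|\log\nlIm|/\log\log|\log\nlIm|$.

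With this choice the conditions of \cref{prop:condition_adaptive} are verified routinely. Condition \ref{pro:ca:c2} ($\mDiwSo[j]\leq\mDiwSo[l]$) holds because $\wSoP_j>\wSoP_l$ makes $|\log\nlIm|^{1/(2\wSoP)}$ increasing in $l$; \ref{pro:ca:c3} holds by the very definition of $\Delta$; and \ref{pro:ca:c1}, i.e. $\exp(c_\alpha\cRaLo[\nlIm]^4)=(\log|\log\nlIm|)^{c_\alpha}\leq N\alpha^2$, reduces to $(\log|\log\nlIm|)^{1-c_\alpha}/\log\log|\log\nlIm|\to\infty$, which holds once $\nlIm$ is sufficiently small provided $c_\alpha\leq 1/4$ (matching the value in \cref{unavoidable:lower}).

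The main obstacle, and the step that differs qualitatively from the ordinary-smooth case, is verifying that the balancing parameter $\eta$ in \cref{prop:condition_adaptive} stays bounded away from $0$ uniformly over $l\in\nset{N}$ and over small noise levels. Because $(\wSo[{\mDiwSo[l]}]^l)^2=e^{-2\mDiwSo[l]^{2\wSoP_l}}$ decays exponentially in $\mDiwSo[l]^{2\wSoP_l}$, small multiplicative perturbations of $\mDiwSo[l]$ change this bias term drastically, so I would take care to define $\mDiwSo[l]$ as the exact integer minimiser produced by \eqref{it:radius:ad} (with $x_\mbullet=\cRaLo[\nlIm]\nlImS$) rather than an order-of-magnitude substitute, and then check by direct calculation that both the variance term $\rqFPr[{\mDiwSo[l]}]{\cRaLo[\nlIm]^2\snlImS/\wEvS^2}$ and the bias term $(\wSo[{\mDiwSo[l]}]^l)^2$ are of the same order up to an $l$-independent multiplicative constant. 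Granting this balance, \cref{prop:condition_adaptive} yields \ref{so:e}; \ref{so:s} then follows by the substitution argument mentioned above together with the observation that the direct term $\xmRaCia$ behaves exactly like $\ymRaCia$ with $\wEvP$ replaced by $\wEvP-\oSoP$.
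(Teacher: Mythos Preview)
Your proposal follows essentially the same route as the paper: both proofs invoke \cref{prop:condition_adaptive} with a finite collection of super-smooth sequences whose exponents are equispaced in $e(\wSoP)=\tfrac{4\wEvP+1}{4\wSoP}$ (your equispacing in $1/\wSoP$ is the same thing up to a constant), and both reduce \ref{so:s} to \ref{so:e} via the substitution $\wEvP\leadsto\wEvP-\oSoP$. The only quantitative difference is the calibration of the grid: the paper takes $\Delta$ so that $|\log\nlIm|^{\Delta}\asymp\cRaLo[\nlIm]^{4}$, yielding $N\asymp\log|\log\nlIm|/\log\log\log|\log\nlIm|$, whereas your choice gives the smaller $N\asymp\log|\log\nlIm|/\log\log|\log\nlIm|$; both satisfy \ref{pro:ca:c2}--\ref{pro:ca:c1} with $c_\alpha=1/2$, so either works. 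Your explicit caution about the uniform lower bound on $\eta$ (the balance between $\rqFPr[{\mDiwSo[l]}]{\cRaLo[\nlIm]^2\snlImS/\wEvS^2}$ and $(\wSo[{\mDiwSo[l]}]^l)^2$) is a point the paper does not spell out here either; it is absorbed into the constant $\uacst{\alpha}$ via the statement, so your treatment is at least as careful as the paper's.
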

\begin{proof}[Proof of \cref{unavoidable:lower:exp}] 
  We only prove \ref{so:e}, the arguments for \ref{so:s} are similar
  (simply replace $\wEvP$ by $\wEvP - \oSoP$) and thus
  omitted. We apply \cref{prop:condition_adaptive} similarly to the proof of \cref{unavoidable:lower}.  Due to
  \ref{eps_larger} for $\wSoS = \Nsuite[j]{ e^{-j^{2\wSoP}}}$ and
  $\wEvS = \Nsuite[j]{ j^{-\wEvP}}$ setting
  $e(\wSoP):= \tfrac{4 \wEvP+1}{4 \wSoP }$ we have
  $\cst[-1]{} \leq \nlIm \cRaLo[\nlImS]\lb\log\nlIm\rb^{e(\wSoP)}/
  \ymRaCia \leq \cst{}$ and
  $\cst[-1]{} \leq \lb\log\nlIm\rb^{e(\wSoP)/(4\wEvP+1)} / \ymDiCia
  \leq\cst{}$ for some constant $\cst{} > 0$.  Let
  $e_\star := e(\wSoP^\star)$, $e^\star := e(\wSoP_\star)$ and
  $\lcb e(\wSoP_l):=e_\star + (l-1) \Delta : l \in \nset{N}\rcb$,
  where $\Delta := \frac{e^\star - e_\star}{N}$ and
  $N:= \frac{e^\star - e_\star}{4} \frac{\lv \log(
    \nlIm)\rv}{\log(\cRaLo[\nlIm])}$.  The collection of $N$ sequences
  is now given by
  $\lcb (e^{-j^{2 \wSoP_l}})_{j \in \IN} : l \in \nset{N} \rcb$.
  Under \ref{eps_larger} it remains to check \ref{pro:ca:c2} -
  \ref{pro:ca:c1} for $\cRaLo[\nlImS]^4 = \log\log \lv \log \nlIm \rv$
  (setting $\cRaLo[\nlOpS] = 1$). Since by construction
  $\sup_{j<l} \cRaLo[\nlImS]^2 \tfrac{\mRawSo[j]}{\mRawSo[l]} \leq
  \cRaLo[\nlImS]^2\cst[2]{} \lv \log \nlIm\rv^{\Delta} \to 0$ and
  $\sup_{j<l} \tfrac{\mDiwSo[j]}{\mDiwSo[l]} \to 0$ as $\nlIm \to 0$,
  \ref{pro:ca:c2} and \ref{pro:ca:c3} hold for $\nlIm$ small
  enough. Lastly \ref{pro:ca:c1} follows from
  $c_{\alpha} \cRaLo[\nlImS]^2 - \log(N) \leq 2 \log(\alpha)$ for
  $\nlIm$ small enough, since
  $\frac{1}{2} \cRaLo[\nlImS]^2 - \log(N) \to -\infty$ as
  $\nlIm \to 0$, \ref{pro:ca:c1}, which completes the proof.
\end{proof}
The indirect-$\max$-test $\teFi[\cK,]{\alpha}$ in \eqref{de:it:ad} is
eventually adaptive to $\CrwcSo$ when $\CwSo$ is nontrivial with
respect to polynomial or exponential decay (see
\cref{illustration:adaptive}). However, the indirect test in
\eqref{de:it} makes explicite use of $\wEvS$ and thus a prior
knowledge of the class $\rwcEv$ is required, which the direct test in \eqref{de:dt}
avoids. Therefore, next, we consider its adaptation both to a family of
smoothness classes $\CrwcSo$ and a family of ill-posedness classes
$\CrwcEv$.
\subsection{Adaptation to smoothness and ill-posedness}\label{adaptation:direct}
\paragraph{Direct testing procedure.} In this section we consider the
adaptation of the direct test in \eqref{de:dt} to families of
alternatives $\CrwcSo$ and $\CrwcEv$. Given $\alpha\in(0,1)$ and a finite
collection $\cK\in\Nz$ define the $\max$-test statistic with
Bonferroni levels and the corresponding $\max$-test%
\begin{equation}\label{de:dt:ad}
  \teSd[\cK,]{\alpha}:=\max_{\Di\in\cK}(\teSd[\Di,]{\alpha/|\cK|})
  \quad\text{ and }\quad
  \teFd[\cK,]{\alpha}:=\Ind{\set{\teSd[\cK,]{\alpha}>0}},
\end{equation}
which is a level-$\alpha$-test due to \eqref{m:di:mrt:e1} in
\cref{m:di:mrt}.  Its testing radius faces a deterioration compared to
the optimal direct testing radius derived in \cref{m:di:mrt} due to
the Bonferroni aggregation which we formalise next. Analogously to 
\eqref{direct:radius}, for each $x_{\mbullet} \in \IR^\IN$ let us define the
minimum over the collection $\cK$ 
\begin{equation}\label{dt:radius:re}
  (\mRaCdr[x_{\mbullet}])^2:=\min\nolimits_{\cK}(\wEvS^{-2}\mFPrS{x_{\mbullet}^2}\vee\swSoS)
\end{equation}
and the minimum and minimiser over $\cK$, respectively,
\begin{align}\label{dt:radius:ad}
  (\mRaCda[x_{\mbullet}])^2:=
  \min\nolimits_{\cK}(\wEvS^{-2}\rqFPrS{x_{\mbullet}^2}\vee\swSoS)
  \;\text{ and} \;  \mDiCda[x_{\mbullet}]:= \argmin\nolimits_{\cK}(\wEvS^{-2}\rqFPrS{x_{\mbullet}^2}\vee\swSoS).
\end{align}
We first present an upper bound in terms of the reparametrised noise
level $\sonlImS=\snlImS+\oSoS^2\snlOpS$ and the adaptive factor
$\dr\cRaLo:=(1\vee\log|\cK|)^{1/4}$.  The upper bound consists of the
two terms $\mRaCdr$ and $\mRaCda$, defined by replacing $x_{\mbullet}$ with
${\dr\cRaLo^2}\onlImS$ and ${\dr\cRaLo}\onlImS$ in
\eqref{it:radius:re} and \eqref{it:radius:ad},
respectively.
\begin{proposition}\label{a:re:ub:d} For $\alpha\in(0,1)$ define
  $\oacst{\alpha}:=\rSo+\rEv(5\lcst{\alpha/2}+15\lcst[2]{\alpha/2}+5)$. Then
  \begin{equation}\label{a:re:ub:d:e}
    \forall \acst{}\in[\oacst{\alpha},\infty):\ \sup_{(\wSoS, \wEvS) \in \CwSo \times \CwEv}
    \RiT{\teFd[\cK,]{\alpha/2}}{\wrcSo,\wrcEv}{\oSo,\acst{}
      [\mRaCdr\vee\mRaCda]}\leq\alpha.
  \end{equation}
\end{proposition}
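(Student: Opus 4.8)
The plan is to follow the proof of \cref{a:re:ub} line by line, performing exactly the substitutions that turn the indirect analysis of \cref{m:re:ub} into the direct one of \cref{m:in:ub}. Concretely, if $(\yObS,\xObS)\sim\FuVg[\nlImS,\nlOpS]{\SoS,\EvS}$ then $Q_{\Di}:=\tqF+\qFPr{\onlImS}\sim\FuVgQ[\onlImS]{\mu_{\mbullet},\Di}$ for every $\Di\in\Nz$, now with variance sequence $\onlImS$ instead of $\onlImS/\wEvS$ and non-centrality $\mu_{\mbullet}:=\EvS(\SoS-\oSoS)$ instead of $\EvS(\SoS-\oSoS)/\wEvS$, using the notation of \cref{re:qchi}. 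The argument then splits into the control of the type I and the type II error, uniformly over $(\wSoS,\wEvS)\in\CwSo\times\CwEv$, over $\EvS\in\rwcEv$ and over $\SoS-\oSoS\in\rwcSo$.

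For the type I error I would apply the upper tail bound \eqref{qchi:e1} of \cref{re:qchi} at level $\alpha/(2|\cK|)$: under the null $Q_{\Di}\sim\FuVgQ[\onlImS]{\nSoS,\Di}$, so with $\dr L:=\sqrt{\log(2|\cK|/\alpha)}$ one obtains $\quVgQ[\onlImS]{\nSoS,\Di}{\alpha/(2|\cK|)}\leq\qFPr{\onlImS}+2L\rqFPr{\sonlImS}+2L^2\mFPr{\sonlImS}$ for each $\Di\in\cK$, and a Bonferroni union bound over $\cK$, exactly as in \eqref{a:p1}, yields $\sup\set{\FuVg[\nlImS,\nlOpS]{\oSoS,\EvS}\big(\teFd[\cK,]{\alpha/2}=1\big):\EvS\in\rwcEv}\leq\alpha/2$.

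For the type II error I would fix an alternative with $\SoS-\oSoS\in\rwcSo\cap\lp[\rho]^2$ and $\rho\geq\oacst{\alpha}[\mRaCdr\vee\mRaCda]$, and set $\aDi:=\argmin\nolimits_{\cK}(\wEvS^{-2}\mFPrS{{\dr\cRaLo^4}\sonlImS}\vee\swSoS)\wedge\argmin\nolimits_{\cK}(\wEvS^{-2}\rqFPrS{{\dr\cRaLo^2}\sonlImS}\vee\swSoS)$, so that \cref{re:argmin} gives $(\mRaCdr\vee\mRaCda)^2=\wEv[\aDi]^{-2}\rqFPr[\aDi]{{\dr\cRaLo^2}\sonlImS}\vee\wEv[\aDi]^{-2}\mFPr[\aDi]{{\dr\cRaLo^4}\sonlImS}\vee\swSo[\aDi]$. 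Bounding the three parts as in \cref{a:re:ub} — using $\dr\cRaLo^{2}(\lcst{\alpha/2}+1)\geq L$, $\dr\cRaLo^{4}(\lcst[2]{\alpha/2}+1)\geq L^{2}$ and the elementary inequality $2[\rqFPr[\aDi]{\snlImS}\vee\rqFPr[\aDi]{\snlOpS\oSoS^2}]\geq\rqFPr[\aDi]{\sonlImS}\geq\mFPr[\aDi]{\sonlImS}$ — together with $\rEv\wEv[\aDi]^{-2}\qFPr[\aDi]{\mu_{\mbullet}}\geq\Vnormlp{\SoS-\oSoS}^2-\rSo\swSo[\aDi]$ for $\SoS-\oSoS\in\rwcSo$ (the bound already used in the proof of \cref{m:in:ub}), I expect to reach $\tfrac45\qFPr[\aDi]{\mu_{\mbullet}}\geq2L\rqFPr[\aDi]{\sonlImS}+2L^2\mFPr[\aDi]{\sonlImS}+\rqFPr[\aDi]{\sonlImS}2(\lcst{\alpha/2}+5\lcst[2]{\alpha/2})$. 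Rearranging and invoking the lower quantile bound \eqref{qchi:e2} of \cref{re:qchi} then gives $2L\rqFPr[\aDi]{\sonlImS}+2L^2\mFPr[\aDi]{\sonlImS}+\qFPr[\aDi]{\onlImS}\leq\quVgQ[\onlImS]{\mu_{\mbullet},\aDi}{1-\alpha/2}$, whence, controlling the $\max$-test by the test at dimension $\aDi$, $\FuVg[\nlImS,\nlOpS]{\SoS,\EvS}\big(\teFd[\cK,]{\alpha/2}=0\big)\leq\FuVg[\nlImS,\nlOpS]{\SoS,\EvS}\big(Q_{\aDi}\leq\quVgQ[\onlImS]{\mu_{\mbullet},\aDi}{1-\alpha/2}\big)=\alpha/2$ for every $\EvS\in\rwcEv$. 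Adding the two error bounds yields \eqref{a:re:ub:d:e}.

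The step I expect to be the most delicate is bookkeeping rather than conceptual: one has to check that the constants hidden in $\oacst{\alpha}$ are precisely large enough to absorb simultaneously the Bonferroni inflation of the quantiles — the passage from $\lcst{\alpha/2},\lcst[2]{\alpha/2}$ to $L,L^2$, which is exactly where the adaptive factor $\dr\cRaLo=(1\vee\log|\cK|)^{1/4}$ is produced — and the bias/variance trade-off at the data-driven dimension $\aDi$, all while keeping the extra factor $\wEv[\aDi]^{-2}$ in the correct place. The only genuinely new feature relative to \cref{a:re:ub} is that $\tqF$ targets $\qFPr{\EvS(\SoS-\oSoS)}$ rather than $\qFPr{\tfrac{\EvS}{\wEvS}(\SoS-\oSoS)}$, so the variance sequence in the $\chi^2$ representation is $\onlImS$ and not $\onlImS/\wEvS$; this is precisely the modification already carried out when passing from \cref{m:re:ub} to \cref{m:in:ub}, and it is fully compatible with the Bonferroni aggregation, so no additional argument is needed.
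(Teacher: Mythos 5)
Your proof is correct and takes exactly the route the paper intends: the paper's own justification is simply that the argument follows the proof of \cref{a:re:ub} with \cref{m:in:ub} substituted for \cref{m:re:ub}, and you spell out precisely those substitutions, keeping the variance sequence $\onlImS$, the non-centrality $\EvS(\SoS-\oSoS)$, and the extra $\wEv[\aDi]^{-2}$ in the radius in their correct places. One tiny superfluity: the inequality $2[\rqFPr[\aDi]{\snlImS}\vee\rqFPr[\aDi]{\snlOpS\oSoS^2}]\geq\rqFPr[\aDi]{\sonlImS}\geq\mFPr[\aDi]{\sonlImS}$ that you quote belongs to the non-adaptive argument of \cref{m:in:ub}, where the radius contains only an $\rqFPrS$-term and the noise levels are separated; here the adaptive radius is written in the reparametrised noise $\onlImS$ and already contains both $\mFPrS$ and $\rqFPrS$ pieces, so the only inequalities you actually need are $\dr\cRaLo^{2}(\lcst{\alpha/2}+1)\geq L$ and $\dr\cRaLo^{4}(\lcst[2]{\alpha/2}+1)\geq L^{2}$, together with \cref{re:argmin} identifying $(\mRaCdr\vee\mRaCda)^2$ at $\aDi$.
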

\begin{proof}[Proof of \cref{a:re:ub:d}]The proof follows along the
  lines of the proof of \cref{a:re:ub} using \cref{m:in:ub} rather
  than \cref{m:re:ub}, and we omit the details.
\end{proof}
\begin{remark}
  The upper bound \eqref{a:re:ub:d:e} in \cref{a:re:ub:d} consist of
  two terms similar to the upper bound \eqref{a:re:ub:e} in
  \cref{a:re:ub}. In contrast to $\mRaCir$ in \eqref{a:re:ub:e} the
  term $\mRaCdr$ in \eqref{a:re:ub:d:e} is not negligible compared to
  $\mRaCda$ if the effective noise level $\oSoS\snlOpS$ determines the
  rate. Similar to \cref{a:re:ub:rem} consider
  $\mDid:=\mDiCda[{\dr\cRaLo^2}\,\nlImS]$ as in \eqref{dt:radius:ad}
  replacing $x_{\mbullet}$ by ${\dr\cRaLo^2}\nlImS$. Then we have
  $\mRaCdr[{\dr\cRaLo^2}\,\nlImS]\leq
  \cst[2]{\cK}\mRaCda[{\dr\cRaLo}\,\nlImS]$ for any $\cst[2]{\cK}>0$
  with
  ${\dr\cRaLo^2}(\mDid)^{-1/2}\rqFPr[\mDid]{{\dr\cRaLo^2}\,\snlImS}\leq
  \mFPr[\mDid]{{\dr\cRaLo^4}\,\snlImS}\leq
  \cst[2]{\cK}\rqFPr[\mDid]{{\dr\cRaLo^2}\,\snlImS}$, which is
  satisfied whenever
  \begin{align}
    \label{dt:negligible}
    {\dr\cRaLo^2}\leq \cst{\cK} \sqrt{\mDid} \qquad \text{
    and } \qquad \sqrt{\mDid} \mFPr[\mDid]{ \snlImS}
    \leq  \cst{\cK}\;\rqFPr[\mDid]{ \snlImS}.
  \end{align}
  However, we have
  $\mRaCdr[{\dr\cRaLo^2}\,\oSoS\nlOpS]\leq\mRaCda[{\dr\cRaLo^2}\,\oSoS\nlOpS]$
  where in the homoscedastic case
  $\mRaCdr[{\dr\cRaLo^2}\,\oSoS\nlOpS]$ and
  $\mRaCda[{\dr\cRaLo^2}\,\oSoS\nlOpS]$ are of the same order.
  Finally, below we select the collection $\cK$ such that uniformly
  for all $\wSoS \in \CwSo $ and $\wEvS \in \CwEv$ we get
  $\mRaCda[{\dr\cRaLo^2}\,\oSoS\nlOpS]\vee\mRaCda[{\dr\cRaLo}\,\nlImS]\leq
  \cst{\cK}[\mRaCd[{\dr\cRaLo^2}\,\oSoS\nlOpS]\vee\mRaCd[{\dr\cRaLo}\,\nlImS]]$
  for some $\cst{\cK}\geq1$.
\end{remark}
Assuming $\mRaCdr\vee\mRaCda$ is negligible compared to
$\ymRaCda\vee\xmRaCda$ we reformulate the upper bound
\eqref{a:re:ub:e} in terms of the noise levels
${\dr\cRaLo[\nlImS]}\nlImS$ and ${\dr\cRaLo[\nlOpS]^2}\oSoS\nlOpS$,
respectively. Similar to \cref{separate:ub} we choose collections
$\cK_{\nlImS}$ and $\cK_{\nlOpS}$ depending on $\nlImS$ respectively
$\nlOpS$ only, and set $\cK := \cK_{\nlImS} \cap \cK_{\nlOpS}$,
$\cRaLo[\nlImS] := \cRaLo[{\cK_{\nlImS}}]$ and
$\cRaLo[\nlOpS] := \cRaLo[{\cK_{\nlOpS}}]$ where
$\lv \cK \rv \leq \lv \cK_{\nlImS} \rv \wedge \lv \cK_{\nlOpS} \rv$
and hence $\cRaLo \leq \cRaLo[\nlImS] \wedge \cRaLo[\nlOpS]$. The next
result is an immediate consequence of \cref{a:re:ub:d} and its proof
is omitted.
\begin{theorem}\label{separate:ub:d}  Let
	$\cK := \cK_{\nlImS} \cap \cK_{\nlOpS}$,
	$\cRaLo[\nlImS] := \cRaLo[{\cK_{\nlImS}}]$ and
	$\cRaLo[\nlOpS] := \cRaLo[{\cK_{\nlOpS}}]$. Assume there exists a $\cst{\cK}\geq1$ with
  $[\mRaCdr\vee\mRaCda]\leq \cst{\cK}[\ymRaCda\vee\xmRaCda]$ for all
  $\wSoS \in \CwSo$, $\wEvS\in\CwEv$.  Then for each
  $\alpha \in (0,1)$ with
  $\oacst[2]{\alpha}:=\rSo+\rEv(5
  \lcst{\alpha/2}+15\lcst[2]{\alpha/2}+5)$
  \begin{equation*}
    \forall \acst{}\in[\oacst{\alpha},\infty): \sup_{(\wSoS,\wEvS) \in \CwSo\times\CwEv}
    \RiT{\teFd[\cK,]{\alpha/2}}{\wrcSo,\wrcEv}{\oSoS,\acst{}\cst{\cK}
      [\ymRaCda \vee\xmRaCda]}\leq\alpha.
\end{equation*}  
\end{theorem}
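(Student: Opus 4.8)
The plan is to deduce \cref{separate:ub:d} directly from \cref{a:re:ub:d}, in the same spirit as \cref{separate:ub} was obtained from \cref{a:re:ub}. The present statement is in fact even more immediate, since its hypothesis $[\mRaCdr\vee\mRaCda]\leq \cst{\cK}[\ymRaCda\vee\xmRaCda]$ is already phrased in terms of the split radius appearing in the conclusion, so no auxiliary comparison between combined and split radii is needed and the argument is purely one of monotonicity.

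First I would record that for any test $\varphi$ the maximal risk $\rho\mapsto\RiT{\varphi}{\wrcSo,\wrcEv}{\oSoS,\rho}$ is non-increasing: its type~I part $\sup\set{\FuVg[\nlImS,\nlOpS]{\oSoS,\EvS}(\varphi=1):\EvS\in\wrcEv}$ does not involve $\rho$, while the alternative $\lcb\SoS:\SoS-\oSoS\in\lp[\rho]^2\cap\wrcSo\rcb$ shrinks as $\rho$ increases (because $\lp[\rho]^2$ does), so the supremum defining the type~II part can only decrease. Next, fix $\alpha\in(0,1)$, let $\oacst{\alpha}$ be the constant of \cref{a:re:ub:d}, and take $\acst{}\in[\oacst{\alpha},\infty)$ and $(\wSoS,\wEvS)\in\CwSo\times\CwEv$. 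Since $\cst{\cK}\geq1$ and $\acst{}\geq0$, the hypothesis gives $\acst{}\cst{\cK}[\ymRaCda\vee\xmRaCda]\geq\acst{}[\mRaCdr\vee\mRaCda]$, so monotonicity followed by \cref{a:re:ub:d} yields $\RiT{\teFd[\cK,]{\alpha/2}}{\wrcSo,\wrcEv}{\oSoS,\acst{}\cst{\cK}[\ymRaCda\vee\xmRaCda]}\leq\RiT{\teFd[\cK,]{\alpha/2}}{\wrcSo,\wrcEv}{\oSoS,\acst{}[\mRaCdr\vee\mRaCda]}\leq\alpha$; taking the supremum over $(\wSoS,\wEvS)\in\CwSo\times\CwEv$ gives the claim. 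Note that $\teFd[\cK,]{\alpha/2}$ is literally the $\max$-test from \eqref{de:dt:ad}, unchanged between the two statements, so no further constants have to be tracked.

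I do not expect a genuine obstacle here: the mathematical content of the theorem is concentrated entirely in its hypothesis, i.e.\ in the comparison of the ``combined'' adaptive radius (built from $\cK=\cK_{\nlImS}\cap\cK_{\nlOpS}$, the adaptive factor $\dr\cRaLo=(1\vee\log\lv\cK\rv)^{1/4}$ and the reparametrised noise level $\onlImS$) with the split radius in the original and effective noise levels $\nlImS$ and $\oSoS\nlOpS$. If one wished to make that comparison self-contained rather than assumed, the tools would be the elementary inequalities $\dr\cRaLo\leq\cRaLo[\nlImS]\wedge\cRaLo[\nlOpS]$ (from $\lv\cK\rv\leq\lv\cK_{\nlImS}\rv\wedge\lv\cK_{\nlOpS}\rv$) and $\rqFPrS{\sonlImS}\leq 2\big(\rqFPrS{\snlImS}\vee\rqFPrS{\oSoS^2\snlOpS}\big)$, together with the monotonicity of the sequences involved and the corresponding negligibility condition on $\mRaCdr$; but this verification is precisely what is carried out for concrete collections $\cK$ and concrete smoothness/ill-posedness classes in the illustrations, so for the theorem as stated the proof genuinely reduces to \cref{a:re:ub:d} plus the monotonicity of the risk.
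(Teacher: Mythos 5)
Your proposal is correct and follows essentially the same route as the paper, which omits the proof of \cref{separate:ub:d} as an immediate consequence of \cref{a:re:ub:d}; your observation that the hypothesis here is already phrased in terms of the split radius $\ymRaCda\vee\xmRaCda$ — so that, unlike the passage from \cref{a:re:ub} to \cref{separate:ub}, no further comparison and hence no enlargement of the threshold constant is required — captures exactly why the step is immediate.
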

\begin{remark}Comparing the upper bounds in \cref{separate:ub:d,separate:ub} for the testing radius of the
  indirect-  and direct-$\max$-test, respectively, there is an additional adaptive factor
  $\cRaLo[\nlOpS]$ in the
  term  $\xmRaCda$. However, $\xmRaCd$ is generally much
  larger than the minimax optimal $\xmRaCi$  and the additional
  deterioration by an factor $\cRaLo[\nlOpS]$ is negligible compared
  with it. On the other hand,  the
  term  $\ymRaCda$ is typically of the optimal order $\ymRaCia$.
\end{remark}
\begin{illustration}[homoscedastic case]\label{illustration:adaptive:d} 
  Considering the typical smoothness and ill-posedness assumptions of
  \cref{illustration:direct} and a polynomial grid
  $\cK=\set{2^l, l\in\nset{{4\lv \log_2 \nlIm \rv}} \cap \nset{ 4 \lv
      \log_2 \nlOp \rv}}$ similar to \cref{illustration:adaptive} it
  holds
  $\mRaCda[{\dr\cRaLo[\nlOpS]^2}\,\oSoS\nlOpS]\vee
  \mRaCda[{\dr\cRaLo[\nlImS]}\,\nlImS]\leq
  \cst{\cK}[\mRaCd[{\dr\cRaLo[\nlOpS]^2}\,\oSoS\nlOpS]\vee
  \mRaCd[{\dr\cRaLo[\nlImS]}\,\nlImS]]$
  for some $\cst{\cK}\geq1$ uniformly for all
  $\wSoP\in[\wSoP_\star,\wSoP^\star]$ and
  $\wEvP\in[\wEvP_\star,\wEvP^\star]$.  Moreover, for mildly ill-posed
  models as considered below we have
  $\mRaCdr[{\dr\cRaLo^2}\,\nlImS]\leq
  \cst{\cK}\mRaCda[{\dr\cRaLo[\nlImS]}\,\nlImS]$ for some
  $\cst{\cK}\geq1$ uniformly for all
  $\wSoP\in[\wSoP_\star,\wSoP^\star]$ and
  $\wEvP\in[\wEvP_\star,\wEvP^\star]$, since the conditions
  \eqref{dt:negligible} are fulfilled for
  $\cRaLo[\nlIm] = (\log \lv \log\nlIm\rv)^{1/4}\geq\cRaLo$. On the
  other hand we use
  $\mRaCdr[{\dr\cRaLo^2}\,\nlOpS]\leq\mRaCda[{\dr\cRaLo[\nlOp]^2}\,\nlOpS]$
  with $\cRaLo[\nlOp] = (\log \lv \log\nlOp\rv)^{1/4}\geq\cRaLo$.
  Furthermore, the constant $\cst{\cK}$ can be chosen uniformly for
  all sufficiently small noise levels.  In the severely ill-posed case
  $\mRaCdr$, $\mRaCda$ and $\mRaCd[{\dr\cRaLo}\,\onlImS]$ are all of
  the same order and the adaptive factors have no effect on the
  rate. We present the resulting rate of testing
  $\ymRaCda \vee \xmRaCda$ in terms of the originals noise levels
  $\nlIm$ and $\nlOp$ for both the signal detection and the
  goodness-of-fit task $\oSoS =\Nsuite{ j^{-\oSoP}}$ in the table
  below. We shall stress that the order $\ymRaCda$ and $\ymRaCia$ (see
  \cref{illustration:adaptive}) of the direct and indirect max-test
  coincide and, hence the direct test features a deterioration by a
  minimal adaptive factor in $\cRaLo[\nlImS]$ only. However, the order of
  $\xmRaCda$ is much slower than the optimal order $\xmRaCia$.\\[3ex] 
 \centerline{
   \begin{tabular}{@{}ll|ll@{}}      
     \toprule
     \multicolumn{4}{@{}l@{}}{Order of 
     $\ymRaCda \vee \xmRaCda$ \smaller for
     $\teFd[\cK,]{\alpha/2}$ with $\cK=\set{2^l, l\in\nset{{4\lv
     \log_2 \nlIm \rv}} \cap \nset{ 4 \lv \log_2 \nlOp \rv}}$}  \\  
     \midrule
     $\wSoS$  & $\wEvS$  & $\ymRaCda$   & $\xmRaCda$ \\
     {\smaller (smoothness)}  & { \smaller (ill-posedness)}& {\smaller $\oSoS \in \lp^2$}&  {\smaller$\oSoS =\Nsuite{ j^{-\oSoP}}$} \\
     \midrule
     $\Nsuite{ j^{-\wSoP}}$ & $\Nsuite{j^{-\wEvP}}$ & $\lb
                                                      (\log
                                                      \lv
                                                      \log
                                                      \nlIm
                                                      \rv)^{\frac{1}{4}}
                                                      \nlIm
                                                      \rb^{\frac{4\wSoP}{4\wSoP+4\wEvP+1}}$
                                        &  $\lb (\log \lv
                                          \log \nlOp
                                          \rv)^{\frac{1}{2}}
                                          \nlOp  \rb^{\frac{\wSoP}{\wSoP+\wEvP}}$  \\
     $\Nsuite{e^{-j^{2\wSoP}}}$ &  $\Nsuite{j^{-\wEvP}}$ &
                                                       $|\log \nlIm
                                                           |^{\frac{4
                                                           \wEvP+1}{8\wSoP}}
                                                           (\log \lv
                                                           \log \nlIm
                                                           \rv)^{\frac{1}{4}}
                                                           \nlIm$
                                        &  $|\log \nlOp |^{\frac{\wEvP}{2\wSoP}} \lb\log \lv \log \nlOp \rv\rb^{\frac{1}{2}} \nlOp $ \\
     $\Nsuite{ j^{-\wSoP}}$
              & $\Nsuite{e^{-j^{2\wEvP}}}$
                         &$|\log  \nlIm|^{-\frac{\wSoP}{2\wEvP}}$
                                        &  $|\log  \nlOp|^{-\frac{\wSoP}{2\wEvP}}$  \\
     \bottomrule
   \end{tabular}}\\[2ex]
 In case of super smoothness $\wSoS=\Nsuite{e^{-j^{2\wSoP}}}$, $\wSoP\in[\wSoP_\star,\wSoP^\star]$, and
 mild ill-posedness as in \cref{illustration:adaptive} we
 consider   a polynomial
 grid $\cK := \set{2^l,
   l\in\nsetB{{\tfrac{1}{2 \wSoP_\star}  \log_2 \lv \log \nlIm \rv}}
   \cap \nsetB{ \tfrac{1}{2 \wSoP_\star} \log_2 \lv \log \nlOp \rv}}$
 and an adaptive factor  $\cRaLo\leq\cRaLo[\nlIm] \wedge \cRaLo[\nlOp]$ with   $\cRaLo[\nlIm] = (\log \log\lv \log\nlIm\rv)^{1/4}$ and
 $ \cRaLo[\nlOp] = (\log \log\lv \log\nlOp\rv)^{1/4}$. 
 We present the
 resulting rate of testing in terms of the
 originals noise levels $\nlIm$ and $\nlOp$ for both the signal
 detection and the goodness-of-fit task  in the table below. \\[3ex]
 \centerline{\begin{tabular}{@{}ll|ll@{}} \toprule
               \multicolumn{4}{@{}l@{}}{Order of $\ymRaCda
               \vee \xmRaCda$  \smaller for $\teFd[\cK,]{\alpha/2}$ with $\cK=\set{2^l,
               l\in\nsetB{{\tfrac{1}{2 \wSoP_\star}  \log_2 \lv \log \nlIm \rv}}
               \cap \nsetB{ \tfrac{1}{2 \wSoP_\star} \log_2 \lv \log \nlOp \rv}}$} \\
               \midrule
               $\wSoS$  & $\wEvS$  & $\ymRaCda $   & $\xmRaCda$ \\
               {\smaller (smoothness)}  & { \smaller (ill-posedness)}& {\smaller $\oSoS \in \lp^2$}& {\smaller$\oSoS =\Nsuite{ j^{-\oSoP}}, 4\oSoP- 4 \wEvP  < 1$} \\
               \midrule
               $\Nsuite{e^{-j^{2\wSoP}}}$ &  $\Nsuite{j^{-\wEvP}}$ & $|\log \nlIm |^{\frac{4 \wEvP+1}{8\wSoP}}(\log\log \lv \log \nlIm \rv)^{\frac{1}{4}}\nlIm $  & $|\log \nlOp |^{\frac{\wEvP}{2\wSoP}}(\log\log \lv \log \nlOp \rv)^{\frac{1}{2}}\nlOp  $ \\
               \bottomrule
              \end{tabular}}%
\end{illustration}
The adaptive factors $\cRaLo[\nlIm] = ( \log\lv \log\nlIm\rv)^{1/4}$
and $\cRaLo[\nlIm] = (\log \log\lv \log\nlIm\rv)^{1/4}$ given in
\cref{illustration:adaptive:d} are minimal due to
\cref{unavoidable:lower,unavoidable:lower:exp},
respectively. Therefore, it is an unavoidable cost to pay for an
adaptation to $\CrwcSo[\CwSo]$ whenever it is nontrivial with
respect to a polynomial or exponential decay.  Lastly, we give
conditions under which a deterioration of the minimax testing radius
$\ymRaCi[{\dr\cRaLo[\nlImS]}\,\nlImS] \vee
\xmRaCi[{\dr\cRaLo[\nlOpS]}\,\oSoS\nlOpS]$ as in
\eqref{indirect:radius} by factors $\cRaLo[\nlImS]$ and
$\cRaLo[\nlOpS]$ is unavoidable for adaptation to the ill-posedness of the model, i.e. a class $\CrwcEv$. Note that the sequence $\wSoS$
is fixed in the next proposition.
\begin{proposition}\label{prop:condition_adaptive:v}
  Let $\alpha\in(0,1)$, $\cRaLo[\nlImS], \cRaLo[\nlOpS] \geq1$,
  $\wSoS\in\CwSo$. Assume a collection of $N \in \Nz$ sequences
  $\lcb \wEvS^j\in \CwEv, j \in \nset{N} \rcb \subset \CwEv$ where
  $\mDiwSo[j]:= \yomDiCivj \wedge \xomDiCivj$ and
  $\mRawSo[j]:= \yomRaCivj \vee \xomRaCivj $ satisfies
  \begin{inparaenum}[i]\renewcommand{\theenumi}{\dr{\normalfont\rmfamily{\dgrau\bfseries(D\arabic{enumi})}}}
  \item\label{pro:da:c2}$\mDiwSo[j]\leq\mDiwSo[l]$ and
  \item\label{pro:da:c3}$\lb \cRaLo[\nlImS] \vee \cRaLo[\nlOpS] \rb^4
    \tfrac{ \smRawSo[j]{2}}{\smRawSo[l]{2}} \leq
    \tfrac{ \qFPr[{\mDiwSo[j]}]{\cRaLo[\nlImS ]^2\snlImS/(\wEvS^l)^2}
      \vee \qFPr[{\mDiwSo[j]}]{\cRaLo[\nlOpS]^2 \snlOpS
        \oSoS^2/(\wEvS^l)^2} }
    { \qFPr[{\mDiwSo[j]}]{\cRaLo[\nlImS ]^2\snlImS/(\wEvS^l \wEvS^j)}
      \vee \qFPr[{\mDiwSo[j]}] {\cRaLo[\nlOpS]^2 \snlOpS \oSoS^2 /(\wEvS^l \wEvS^j)} }$ for  $j,l \in \nset{N}$, $j < l$.
    Suppose  $c_{\alpha}>0$ with 
  \item\label{pro:da:c1}
    $\exp(c_\alpha \lb \cRaLo[\nlImS] \vee \cRaLo[\nlOpS] \rb^4)\leq
    N\alpha^2$.  \end{inparaenum} If $\eta\in(0,1]$ satisfies
  $\eta \leq
  \inf_{j\in\nset{N}}\smRawSo[j]{-2}(\rqFPr[{\mDiwSo[j]}]{\cRaLo[\nlImS]^2
    \snlImS/(\wEvS^j)^2}\vee \rqFPr[{\mDiwSo[j]}]{\cRaLo[\nlOpS]^2
    \snlOpS \oSoS^2/(\wEvS^j)^2}\wedge\swSo[{\mDiwSo[j]}])$ and
  $\uacst[2]{\alpha} := \eta(r \wedge \sqrt{\log(1 + \alpha^2)}\wedge
  \sqrt{c_\alpha})$
  \begin{equation}\label{pro:da:e1}
    \forall A \in [0,\uacst{\alpha}]:
    \inf_{\teF} \sup_{(\wSoS, \wEvS) \in \lcb \wSoS \rcb \times \CwEv}
    \RiT{\teF}{\rwcSo,\rwcEv}{\oSoS,A
      [\ymRaCi[{\dr\cRaLo[\nlImS]}\,\nlImS] 
      \vee  \xmRaCi[{\dr\cRaLo[\nlOpS]}\,\oSoS\nlOpS]}\geq 1-\alpha,
  \end{equation}
  i.e.  $\cRaLo[\nlImS]$ respectively $\cRaLo[\nlOpS]$ are lower
  bounds for the minimal adaptive factors over $\CrwcEv$.
\end{proposition}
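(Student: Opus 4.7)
The plan is to follow the blueprint of \cref{prop:condition_adaptive}, now with the collection indexing a family $\lcb \wEvS^j : j \in \nset{N} \rcb \subset \CwEv$ of ill-posedness sequences rather than smoothness sequences. For each $j \in \nset{N}$ and each $\tau \in \lcb \pm 1 \rcb^{\mDiwSo[j]}$ define $\tSoS^{j\tau} \in \lp^2$ with $\tSo[\Di]^{j\tau} = 0$ for $\Di > \mDiwSo[j]$ and
\[
  \tSo[\Di]^{j\tau} := \tau_\Di \frac{\mRawSo[j]\sqrt{\zeta\eta}}{\rqFPr[{\mDiwSo[j]}]{\cRaLo[\nlImS]^2\snlImS/(\wEvS^j)^2} \vee \rqFPr[{\mDiwSo[j]}]{\cRaLo[\nlOpS]^2\snlOpS\oSoS^2/(\wEvS^j)^2}} \cdot \frac{{\onlIm[\Di]}^{2,\cRaLo[]}}{(\wEv[\Di]^j)^2}, \quad \Di \in \nset{\mDiwSo[j]},
\]
with $\zeta := r \wedge \sqrt{\log(1+\alpha^2)} \wedge \sqrt{c_\alpha}$ and ${\onlIm[\Di]}^{2,\cRaLo[]} := \cRaLo[\nlImS]^2 \snlIm[\Di] + \cRaLo[\nlOpS]^2 \oSo[\Di]^2 \snlOp[\Di]$. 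The lower bound on $\eta$ given in the statement is tailored exactly so that, reproducing line-by-line the verification of conditions \ref{lowerbound.a} and \ref{lowerbound.b} from the proof of \cref{lowerbound}, one obtains $\tSoS^{j\tau} \in \rwcSo \cap \lp[\uacst{\alpha}\mRawSo[j]]^2$ for every $\tau$; consequently $\oSoS + \tSoS^{j\tau}$ lies on the $\uacst{\alpha}\mRawSo[j]$-separated alternative under the ill-posedness profile $\wEvS^j \in \CwEv$.

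Assemble the alternative-supported uniform mixtures $\FuVg{1,j} := 2^{-\mDiwSo[j]}\sum_{\tau}\FuVg[\nlImS,\nlOpS]{\oSoS + \tSoS^{j\tau}, \wEvS^j}$ and $\FuVg{1} := N^{-1}\sum_{j \in \nset{N}}\FuVg{1,j}$, and pick any fixed $\wEvS^0 \in \CwEv$ (for instance $\wEvS^0 := \wEvS^1$) to set $\FuVg{0} := \FuVg[\nlImS,\nlOpS]{\oSoS, \wEvS^0}$, which is a legitimate choice on the null since the latter is composite in $\EvS$. A standard Le Cam reduction as in \eqref{reduction} then yields
\[
  \inf_{\teF}\sup_{(\wSoS,\wEvS) \in \lcb \wSoS \rcb \times \CwEv} \RiT{\teF}{\rwcSo,\rwcEv}{\oSoS, A[\ymRaCi[{\dr\cRaLo[\nlImS]}\nlImS] \vee \xmRaCi[{\dr\cRaLo[\nlOpS]}\oSoS\nlOpS]]} \geq 1 - \sqrt{\chi^2(\FuVg{1}, \FuVg{0})/2}.
\]

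The main obstacle --- and the structural difference from \cref{prop:condition_adaptive} --- lies in bounding $\chi^2(\FuVg{1}, \FuVg{0})$. There $\wEvS$ was common to null and alternative, so the reparametrisation $\tyObS := \yObS - \oSoS\xObS$ rendered $\xObS$ ancillary and only the marginal of $\tyObS$ entered the divergence. Here each mixture component of $\FuVg{1}$ carries its own operator $\wEvS^j$, so the full joint law of $(\yObS, \xObS)$ has to be used and the baseline difference $\wEvS^j - \wEvS^0$ contributes an extra drift both in the mean of $\yObS$ (through $(\wEvS^j - \wEvS^0)\oSoS$) and in the mean of $\xObS$. A direct Gaussian computation on the joint densities, averaging over $\tau,\tau'$ in the spirit of \cref{lemma:adaptive_lower_bound}, shows that $\chi^2(\FuVg{1},\FuVg{0})+1$ is a sum over pairs $(j,l)$ of exponentials whose exponents, after substituting the definition of $\tSoS^{j\tau}$ and truncating at $\mDiwSo[j]\wedge\mDiwSo[l]$ via \ref{pro:da:c2}, match precisely the right-hand side of the inequality in \ref{pro:da:c3}; this is exactly why the condition is formulated with the ratio of $(\wEvS^l)^2$ and $\wEvS^l\wEvS^j$ that it takes. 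Consequently each off-diagonal exponent is bounded by $\zeta^2(\cRaLo[\nlImS] \vee \cRaLo[\nlOpS])^4 \smRawSo[j]{2}/\smRawSo[l]{2} \leq \zeta^2$ for $j \leq l$, while the diagonal exponents are at most $\zeta^2(\cRaLo[\nlImS] \vee \cRaLo[\nlOpS])^4$; combining these bounds as at the end of the proof of \cref{prop:condition_adaptive} and invoking \ref{pro:da:c1} together with $\zeta^2 \leq \log(1+\alpha^2)$ gives $\chi^2(\FuVg{1},\FuVg{0}) \leq 2\alpha^2$, which plugged into the Le Cam bound above proves \eqref{pro:da:e1}.
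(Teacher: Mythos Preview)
Your choice of a \emph{single-point} null $\FuVg{0} := \FuVg[\nlImS,\nlOpS]{\oSoS,\wEvS^{0}}$ is the source of a genuine gap. With this choice the likelihood ratio $d\FuVg{1,j}/d\FuVg{0}$ carries, in addition to the $\tau$-modulated signal part, two \emph{deterministic} drift contributions: one from the mean of $\xObS$ (namely $\wEvS^{j}-\wEvS^{0}$) and one from the mean of $\yObS$ (namely $(\wEvS^{j}-\wEvS^{0})\oSoS$). A direct Gaussian computation therefore yields, for each pair $(j,l)$, an exponent containing the extra terms
\[
\sum_{i\geq 1}\frac{(\wEv[i]^{j}-\wEv[i]^{0})(\wEv[i]^{l}-\wEv[i]^{0})}{\snlOp[i]}
\quad\text{and}\quad
\sum_{i\geq 1}\frac{(\wEv[i]^{j}-\wEv[i]^{0})(\wEv[i]^{l}-\wEv[i]^{0})\,\oSo[i]^{2}}{\snlIm[i]},
\]
which survive the average over $\tau,\tau'$, are \emph{not} truncated at $\mDiwSo[j]\wedge\mDiwSo[l]$, and are nowhere controlled by \ref{pro:da:c2}--\ref{pro:da:c1}. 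In the typical homoscedastic application (polynomial $\wEvS^{j}$'s, constant $\nlOp$) the first of these two sums is of order $\nlOp^{-2}$ and hence the diagonal $(j=l)$ contributions alone already make $\chi^{2}(\FuVg{1},\FuVg{0})$ explode. So the assertion that the exponents ``match precisely the right-hand side of \ref{pro:da:c3}'' is incorrect.

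The paper avoids this by taking the null itself to be the mixture
$\FuVg{0}:=\tfrac{1}{N}\sum_{j=1}^{N}\FuVg[\nlImS,\nlOpS]{\oSoS,\wEvS^{j}}$,
pairing each alternative component with a null component having the \emph{same} operator $\wEvS^{j}$. Passing to the reparametrisation $(\tyObS,\xObS)$ then makes the $\tyObS$-marginal under every null component equal to $\FuVg[\onlImS]{\nSoS}$, while for fixed $j$ the conditional law of $\xObS$ given $\tyObS$ is the same under the $j$-th null and the $j$-th alternative component (it does not depend on $\tSoS$). This is precisely what eliminates the spurious drift terms and lets \cref{lemma:adaptive_lower_bound} apply to the $\tyObS$-marginals alone, producing exponents of the form $\qFPr[{\mDiwSo[j]\wedge\mDiwSo[l]}]{\wEvS^{j}\wEvS^{l}\tSoS^{j}\tSoS^{l}/\sonlImS}$, which \ref{pro:da:c3} is designed to bound.
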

\begin{proof}[Proof of \cref{prop:condition_adaptive:v}]
  The proof follows along the lines of the proof of
  \cref{prop:condition_adaptive}, which relies on the reduction scheme
  and notation used in the proof of \cref{lowerbound}. Let us define
  ${\onlImS}^{2,\cRaLo[]} := \cRaLo[\nlImS]^2 \snlImS +
  \cRaLo[\nlOpS]^2\oSoS^2\snlOpS$ For each $j\in\nset{N}$ we introduce
  $\tSoS^{j} \in\lp^2$ with $\tSo[\Di]^{j} =0$ for $\Di >
  \mDiwSo[j]$,%
  \begin{equation}\label{pro:da:p1}
    \tSo[\Di]^{j} := \frac{\mRawSo[j] \sqrt{ \zeta \eta }}{
      \rqFPr[{\mDiwSo[j]}]{\cRaLo[\nlImS]^2\snlImS/(\wEvS^j)^2}
      \vee
      \rqFPr[{\mDiwSo[j]}]{\cRaLo[\nlOpS]^2\snlOpS\oSoS^2/(\wEvS^j)^2}}
    \frac{{\onlIm[\Di]}^{2,\cRaLo[]}}{(\wEv[\Di]^j)^2},
    \; \Di \in \nset{\mDiwSo[j]}\,\text{ and }\,\zeta: = r
    \wedge \sqrt{\log(1+\alpha^2)}\wedge \sqrt{c_{\alpha}},
  \end{equation}
  where
  $\tSoS^{j} \in \wrcSo[\wSoS]\cap
  \lp[\underline{A}_\alpha{\mRawSo[j]}]^2$ arguing line by line as in
  the proof of \cref{lowerbound}.  Considering the uniform mixture
  measure over the vertices of a hypercube
  $\FuVg{1,j} := {2^{-\mDiwSo[j]}} \sum_{\tau \in
    \set{\pm1}^{\mDiwSo[j]}} \FuVg[\nlImS,\nlOpS]{\oSoS+\tSoS^{j\tau},
    \wEvS^j}$, the uniform mixture measure
  $\FuVg{1}:= \frac{1}{N} \sum_{j=1}^N\FuVg{1,j} $ and
  $\FuVg{0} :=  \frac{1}{N} \sum_{j=1}^N \FuVg[\nlImS,\nlOpS]{\oSoS, \wEvS^j}$ are supported on
  the alternative and the null hypothesis, respectively. The joint
  distribution $\tFuVg{0}$ and $\tFuVg{1}$ of the reparametrised
  observation $(\tyObS:=\yObS-\oSoS\xObS,\xObS)$ given $\FuVg{0}$ and
  $\FuVg{1}$, respectively, still satisfies
  $\text{TV}(\FuVg{1}, \FuVg{0})=\text{TV}(\tFuVg{1}, \tFuVg{0})$ and,
  hence, from \eqref{reduction} it follows
  \begin{equation} 
    \label{reduction:3}
    \inf_{\teF} \sup_{(\wSoS, \wEvS) \in \lcb \wSoS \rcb \times \CwEv}
    \RiT{\teF}{\rwcSo,\rwcEv}{\oSoS,A[\yomRaCiaa \vee  \xomRaCiaa] }
    \geq  1 - \text{TV}(\tFuVg{1}, \tFuVg{0})
  \end{equation}
  Since $\tyObS$ is a sufficient statistic for $\tSoS$, the
 conditional distribution of $\xObS$ given $\tyObS$ does not depend
 on $\tSoS$. Hence, $\text{TV}(\tFuVg{1}, \tFuVg{0})$ can be bounded by the total variation distance and thus the $\chi^2$-divergence of the mixture over the marignal distributions $\FuVg[\onlImS]{\wEvS^j \tSoS^\tau}$ of $\tyObS$. 
Applying
  \cref{lemma:adaptive_lower_bound} in the appendix, we obtain
  \begin{align*}
    \text{TV}(\tFuVg{1},\tFuVg{0})  \leq N^{-2}\sum_{j\in\nset{N}}\sum_{l\in\nset{N}}
    \exp \lb \tfrac{1}{2} \qFPr[{\mDiwSo[j]\wedge\mDiwSo[l]}]{\wEvS^j
    \wEvS^l \tSoS^{j}\tSoS^{l}/\sonlImS}\rb- 1.
  \end{align*}
  Exploiting \ref{pro:da:c2} and the definition of $\eta$ for
  $1\leq j\leq l\leq N$ it holds
  $ \qFPr[{\mDiwSo[j]\wedge\mDiwSo[l]}]{\wEvS^j \wEvS^l
    \tSoS^{j}\tSoS^{l}/\sonlImS} =\qFPr[{\mDiwSo[j]}]{\wEvS^j \wEvS^l
    \tSoS^{j}\tSoS^{l}/\sonlImS} \leq \frac{2\zeta^2 (\cRaLo[\nlImS]
    \vee \cRaLo[\nlOpS])^4 \smRawSo[j]{2}}{\smRawSo[l]{2}} \frac{
    \qFPr[{\mDiwSo[j]}]{\cRaLo[\nlImS ]^2\snlImS/(\wEvS^l \wEvS^j)}
    \vee \qFPr[{\mDiwSo[j]}]{\cRaLo[\nlOpS]^2 \snlOpS \oSoS^2
      /(\wEvS^l
      \wEvS^j)}}{\qFPr[{\mDiwSo[j]}]{\cRaLo[\nlImS]^2\snlImS/(\wEvS^j)^2}
    \vee\qFPr[{\mDiwSo[j]}]{\cRaLo[\nlOpS]^2\snlOpS\oSoS^2/(\wEvS^{j})^2}}$,
  and, hence,
  $\text{TV}(\tFuVg{1},\tFuVg{0}) \leq \tfrac{1}{N} \exp \lb
  \zeta^2\lb \cRaLo[\nlImS] \vee \cRaLo[\nlOpS] \rb^4 \rb+
  \tfrac{N(N-1)}{N^2}\exp \lb \zeta^2\rb- 1$ due to
  \ref{pro:da:c3}. Combining the last bound, the definition of $\zeta$
  and \ref{pro:da:c1} implies
  $\text{TV}(\tFuVg{1},\tFuVg{0}) \leq 2\alpha^2$, which together
  with \eqref{reduction:3} shows \eqref{pro:da:e1} and completes the
  proof.
\end{proof}
The conditions \ref{pro:da:c2} - \ref{pro:da:c1} in the last assertion
are similar to \ref{pro:ca:c2} - \ref{pro:ca:c1} in
\cref{prop:condition_adaptive}, which are briefly discussed in
\cref{rem:condition_adaptive}. Let us demonstrate how to use
\cref{prop:condition_adaptive:v} in the homoscedastic case, when
$\CwEv$ is nontrivial with respect to polynomial decay. The next
result is similar to \cref{unavoidable:lower} where we distinguished
cases \ref{eps_larger} and \ref{sigma_larger} insuring roughly that
either $\ymRaCia$ or $\xmRaCia$ is dominant. In the next result we
only consider a case similar to \ref{eps_larger}, since in the
opposite case the obtainable lower bound does not match the upper
bound of the direct-$\max$-test.
\begin{theorem}\label{unavoidable:lower:nu}
  In the homoscedastic case $\nlImS = \Nsuite[j]{\nlIm}$ and
  $\nlOpS = \Nsuite[j]{\nlOp}$ with
  $\cRaLo[\nlImS]^4 = \log \lv \log \nlIm \rv$, let
  $\wSoS = \Nsuite{j^{-\wSoP}}$, $\wSoP > 1/2$, and
  $\CwEv = \lcb \Nsuite[j]{ j^{-\wEvP}}, \wEvP \in [\wEvP_\star,
  \wEvP^\star] \rcb$ for $\wEvP_\star < \wEvP^\star$.  For
  $\alpha \in (0,1)$ set
  $\uacst[2]{\alpha} := \eta(r \wedge \sqrt{\log(1 + \alpha^2)}\wedge
  1/2)$ with $\eta$ as in \cref{prop:condition_adaptive:v}. There
  exists ${\nlIm}_{\alpha}\in(0,1)$ such that for all
  $(\nlIm, \nlOp) \in [0, {\nlIm}_\alpha] \times [0,1)$ with
  $\yomRaCiaa \geq \xomRaCiaa $ for all $ \wEvS \in \CwEv$ 
  \begin{equation}\label{unavoidable:eps:nu}
    \forall A \in [0,\uacst{\alpha}]:
    \inf_{\teF}  \sup_{(\wSoS, \wEvS) \in \lcb \wSoS \rcb\times\CwEv} 
    \RiT{\teF}{\rwcSo,\rwcEv}{\oSoS,A\yomRaCiaa}\geq 1-\alpha. 
  \end{equation} 
\end{theorem}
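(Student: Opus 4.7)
The proof will closely mirror that of \cref{unavoidable:lower}, this time applying \cref{prop:condition_adaptive:v} to a finite subfamily $\{\wEvS^l = \Nsuite{j^{-\wEvP_l}} : l \in \nset{N}\} \subset \CwEv$ of polynomially decaying ill-posedness sequences (rather than a collection of signal-smoothness sequences). Since $\yomRaCiaa \geq \xomRaCiaa$ for every $\wEvS \in \CwEv$, the $\xomRaCivj$-terms are dominated throughout and we may identify $\mRawSo[l] = \yomRaCivj$, $\mDiwSo[l] = \yomDiCivj$. From the mildly ill-posed entries of \cref{illustration:indirect} and \cref{illustration:adaptive}, uniformly over $\wEvP_l \in [\wEvP_\star, \wEvP^\star]$,
\[
  \mRawSo[l] \asymp (\cRaLo[\nlImS]\nlIm)^{e(\wEvP_l)}, \qquad
  \mDiwSo[l] \asymp (\cRaLo[\nlImS]\nlIm)^{-d(\wEvP_l)},
\]
with $e(\wEvP) := \tfrac{4\wSoP}{4\wSoP+4\wEvP+1}$ and $d(\wEvP) := \tfrac{4}{4\wSoP+4\wEvP+1}$, both strictly decreasing in $\wEvP$.

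Since $e$ is strictly monotone on $[\wEvP_\star, \wEvP^\star]$, I would pick $\wEvP_l$ so that $e(\wEvP_l) = e(\wEvP_\star) - (l-1)\Delta$ is equispaced in the image of $e$, with increment $\Delta := (e(\wEvP_\star)-e(\wEvP^\star))/N$ and cardinality $N := \tfrac{e(\wEvP_\star)-e(\wEvP^\star)}{4}\,\tfrac{|\log(\cRaLo[\nlImS]\nlIm)|}{\log \cRaLo[\nlImS]}$, matching the scaling used in the proof of \cref{unavoidable:lower}. Under this choice the monotonicity condition \ref{pro:da:c2} is secured by the monotone ordering of the $\wEvP_l$ (after relabelling if needed); the cardinality condition \ref{pro:da:c1} holds for $\nlIm$ small enough because $\tfrac{1}{2}\cRaLo[\nlImS]^2 - \log N \to -\infty$ as $\nlIm \to 0$; and the $\eta$-constraint is the standard uniform bias-variance balance of \cref{lowerbound} applied on the compact interval $[\wEvP_\star, \wEvP^\star]$.

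The main obstacle is the verification of the ratio condition \ref{pro:da:c3}, which is substantially more delicate than its analogue \ref{pro:ca:c3} because of the additional quadratic-form ratio on the right-hand side depending nontrivially on the pair $(\wEvP_j, \wEvP_l)$. Setting $\cRaLo[\nlOpS] = 1$ and exploiting the domination of the $\oSoS\nlOpS$-terms, the inequality to verify reduces to
\[
\cRaLo[\nlImS]^4 \frac{(\mRawSo[j])^2}{(\mRawSo[l])^2} \;\leq\; \frac{\qFPr[{\mDiwSo[j]}]{\snlImS/(\wEvS^l)^2}}{\qFPr[{\mDiwSo[j]}]{\snlImS/(\wEvS^l \wEvS^j)}}, \qquad j<l.
\]
For the polynomial family both sides admit explicit asymptotic expansions: the right-hand ratio is of order $(\mDiwSo[j])^{\wEvP_l - \wEvP_j}$, while the equispacing of $e(\wEvP_l)$ gives left-hand side of order $\cRaLo[\nlImS]^4 (\cRaLo[\nlImS]\nlIm)^{C|\wEvP_l - \wEvP_j|}$ for some $C>0$ depending only on $\wSoP$ and $\wEvP^\star$. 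Inserting $\mDiwSo[j] \asymp (\cRaLo[\nlImS]\nlIm)^{-d(\wEvP_j)}$ and comparing exponents in powers of $(\cRaLo[\nlImS]\nlIm)^{-1}$ shows that the inequality holds uniformly over $j < l \in \nset{N}$ for all $\nlIm$ sufficiently small, the polylogarithmic prefactor $\cRaLo[\nlImS]^4$ being absorbed by any strictly positive power of $(\cRaLo[\nlImS]\nlIm)^{-1}$. With \ref{pro:da:c1}--\ref{pro:da:c3} verified, \cref{prop:condition_adaptive:v} delivers \eqref{unavoidable:eps:nu} with $\uacst[2]{\alpha} = \eta(r \wedge \sqrt{\log(1+\alpha^2)} \wedge 1/2)$ as claimed.
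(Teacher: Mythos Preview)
Your proposal follows essentially the same route as the paper's proof: both apply \cref{prop:condition_adaptive:v} to a finite subfamily of $\CwEv$ with the exponents $e(\wEvP_l)=\tfrac{4\wSoP}{4\wSoP+4\wEvP_l+1}$ taken equispaced, using the identical cardinality $N$ and spacing $\Delta$ from the proof of \cref{unavoidable:lower} and verifying \ref{pro:da:c2}--\ref{pro:da:c1} by the same asymptotic estimates. The only cosmetic differences are that your labelling of the $\wEvP_l$ is reversed relative to the paper's and that you engage explicitly with the additional quadratic-form ratio on the right of \ref{pro:da:c3}, whereas the paper simply recycles the bound $\sup_{j<l}\cRaLo[\nlImS]^2\,\mRawSo[j]/\mRawSo[l]\to0$ from the analogous condition \ref{pro:ca:c3} without separately treating that ratio.
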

\begin{proof}[Proof of \cref{unavoidable:lower:nu}]Applying
  \cref{prop:condition_adaptive:v}, the proof follows along the lines of the
  proof of \cref{unavoidable:lower}. By assumption for
  $\wSoS = \Nsuite[j]{ j^{-\wSoP}}$ and
  $\wEvS = \Nsuite[j]{ j^{-\wEvP}}$ setting
  $e(\wEvP):= \tfrac{4 \wSoP}{4\wSoP+4 \wEvP+1 }$ we have
  $\cst[-1]{} \leq \yomRaCiaa / \lb \nlIm \cRaLo[\nlImS]
  \rb^{e(\wEvP)} \leq \cst{}$ and
  $\cst[-1]{} \leq \yomDiCiaa / \lb \nlIm \cRaLo[\nlImS]
  \rb^{e(\wEvP)/(2\wSoP)} \leq \cst{}$ for some constant
  $\cst{} > 0$.  Let $e_\star := e(\wEvP^\star)$,
  $e^\star := e(\wEvP_\star)$ and
  $\lcb e(\wSoP_l):=e_\star + (l-1) \Delta : l \in \nset{N}\rcb$,
  where $\Delta := \frac{e^\star - e_\star}{N}$ and
  $N:= \frac{e^\star - e_\star}{4} \frac{\lv \log(\cRaLo[\nlImS]
    \nlIm))\rv}{\log(\cRaLo[\nlImS])}$.  The collection of $N$
  sequences is now given by
  $\lcb (j^{-2 \wEvP_l})_{j \in \IN} : l \in \nset{N} \rcb$.  It
  remains to check \ref{pro:da:c2} - \ref{pro:da:c1} for
  $\cRaLo[\nlImS]^4 = \log \lv \log \nlIm \rv$ (setting
  $\cRaLo[\nlOpS] = 1$). Since by construction
  $\sup_{j<l} \cRaLo[\nlImS]^2 \tfrac{\smRawSo[j]{}}{\smRawSo[l]{}}
  \leq \cRaLo[\nlImS]^2 \cst[2]{} \lb \cRaLo[\nlImS] \nlIm\rb^{\Delta}
  \to 0$ and $\sup_{j<l} \tfrac{\mDiwSo[j]}{\mDiwSo[l]} \to 0$ as
  $\nlIm \to 0$, \ref{pro:da:c2} and \ref{pro:da:c3} hold for $\nlIm$
  small enough.  Finally \ref{pro:da:c1} follows from
  $c_{\alpha} \cRaLo[\nlImS]^2 - \log(N) \leq 2 \log(\alpha)$ for
  $\nlIm$ small enough, since
  $\frac{1}{2} \cRaLo[\nlImS]^2 - \log(N) \to -\infty$ as
  $\nlIm \to 0$, which completes the proof.
\end{proof}
\begin{remark}In the homoscedastic case with prior known ordinary
  smoothness of the alternative and mild ill-posedness with
  unknown degree $\wEvP\in[\wEvP_\star, \wEvP^\star]$
  an adaptive factor of order
  $\cRaLo[\nlImS] = (\log \lv \log \nlIm \rv)^{1/4}$ is unavoidable
  due to \cref{unavoidable:lower:nu} and the direct-$\max$-test attains
  the minimax-optimal rate with a minimal adaptive factor (see
  \cref{illustration:adaptive:d}). Analogously to
  \cref{unavoidable:lower:exp}, if the alternative is known to be super
  smooth an adaptive factor of order
  $\cRaLo[\nlImS] = (\log\log \lv \log \nlIm \rv)^{1/4}$ is
  unavoidable for adaptation to unknown ill-posedness, and hence the adaptive factor in the testing rate of
  the direct-$\max$-test (see \cref{illustration:adaptive:d}) is also
  minimal. However, the optimality of the direct-$\max$-test is only
  guaranteed if the rate in terms of $\oSoS\nlOp$ is negligible compared to the
  rate in $\nlIm$. The order of an optimal rate in the opposite case
  is still an open question, when the ill-posedness of the model is unknown. 
\end{remark}

%
%
%
%
\appendix 
\section{Appendix}\label{appendix}
In this section we gather technical results and their proofs.
\begin{lemma}\label{re:argmin} Let $\aS\in\pRz^\Nz$ and  $\bS$, $\ceS\in\pRz^\Nz$ be
  monotonically nonincreasing and nondecreasing, respectively. For
  $\dRai[\bS]:=\min(\aS\vee \bS)$ and
  $\dRai[\ceS]:=\min(\aS\vee \ceS)$ it follows
  $\dRai[\bS]\vee\dRai[\ceS]=\dRai[\bS\vee
    \ceS]:=\min(\aS\vee \bS\vee
  \ceS)$. Moreover,
  $\dDii[\bS]:=\argmin(\aS\vee \bS)$ and
  $\dDii[\ceS]:=\argmin(\aS\vee \ceS)$ satisfy
  $\dDii[\bS]\wedge\dDii[\ceS]=\dDii[\bS\vee
    \ceS]:=\argmin(\aS\vee \bS\vee \ceS)$.
\end{lemma}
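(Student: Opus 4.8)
The plan is to prove both identities together by a direct argument that uses only the monotonicity of $\bS$ and $\ceS$ together with the convention that $\dDii[\bS]$ and $\dDii[\ceS]$ denote the \emph{smallest} indices attaining the respective minima. Write $p:=\dDii[\bS]$, $q:=\dDii[\ceS]$ and $m:=p\wedge q$. I would first record the trivial bound $\dRai[\bS\vee\ceS]\geq\dRai[\bS]\vee\dRai[\ceS]$, which follows by taking componentwise minima in $\aS\vee\bS\vee\ceS\geq\aS\vee\bS$ and in $\aS\vee\bS\vee\ceS\geq\aS\vee\ceS$; note this requires no attainment, so it actually bounds the infimum of $\aS\vee\bS\vee\ceS$ from below.

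For the reverse inequality I would evaluate $\aS\vee\bS\vee\ceS$ at the single index $m$. If $p\leq q$ (so $m=p$), then $a_m\vee b_m=\dRai[\bS]$ by definition of $p$, while $\ceS$ nondecreasing and $p\leq q$ give $c_m=c_p\leq c_q\leq a_q\vee c_q=\dRai[\ceS]$; hence $a_m\vee b_m\vee c_m\leq\dRai[\bS]\vee\dRai[\ceS]$. The case $q\leq p$ (so $m=q$) is symmetric, except that now the monotonicity of $\bS$ is invoked: $a_m\vee c_m=\dRai[\ceS]$ and $b_m=b_q\leq b_p\leq a_p\vee b_p=\dRai[\bS]$. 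Either way $\inf(\aS\vee\bS\vee\ceS)\leq a_m\vee b_m\vee c_m\leq\dRai[\bS]\vee\dRai[\ceS]$, which combined with the previous paragraph forces equality throughout. This simultaneously establishes $\dRai[\bS]\vee\dRai[\ceS]=\dRai[\bS\vee\ceS]$, shows that $\min(\aS\vee\bS\vee\ceS)$ is in fact attained (at $m$), and yields $\dDii[\bS\vee\ceS]\leq m=p\wedge q$.

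It then remains to show $\dDii[\bS\vee\ceS]\geq p\wedge q$, i.e.\ that no index $j<p\wedge q$ attains $\min(\aS\vee\bS\vee\ceS)$. For such $j$ we have $j<p=\dDii[\bS]$, and since $p$ is the \emph{smallest} minimiser of $\aS\vee\bS$ this forces the strict inequality $a_j\vee b_j>\dRai[\bS]$; likewise $j<q$ gives $a_j\vee c_j>\dRai[\ceS]$. Hence $a_j\vee b_j\vee c_j>\dRai[\bS]\vee\dRai[\ceS]=\dRai[\bS\vee\ceS]$, so $j$ is not a minimiser, and therefore $\dDii[\bS\vee\ceS]=p\wedge q=\dDii[\bS]\wedge\dDii[\ceS]$. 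I do not expect a genuine obstacle here; the only points needing care are keeping the two symmetric cases apart (one uses $\bS$ nonincreasing, the other $\ceS$ nondecreasing) and making sure the inequalities in the final step are \emph{strict}, which is precisely where the smallest-index convention of $\argmin$ enters.
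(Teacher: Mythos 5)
Your proof is correct and takes a genuinely different, and noticeably simpler, route than the paper's. The paper identifies the minimiser sets of $\aS\vee\bS$, $\aS\vee\ceS$ and $\aS\vee\bS\vee\ceS$ as integer intervals, proves a containment among them, and then runs a two-case analysis that also exploits the monotonicity of $\aS$ --- in particular, that $\aS\vee\bS\vee\ceS$ coincides with $\aS$ to the left of $\dDii[\bS\vee\ceS]$. You sidestep all of this by evaluating $\aS\vee\bS\vee\ceS$ at the single index $m=\dDii[\bS]\wedge\dDii[\ceS]$ and observing that the ``other'' nondecreasing sequence is already dominated by its own minimum there; together with the trivial bound $\dRai[\bS\vee\ceS]\geq\dRai[\bS]\vee\dRai[\ceS]$, this gives the value identity, attainment at $m$, and, via the smallest-index convention, the minimiser identity in a handful of lines. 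Your argument in fact never uses the monotonicity of $\aS$, so it is marginally more general than the hypotheses demand. One small slip worth flagging: in your closing parenthetical you write that one case ``uses $\bS$ nonincreasing,'' but your own step $b_q\leq b_p$ for $q\leq p$ relies on $\bS$ being \emph{nondecreasing}; both of your cases use the nondecreasing monotonicity of one of $\bS$, $\ceS$. The proof steps themselves are all fine.
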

\begin{figure}[h]
	\centering
	\begin{tikzpicture}
	\draw[->] (-0.2,0) -- (10.2,0) node[right] {$\Di$};
	\draw[->] (0,-1.2) -- (0,5.4);
	\draw[color=red] (0,4.0) to  (10,0.5) node[right] {$\aS$};
	\draw[color=blue] (0,0.2) to  (10,5.2) node[right] {$\bS$};
	\draw[color=teal] (0,2.5) -- (10,4) node[right] {$\ceS$};
	\draw[color=teal, dotted] (3,-0.2) -- (3,2.95);
	\draw[color=teal] (3,-0.2) -- (3,0.2);
	\node[color=teal] at (3,-0.5) {$\dDii[\ceS]$};
	\node[draw,color=red] at (3,-1.2) {$\dDii[\bS\vee \ceS]=\dDii[\bS]\wedge\dDii[\ceS]$};
	\draw[color=blue, dotted] (4.47,-0.2) -- (4.47,2.4) ;
	\draw[color=blue] (4.47,-0.2) -- (4.47,0.2) ;
	\node[color=blue] at (4.47,-0.5) {$\dDii[\bS]$};
	\end{tikzpicture}
	\caption{Illustration of \cref{re:argmin}}
\end{figure} 
\begin{proof}[Proof of \cref{re:argmin}]We start the proof with the
  observation that
  $\dRai[\bS]\vee\dRai[\ceS]\leq\dRai[\bS\vee\ceS]$ and hence also
  $\dDii[\bS]\wedge\dDii[\ceS]\geq\dDii[\bS\vee\ceS]$, since in the nontrivial case
  $\dDii[\bS\vee \ceS]>1$ for each
  $\Di<\dDii[\bS\vee \ceS]$ it holds
  $\aS[\Di]\vee\bS[\Di]\vee\ceS[\Di]=\aS[\Di]>\dRai[\bS\vee
  \ceS]\geq\dRai[\bS]\vee\dRai[\ceS]$.   Moreover,
  there are
  $\odDii[\bS\vee\ceS],\odDii[\bS],\odDii[\ceS]\in\Nz\cup\{\infty\}$
  such that
  $\nset{\dDii[\bS],\odDii[\bS]}:=\minset(\aS\vee
  \bS):=\{\Di\in\Nz:a_{\Di}\vee b_{\Di}\leq a_{j}\vee b_{j},\;\forall j\in\Nz\}$,
  $\nset{\dDii[\ceS],\odDii[\ceS]}=\minset(\aS\vee\ceS)$ and
  $\nset{\dDii[\bS\vee \ceS],\odDii[\bS\vee\ceS]}=\minset(\aS\vee\bS\vee\ceS)$, where
  $\nset{\dDii[\bS],\odDii[\bS]}\subset\nset{\dDii[\bS\vee\ceS],\odDii[\bS\vee \ceS]}$ or
  $\nset{\dDii[\ceS],\odDii[\ceS]}\subset\nset{\dDii[\bS\vee\ceS],\odDii[\bS\vee \ceS]}$, because in
  the nontrivial case $\odDii[\bS\vee \ceS]<\infty$ for $\Di:=\odDii[\bS\vee \ceS]+1$ it holds
  $[\dRai[\bS]\vee \dRai[\ceS]]\leq\dRai[\bS\vee\ceS]<\bS[\Di]\vee \ceS[\Di]=[\aS[\Di]\vee\bS[\Di]]\vee
  [\aS[\Di]\vee\ceS[\Di]]$. Without loss of generality let
  $\nset{\dDii[\bS],\odDii[\bS]}\subset\nset{\dDii[\bS\vee
    \ceS],\odDii[\bS\vee \ceS]}$. Note that
  there is
  $\Di\in\nsetro{\dDii[\bS],\dDii[\bS\vee
    \ceS]}$ if and only if
  $\dRai[\bS]<\aS[\Di]=\aS[\Di]\vee\bS[\Di]\leq [\aS[\Di]\vee\bS[\Di]\vee\ceS[\Di]]=\dRai[\bS\vee \ceS]$, which in
  turn implies
  $\dRai[\bS\vee \ceS]=\aS[\Di]\vee\ceS[\Di]$ for all
  $\Di\in\nsetro{\dDii[\bS],\dDii[\bS\vee
    \ceS]}$.  We distinguish the two
  cases \begin{inparaenum}[i]\renewcommand{\theenumi}{\dgrau\rm(\alph{enumi})}\item\label{re:argmin:c1}
    $ \dRai[\bS]=\dRai[\bS\vee \ceS]$
    and \item\label{re:argmin:c2}
    $ \dRai[\bS]<\dRai[\bS\vee\ceS]$. \end{inparaenum} Firstly, consider
  \ref{re:argmin:c1}
  $ \dRai[\bS]=\dRai[\bS\vee \ceS]$, and
  hence $\dDii[\bS]=\dDii[\bS\vee \ceS]$. Consequently,
  $\dRai[\bS]\vee\dRai[\ceS]\leq\dRai[\bS\vee\ceS]=\dRai[\bS]=\dRai[\bS]\vee\dRai[\ceS]$
  and
  $\dDii[\bS]\wedge\dDii[\ceS]\geq\dDii[\bS\vee\ceS]=\dDii[\bS]=\dDii[\bS]\wedge\dDii[\ceS]$. Consider
  secondly \ref{re:argmin:c2}
  $ \dRai[\bS]<\dRai[\bS\vee \ceS]$, and
  hence $\dDii[\bS]>\dDii[\bS\vee \ceS]$,
  where $\dRai[\bS\vee \ceS]=\aS[\Di]\vee\ceS[\Di]$
  for all
  $\Di\in\nsetro{\dDii[\bS\vee\ceS],\dDii[\bS]}$. Moreover for all
  $\Di\in\nset{\dDii[\bS],\odDii[\bS]}$ it holds
  $\aS[\Di]\vee\bS[\Di]=\dRai[\bS]<\dRai[\bS\vee \ceS]=\aS[\Di]\vee\bS[\Di]\vee\ceS[\Di]$, which in turn
  implies
  $\dRai[\bS\vee \ceS]=\ceS[\Di]=\aS[\Di]\vee\ceS[\Di]$  for all
  $\Di\in\nset{\dDii[\bS],\odDii[\bS]}$. Consequently,
  $\aS[\Di]\vee\ceS[\Di]=\dRai[\bS\vee \ceS]$ for all
  $\Di\in\nset{\dDii[\bS\vee \ceS],\odDii[\bS]}$ and
  $\aS[{\odDii[\bS]}]\vee\ceS[{\odDii[\bS]}]\leq\ceS[\Di]=\aS[\Di]\vee\ceS[\Di]$ for all
  $\Di\geq \odDii[\bS]$. Since
  $\dRai[\ceS]\leq \dRai[\bS\vee
  \ceS]<\aS[\Di]=\aS[\Di]\vee\ceS[\Di]$ for all
  $\Di<\dDii[\bS\vee \ceS]$ it follows
  $\dRai[\ceS]=\dRai[\bS\vee \ceS]$ and
  $\dDii[\ceS]=\dDii[\bS\vee \ceS]$, which in
  turn implies the claims
  $\dRai[\bS]\vee\dRai[\ceS]=\dRai[\bS\vee
  \ceS]$ and
  $\dDii[\bS]\wedge\dDii[\ceS]=\dDii[\bS\vee\ceS]$, and completes the proof.
\end{proof}

\begin{lemma}\label{re:qchi} For $\mu_{\mbullet}\in\lp^2$ and $e_{\mbullet} \in\pRz^{\Nz}$ let   $\zObS\sim\FuVg[e_{\mbullet}]{\mu_{\mbullet}}$. For each $\Di\in\Nz$ define $Q_{\Di}:=\sum_{j\in\nset{\Di}}\zOb[j]^2$ and denote by
     $\FuVgQ[e_{\mbullet}]{\mu_{\mbullet},\Di}$ its distribution,
     i.e., $Q_{\Di}\sim\FuVgQ[e_{\mbullet}]{\mu_{\mbullet},\Di}$, and by $\quVgQ[e_{\mbullet}]{\mu_{\mbullet},\Di}{u}$ the $1-u$
    quantile of $\FuVgQ[e_{\mbullet}]{\mu_{\mbullet},\Di}$,
    i.e., $\FuVg[e_{\mbullet}]{\mu_{\mbullet}}\big(Q_{\Di}\leq\quVgQ[e_{\mbullet}]{\mu_{\mbullet},\Di}{u}\big)=1-u$.
    For any $\Di\in\Nz$ and  $u\in(0,1)$  with $\lcst{u}:=\sqrt{|\log
      u|}$ we have
    \begin{align}\nonumber
      \quVgQ[e_{\mbullet}]{\nSoS,\Di}{u}&\leq
      \qFPr{e_{\mbullet}}+2\lcst{u}\,\rqFPr{e^2_{\mbullet}}+2\lcst[2]{u}\mFPr{e^2_{\mbullet}}\\&\label{qchi:e1}\hspace*{20ex}\leq
      \qFPr{e_{\mbullet}}+2\big(\lcst{u}+\lcst[2]{u}\big)\rqFPr{e_{\mbullet}^2}\\\label{qchi:e2}
      \quVgQ[e_{\mbullet}]{\mu_{\mbullet},\Di}{1-u}&\geq \qFPr{e_{\mbullet}}+ \tfrac{4}{5}\qFPr{\mu_{\mbullet}} -2\big(5\lcst[2]{u}+\lcst{u}\big)\rqFPr{e^2_{\mbullet}}.\hfill
    \end{align} 
\end{lemma}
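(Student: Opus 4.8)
The plan is to rewrite $Q_{\Di}$ as a linearly weighted sum of
  independent (noncentral) $\chi^2_1$-variables and then to invoke the
  deviation inequality of \cite{Birge2001} (Lemma~8.1), the noncentral
  generalisation of Lemma~1 of \cite{LaurentMassart2000} (see also, in
  notation close to ours, Lemma~2 of \cite{LaurentLoubesMarteau2012}).
  Writing $\zOb[j]=e_j(\xi_j+b_j)$ with $\xi_j\sim\nVg[{0,1}]$ i.i.d.\
  and $b_j:=\mu_j/e_j$, one has
  $Q_{\Di}=\sum_{j\in\nset{\Di}}a_j(\xi_j+b_j)^2$ with $a_j:=e_j^2$.
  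The first step is to record the elementary identities
  $\sum_{j\in\nset{\Di}}a_j=\qFPr{e_{\mbullet}}$,
  $\sum_{j\in\nset{\Di}}a_jb_j^2=\qFPr{\mu_{\mbullet}}$,
  $\sum_{j\in\nset{\Di}}a_j^2=\qFPr{e_{\mbullet}^2}$,
  $\max_{\nset{\Di}}(a_{\mbullet})=\mFPr{e_{\mbullet}^2}$ and
  $\sum_{j\in\nset{\Di}}a_j^2b_j^2=\sum_{j\in\nset{\Di}}e_j^2\mu_j^2
  \leq\mFPr{e_{\mbullet}^2}\qFPr{\mu_{\mbullet}}$, with which the cited
  bound reads: for all $x>0$,
  \begin{align*}
    \FuVg[e_{\mbullet}]{\mu_{\mbullet}}\Big(Q_{\Di}\geq
    \qFPr{e_{\mbullet}}+\qFPr{\mu_{\mbullet}}
    +2\sqrt{\big(\qFPr{e_{\mbullet}^2}+2\textstyle\sum_{j\in\nset{\Di}}e_j^2\mu_j^2\big)x}
    +2\mFPr{e_{\mbullet}^2}x\Big)&\leq e^{-x},\\
    \FuVg[e_{\mbullet}]{\mu_{\mbullet}}\Big(Q_{\Di}\leq
    \qFPr{e_{\mbullet}}+\qFPr{\mu_{\mbullet}}
    -2\sqrt{\big(\qFPr{e_{\mbullet}^2}+2\textstyle\sum_{j\in\nset{\Di}}e_j^2\mu_j^2\big)x}\Big)&\leq e^{-x}.
  \end{align*}

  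For \eqref{qchi:e1} only the central case $\mu_{\mbullet}=\nSoS$ is
  needed, where the mean term collapses to $\qFPr{e_{\mbullet}}$ and
  the variance term to $\qFPr{e_{\mbullet}^2}$; taking $x=\lcst[2]{u}$
  (so $e^{-x}=u$ and $\sqrt{x}=\lcst{u}$) in the first display yields
  $\FuVg[e_{\mbullet}]{\nSoS}\big(Q_{\Di}>\qFPr{e_{\mbullet}}
  +2\lcst{u}\rqFPr{e_{\mbullet}^2}+2\lcst[2]{u}\mFPr{e_{\mbullet}^2}\big)\leq u$,
  i.e.\ the claimed bound on the $1-u$ quantile
  $\quVgQ[e_{\mbullet}]{\nSoS,\Di}{u}$. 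The second inequality in
  \eqref{qchi:e1} then follows from
  $\mFPr{e_{\mbullet}^2}=\max_{\nset{\Di}}(e_{\mbullet}^2)\leq
  \big(\sum_{j\in\nset{\Di}}e_j^4\big)^{1/2}=\rqFPr{e_{\mbullet}^2}$.

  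For \eqref{qchi:e2} I would apply the second display with
  $x=\lcst[2]{u}$, which gives
  $\quVgQ[e_{\mbullet}]{\mu_{\mbullet},\Di}{1-u}\geq
  \qFPr{e_{\mbullet}}+\qFPr{\mu_{\mbullet}}
  -2\lcst{u}\sqrt{\qFPr{e_{\mbullet}^2}+2\sum_{j\in\nset{\Di}}e_j^2\mu_j^2}$.
  Subadditivity of the square root together with
  $\sum_{j\in\nset{\Di}}e_j^2\mu_j^2\leq\mFPr{e_{\mbullet}^2}\qFPr{\mu_{\mbullet}}$
  bounds the subtracted quantity by
  $2\lcst{u}\rqFPr{e_{\mbullet}^2}
  +2\lcst{u}\sqrt{2\mFPr{e_{\mbullet}^2}}\,\rqFPr{\mu_{\mbullet}}$, and
  Young's inequality $2st\leq\tfrac15 s^2+5t^2$ with
  $s=\rqFPr{\mu_{\mbullet}}$, $t=\sqrt{2}\,\lcst{u}\sqrt{\mFPr{e_{\mbullet}^2}}$
  turns its last summand into at most
  $\tfrac15\qFPr{\mu_{\mbullet}}+10\lcst[2]{u}\mFPr{e_{\mbullet}^2}$.
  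Collecting the terms and using $\mFPr{e_{\mbullet}^2}\leq\rqFPr{e_{\mbullet}^2}$
  once more produces
  $\quVgQ[e_{\mbullet}]{\mu_{\mbullet},\Di}{1-u}\geq\qFPr{e_{\mbullet}}
  +\tfrac45\qFPr{\mu_{\mbullet}}-2\big(\lcst{u}+5\lcst[2]{u}\big)\rqFPr{e_{\mbullet}^2}$,
  which is \eqref{qchi:e2}.

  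The computations are routine; the only point demanding care is the
  bookkeeping of constants. One must quote the noncentral deviation
  inequality in exactly the weighted form above (variance proxy
  $\sum a_j^2+2\sum a_j^2b_j^2=\tfrac12\mathrm{Var}(Q_{\Di})$, scale
  term $\max_j a_j$), and the specific constants $\tfrac45$, $2$, $5$
  in \eqref{qchi:e2} are those produced by the split --- the choice
  $\varepsilon=\tfrac15$ in Young's inequality is what fixes them.
  These constants propagate verbatim into the threshold constants of
  \cref{m:re:ub,m:in:ub} and thus into the cost of the Bonferroni
  aggregation studied in \cref{sec:adaptation}.
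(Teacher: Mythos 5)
Your proof is correct and takes essentially the same approach as the paper's: both invoke the weighted noncentral $\chi^2$ deviation inequality of Birgé (2001, Lemma~8.1), take $x=\lcst[2]{u}$, bound the variance proxy $\qFPr{e_{\mbullet}^2}+2\sum_j e_j^2\mu_j^2\leq\qFPr{e_{\mbullet}^2}+2\mFPr{e_{\mbullet}^2}\qFPr{\mu_{\mbullet}}$, split the square root, and apply Young's inequality with $\varepsilon=\tfrac15$. The only cosmetic difference is that you make the reparametrisation $\zOb[j]=e_j(\xi_j+b_j)$ explicit before quoting Birgé's lemma, whereas the paper states the resulting deviation bounds directly.
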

\begin{proof}[Proof of \cref{re:qchi}]We start our proof with the
	observation that
	$\FuEx[e_{\mbullet}]{\mu_{\mbullet}}(Q_{\Di})=\sum_{j\in\nset{\Di}}(e_j^2+\mu_{j}^2)=\qFPr{e_{\mbullet}}+\qFPr{\mu_{\mbullet}}$,
	$\Sigma_{\Di}:=\tfrac{1}{2}\sum_{j\in\nset{\Di}}\FuVar[e_{\mbullet}]{\mu_{\mbullet}}(\zOb[j]^2)=
	\sum_{j\in\nset{\Di}}e_j^2(e_j^2+2\mu_j^2)=\qFPr{e_{\mbullet}^2}+2\qFPr{\mu_{\mbullet}e_{\mbullet}}$
	and
	$\sqrt{\qFPr{e^2_{\mbullet}}}=\rqFPr{e^2_{\mbullet}}\geq\mFPr{e^2_{\mbullet}}=\max_{\nset{\Di}}(e^2_{\mbullet})$, which we use below without further reference.  Due to
	\cite{Birge2001} (Lemma 8.1)  it holds for all $x>0$
	\begin{multline*}
		\FuVg[e_{\mbullet}]{\mu_{\mbullet}}\big(Q_{\Di}-\FuEx[e_{\mbullet}]{\mu_{\mbullet}}(Q_{\Di})\geq
		2\sqrt{\Sigma_{\Di}x}+2\mFPr{e^2_{\mbullet}}x\big)\leq \exp(-x),\\
		\FuVg[e_{\mbullet}]{\mu_{\mbullet}}\big(Q_{\Di}-\FuEx[e_{\mbullet}]{\mu_{\mbullet}}(Q_{\Di})\leq -2\sqrt{\Sigma_{\Di}x}\big)\leq\exp(-x),
	\end{multline*}
	which
	for all $u\in(0,1)$ with  $\lcst{u}=\sqrt{|\log u|}$ implies    \begin{multline}\label{qchi:p1}
		\quVgQ[e_{\mbullet}]{\mu_{\mbullet},\Di}{u}\leq
		\qFPr{e_{\mbullet}}+\qFPr{\mu_{\mbullet}} + 2 \sqrt{\Sigma_{\Di}\lcst[2]{u}}
		+2\mFPr{e^2_{\mbullet}}\lcst[2]{u},\\
		\quVgQ[e_{\mbullet}]{\mu_{\mbullet},\Di}{1-u}\geq\qFPr{e_{\mbullet}}+\qFPr{\mu_{\mbullet}} - 2
		\sqrt{\Sigma_{\Di}\lcst[2]{u}}.\hfill
	\end{multline}
	For $\mu_{\mbullet}=\nSoS\in\lp^2$ we have $\qFPr{\mu_{\mbullet}}=0$ and $\Sigma_{\Di}=\qFPr{e^2_{\mbullet}}$, hence from the
	first bound in \eqref{qchi:p1} we immediately obtain 
        \eqref{qchi:e1}. 
     For $\mu_{\mbullet}\in\lp^2$  we have
	$\Sigma_{\Di}\leq\qFPr{e^2_{\mbullet}}+2\qFPr{\mu_{\mbullet}}\mFPr{e^2_{\mbullet}}$,
	and hence using $\sqrt{x+y}\leq\sqrt{x}+\sqrt{y}$ and
	$2\sqrt{xy}\leq cx+c^{-1}y$ for $x,y,c\in\pRz$ with  $c=10$ it follows
	\begin{multline*}
		2 \sqrt{\Sigma_{\Di}\lcst[2]{u}}\leq 2 \sqrt{2\qFPr{\mu_{\mbullet}}\mFPr{e^2_{\mbullet}}\lcst[2]{u}}+2 \sqrt{\qFPr{e^2_{\mbullet}} \lcst[2]{u}}\\\leq \tfrac{1}{5}\qFPr{\mu_{\mbullet}}+10\mFPr{e^2_{\mbullet}}\lcst[2]{u}+2 \sqrt{\qFPr{e^2_{\mbullet}} \lcst[2]{u}}\leq \tfrac{1}{5}\qFPr{\mu_{\mbullet}}+(10\lcst[2]{u}+2\lcst{u})\rqFPr{e^2_{\mbullet}},
	\end{multline*}
	which together with the second bound in \eqref{qchi:p1} implies 
	\eqref{qchi:e2} and 
        completes the proof.
\end{proof}

\begin{lemma}
	\label{lemma:adaptive_lower_bound}
	 For each $s \in \cS$, where $\cS$ is an arbitrary index set with  $\lv \cS \rv =
        N\in\Nz$, let $\kappa^s \in \Nz$, $\SoS^s\in\lp^2$ and $\wEvS^s
        \in \lp^\infty$.  For  the  mixing measure $\FuVg{\mu}
	:= \frac{1}{N} \sum_{s \in \cS} \frac{1}{2^{\kappa^s}} \sum_{\tau
          \in \{\pm 1\}^{\kappa^s}}
        \FuVg[\nlImS]{\wEvS^s\SoS^{s,\tau}}$ with
        $\SoS^{s,\tau}=\Nsuite[j]{\tau_j\So[j]^{s}\Ind{j\in\nset{\kappa^s}}}$
        and $\FuVg{0}:= \FuVg[\nlImS]{\nSoS}$ the $\chi^2$-divergence satisfies
	\begin{equation}\label{eq: mixing measure chi-squared}
          \chi^2(\FuVg{\mu}, \FuVg{0}) \leq \frac{1}{N^2} \sum_{s,t \in \cS} \exp\lb \frac{1}{2} \qFPr[\kappa^s \wedge \kappa^t] {\wEvS^s \SoS^s \wEvS^t \SoS^t /\snlImS}\rb - 1.	\end{equation}
\end{lemma}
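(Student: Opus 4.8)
The plan is to compute the $\chi^2$-divergence through the classical identity for a Gaussian mixture against a centred Gaussian with the same covariance, and then to exploit the product structure of the averaging over the hypercube vertices together with the elementary inequality $\cosh(x)\le e^{x^2/2}$. I would first record that $\FuVg{0}=\FuVg[\nlImS]{\nSoS}$ and each mixture component $\FuVg[\nlImS]{\wEvS^s\SoS^{s,\tau}}$ is a product measure on $\IR^{\IN}$ with the same diagonal covariance, the $j$-th variance being $\snlIm[j]$; that each mean sequence $\wEvS^s\SoS^{s,\tau}$ is supported on $\nset{\kappa^s}$, so in particular $\wEvS^s\SoS^{s,\tau}\in\lp^2$ and the density of $\FuVg[\nlImS]{\wEvS^s\SoS^{s,\tau}}$ with respect to $\FuVg{0}$ depends on finitely many coordinates only; and that the shifted factors are mutually absolutely continuous with $\FuVg{0}$, because only finitely many one-dimensional factors are shifted. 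The computation therefore reduces to a finite-dimensional one.

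Next I would use that for a mean sequence $\theta$ with finitely many nonzero entries the Gaussian likelihood ratio equals $\frac{d\FuVg[\nlImS]{\theta}}{d\FuVg{0}}(z)=\exp\big(\sum_{j}\snlIm[j]^{-1}\theta_jz_j-\tfrac12\sum_{j}\snlIm[j]^{-1}\theta_j^2\big)$. Starting from $\chi^2(\FuVg{\mu},\FuVg{0})=\int(d\FuVg{\mu}/d\FuVg{0})\,d\FuVg{\mu}-1$, expanding the double mixture defining $\FuVg{\mu}$ and evaluating the resulting moment generating functions under $\FuVg{0}$ (the cross term of two Gaussian shifts producing the inner product with respect to the inverse covariance), one obtains
\begin{equation*}
  \chi^2(\FuVg{\mu},\FuVg{0})
  =\frac{1}{N^2}\sum_{s,t\in\cS}\frac{1}{2^{\kappa^s}2^{\kappa^t}}\sum_{\tau\in\{\pm1\}^{\kappa^s}}\sum_{\tau'\in\{\pm1\}^{\kappa^t}}\exp\Big(\sum_{j}\frac{(\wEvS^s\SoS^{s,\tau})_j\,(\wEvS^t\SoS^{t,\tau'})_j}{\snlIm[j]}\Big)-1 .
\end{equation*}
For a fixed pair $(s,t)$ the summand in the exponent vanishes whenever $j>\kappa^s\wedge\kappa^t$, since $\SoS^{s,\tau}$ is supported on $\nset{\kappa^s}$; hence the sign variables $\tau_j,\tau'_j$ with $j>\kappa^s\wedge\kappa^t$ do not enter the exponent and averaging over them contributes only a factor $1$. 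For $j\le\kappa^s\wedge\kappa^t$ I would put $c_j:=\snlIm[j]^{-1}\wEv[j]^s\So[j]^s\wEv[j]^t\So[j]^t$, so that the exponent becomes $\sum_{j\le\kappa^s\wedge\kappa^t}c_j\tau_j\tau'_j$; since both the exponential and the product averaging measure factorise over $j$ and $\tfrac14(e^{c_j}+e^{-c_j}+e^{-c_j}+e^{c_j})=\cosh(c_j)$, the inner double sum equals $\prod_{j\le\kappa^s\wedge\kappa^t}\cosh(c_j)$. Applying $\cosh(x)\le e^{x^2/2}$ coordinatewise bounds this product by $\exp\big(\tfrac12\sum_{j\le\kappa^s\wedge\kappa^t}c_j^2\big)=\exp\big(\tfrac12\qFPr[\kappa^s\wedge\kappa^t]{\wEvS^s\SoS^s\wEvS^t\SoS^t/\snlImS}\big)$, and summing over $s,t\in\cS$ gives the asserted inequality.

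The substantive content is the first identity together with one piece of bookkeeping: for a pair with $\kappa^s\ne\kappa^t$ one must keep track of which sign variables actually occur in the exponent --- exactly those indexed by $j\le\kappa^s\wedge\kappa^t$ --- after which the factorisation over $j$ and the $\cosh$-bound are routine. I do not expect either this, or the standard justification for passing to a finite-dimensional marginal, to pose a genuine obstacle.
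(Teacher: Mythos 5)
Your proof is correct and uses essentially the same argument as the paper: both reduce the $\chi^2$-divergence to the double sum over $(s,t)$ of a product of $\cosh$ factors over $j\le\kappa^s\wedge\kappa^t$ and then apply $\cosh(x)\le e^{x^2/2}$. The only difference is organizational — the paper first averages over the signs to express the likelihood ratio as a product of hyperbolic cosines and then squares it, whereas you expand $\chi^2$ as a double sum over $(s,\tau)$ and $(t,\tau')$, use the Gaussian cross-moment identity, and perform the sign-averaging last — but the key computations and the bookkeeping about which coordinates contribute are identical.
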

\begin{proof}[Proof of \cref{lemma:adaptive_lower_bound}]
  Inspecting the calculations in the direct Gaussian sequence space
  model with coordinate-wise constant noise levels by
  \cite{Baraud2002} (proof of Theorem 1) it is readily seen that for
  any $z_{\mbullet}=\Nsuite[j]{z_j}\in\Rz^\Nz$ the likelihood ratio is
  given by
  \begin{align*}
    \frac{\dif \FuVg{\mu}}{\dif \FuVg{0}}(z_{\mbullet}) =
    \frac{1}{N} \sum_{s \in \cS} \exp\lb
    -\tfrac{1}{2}\qFPr[\kappa^s]{\wEvS^s\SoS^s/\nlImS}
    \rb \prod_{j=1}^{\kappa^s} \frac{1}{2} \lb \exp\lb - \frac{\wEv[j]^s \So[j]^s z_j}{\snlIm[j]}\rb + \exp \lb\frac{\wEv[j]^s \So[j]^s z_j}{\snlIm[j]} \rb \rb.
  \end{align*}
  Keep in mind that for $\zObS\sim\FuVg[\nlImS]{\nSoS}$ we have
  $\FuEx[\nlImS]{\nSoS}\lb\exp(a Z_j)\rb = \exp(a^2 \snlIm[j]/2)$ for
  any $j\in\Nz$ and $a\in\Rz$.  By taking the expectation of the
  squared likelihood ratio with respect to $\FuVg{0}$ we obtain
  \begin{multline*}
    \FuEx{0}\lb	\frac{\dif \FuVg{\mu}}{\dif \FuVg{0}}(z_{\mbullet}) \rb^2  
    = \frac{1}{N^2} \sum_{s,t \in \cS}
    \prod_{j=1}^{\kappa^s \wedge \kappa^t} \frac{1}{2} \lb \exp\lb -
    \frac{\wEv[j]^s \So[j]^s \wEv[j]^t \So[j]^t}{\snlIm[j]}\rb
    + \exp \lb\frac{\wEv[j]^s \So[j]^s \wEv[j]^t \So[j]^t}{\snlIm[j]} \rb \rb \\
    = \frac{1}{N^2} \sum_{s,t \in \cS}  	\prod_{j=1}^{\kappa^s \wedge \kappa^t} \cosh\lb  \frac{\wEv[j]^s \So[j]^s \wEv[j]^t \So[j]^t}{\snlIm[j]}\rb .
  \end{multline*}
  Exploiting the elementary inequality $\cosh(x) \leq \exp(x^2/2)$,
  $x\in\Rz$, and the definition of the $\chi^2$-divergence,  we obtain
  \eqref{eq: mixing measure chi-squared}, which completes the proof.
\end{proof}


\bibliography{lit.bib}

\begin{thebibliography}{36}
\providecommand{\natexlab}[1]{#1}
\providecommand{\url}[1]{\texttt{#1}}
\expandafter\ifx\csname urlstyle\endcsname\relax
  \providecommand{\doi}[1]{doi: #1}\else
  \providecommand{\doi}{doi: \begingroup \urlstyle{rm}\Url}\fi

\bibitem[Baraud(2002)]{Baraud2002}
Y.~Baraud.
\newblock Non-asymptotic minimax rates of testing in signal detection.
\newblock \emph{Bernoulli}, 8\penalty0 (5):\penalty0 577--606, 2002.

\bibitem[Birg{\'e}(2001)]{Birge2001}
L.~Birg{\'e}.
\newblock An alternative point of view on {L}epski's method.
\newblock \emph{Institute of Mathematical Statistics Lecture Notes - Monograph
  Series}, 36:\penalty0 113--133, 2001.
\newblock ISSN 0749-2170.

\bibitem[Carpentier and Verzelen(2019)]{CarpentierVerzelen2019}
A.~Carpentier and N.~Verzelen.
\newblock Adaptive estimation of the sparsity in the {G}aussian vector model.
\newblock \emph{The Annals of Statistics}, 47\penalty0 (1):\penalty0 93--126,
  2019.

\bibitem[Cavalier(2008)]{Cavalier2008}
L.~Cavalier.
\newblock Nonparametric statistical inverse problems.
\newblock \emph{Inverse Problems}, 24:\penalty0 1--19, 2008.

\bibitem[Cavalier and Hengartner(2005)]{CavalierHengartner2005}
L.~Cavalier and N.~W. Hengartner.
\newblock Adaptive estimation for inverse problems with noisy operators.
\newblock \emph{Inverse Problems}, 21\penalty0 (4):\penalty0 1345, 2005.

\bibitem[Cavalier and Tsybakov(2002)]{CavalierTsybakov2002}
L.~Cavalier and A.~Tsybakov.
\newblock Sharp adaptation for inverse problems with random noise.
\newblock \emph{Probability Theory and Related Fields}, 123\penalty0
  (3):\penalty0 323--354, 2002.

\bibitem[Cavalier et~al.(2002)Cavalier, Golubev, Picard, and
  Tsybakov]{CavalierGolubevPicardTsybakov2002}
L.~Cavalier, G.~Golubev, D.~Picard, and A.~Tsybakov.
\newblock {Oracle inequalities for inverse problems.}
\newblock \emph{The Annals of Statistics}, 30\penalty0 (3):\penalty0 843--874,
  2002.

\bibitem[Cavalier et~al.(2003)Cavalier, Golubev, Lepski, and
  Tsybakov]{CavalierGolubevLepskiTsybakov2003}
L.~Cavalier, Y.~Golubev, O.~Lepski, and A.~Tsybakov.
\newblock {Block thresholding and sharp adaptive estimation in severely
  ill-posed inverse problems}.
\newblock \emph{Theory of Probability and its Applications}, 48\penalty0
  (3):\penalty0 426--446, 2003.

\bibitem[Collier et~al.(2017)Collier, Comminges, and
  Tsybakov]{CollierCommingesTsybakov2017}
O.~Collier, L.~Comminges, and A.~B. Tsybakov.
\newblock Minimax estimation of linear and quadratic functionals on sparsity
  classes.
\newblock \emph{The Annals of Statistics}, 45\penalty0 (3):\penalty0 923--958,
  2017.

\bibitem[Comte and Lacour(2011)]{ComteLacour2011}
F.~Comte and C.~Lacour.
\newblock Data-driven density estimation in the presence of additive noise with
  unknown distribution.
\newblock \emph{Journal of the Royal Statistical Society: Series B (Statistical
  Methodology)}, 73\penalty0 (4):\penalty0 601--627, 2011.

\bibitem[Efromovich(1997)]{Efromovich1997}
S.~Efromovich.
\newblock Density estimation for the case of supersmooth measurement error.
\newblock \emph{Journal of the American Statistical Association}, 92:\penalty0
  526--535, 1997.

\bibitem[Efromovich and Koltchinskii(2001)]{EfromovichKoltchinskii2001}
S.~Efromovich and V.~Koltchinskii.
\newblock On inverse problems with unknown operators.
\newblock \emph{IEEE Transactions on Information Theory}, 47\penalty0
  (7):\penalty0 2876--2894, 2001.

\bibitem[Ermakov(1990)]{Ermakov1990a}
M.~Ermakov.
\newblock {On optimal solutions of the deconvolution problem.}
\newblock \emph{{Inverse Problems}}, 6\penalty0 (5):\penalty0 863--872, 1990.

\bibitem[Fan(1991)]{Fan1991}
J.~Fan.
\newblock On the optimal rates of convergence for nonparametric deconvolution
  problems.
\newblock \emph{{The Annals of Statistics}}, 19\penalty0 (3):\penalty0
  1257--1272, 1991.

\bibitem[Hoffmann and {Rei\ss}(2008)]{HoffmannReiss2008}
M.~Hoffmann and M.~{Rei\ss}.
\newblock Nonlinear estimation for linear inverse problems with error in the
  operator.
\newblock \emph{The Annals of Statistics}, 36\penalty0 (1):\penalty0 310--336,
  2008.

\bibitem[Ingster(1993{\natexlab{a}})]{Ingster1993}
Y.~Ingster.
\newblock Asymptotically minimax hypothesis testing for nonparametric
  alternatives {I}.
\newblock \emph{Mathematical Methods of Statistics}, 2\penalty0 (2):\penalty0
  85--114, 1993{\natexlab{a}}.

\bibitem[Ingster(1993{\natexlab{b}})]{Ingster1993a}
Y.~Ingster.
\newblock Asymptotically minimax hypothesis testing for nonparametric
  alternatives {II}.
\newblock \emph{Mathematical Methods of Statistics}, 2\penalty0 (2):\penalty0
  171--189, 1993{\natexlab{b}}.

\bibitem[Ingster(1993{\natexlab{c}})]{Ingster1993b}
Y.~Ingster.
\newblock Asymptotically minimax hypothesis testing for nonparametric
  alternatives {III}.
\newblock \emph{Mathematical Methods of Statistics}, 2\penalty0 (2):\penalty0
  249--268, 1993{\natexlab{c}}.

\bibitem[Ingster and Suslina(2012)]{IngsterSuslina2012}
Y.~Ingster and I.~A. Suslina.
\newblock \emph{Nonparametric goodness-of-fit testing under Gaussian models},
  volume 169.
\newblock Springer Science \& Business Media, 2012.

\bibitem[Ingster et~al.(2012{\natexlab{a}})Ingster, Sapatinas, and
  Suslina]{IngsterSapatinasSuslina2012}
Y.~I. Ingster, T.~Sapatinas, and I.~A. Suslina.
\newblock Minimax signal detection in ill-posed inverse problems.
\newblock \emph{The Annals of Statistics}, 40\penalty0 (3):\penalty0
  1524--1549, 2012{\natexlab{a}}.

\bibitem[Ingster et~al.(2012{\natexlab{b}})Ingster, Sapatinas, and
  Suslina]{IngsterSapatinasSuslina2012a}
Y.~I. Ingster, T.~Sapatinas, and I.~A. Suslina.
\newblock Supplement to the article: Minimax signal detection in ill-posed
  inverse problems.
\newblock \emph{The Annals of Statistics}, 40\penalty0 (3):\penalty0
  1524--1549, 2012{\natexlab{b}}.

\bibitem[Johannes and Schwarz(2013)]{JohannesSchwarz2013}
J.~Johannes and M.~Schwarz.
\newblock Adaptive gaussian inverse regression with partially unknown operator.
\newblock \emph{Communications in Statistics-Theory and Methods}, 42\penalty0
  (7):\penalty0 1343--1362, 2013.

\bibitem[Johnstone and Silverman(1990)]{JohnstoneSilverman1990}
I.~M. Johnstone and B.~W. Silverman.
\newblock {Speed of estimation in positron emission tomography and related
  inverse problems.}
\newblock \emph{{The Annals of Statistics}}, 18\penalty0 (1):\penalty0
  251--280, 1990.

\bibitem[Kroll(2019)]{Kroll2019}
M.~Kroll.
\newblock Rate optimal estimation of quadratic functionals in inverse problems
  with partially unknown operator and application to testing problems.
\newblock \emph{ESAIM: Probability and Statistics}, 23:\penalty0 524--551,
  2019.

\bibitem[Laurent and Massart(2000)]{LaurentMassart2000}
B.~Laurent and P.~Massart.
\newblock Adaptive estimation of a quadratic functional by model selection.
\newblock \emph{The Annals of Statistics}, 28\penalty0 (5):\penalty0
  1302--1338, 2000.

\bibitem[Laurent et~al.(2011)Laurent, Loubes, and
  Marteau]{LaurentLoubesMarteau2011}
B.~Laurent, J.-M. Loubes, and C.~Marteau.
\newblock Testing inverse problems: a direct or an indirect problem?
\newblock \emph{Journal of Statistical Planning and Inference}, 141\penalty0
  (5):\penalty0 1849--1861, 2011.

\bibitem[Laurent et~al.(2012)Laurent, Loubes, and
  Marteau]{LaurentLoubesMarteau2012}
B.~Laurent, J.-M. Loubes, and C.~Marteau.
\newblock Non asymptotic minimax rates of testing in signal detection with
  heterogeneous variances.
\newblock \emph{Electronic Journal of Statistics}, 6:\penalty0 91--122, 2012.

\bibitem[Lepski and Spokoiny(1999)]{LepskiSpokoiny1999}
O.~V. Lepski and V.~G. Spokoiny.
\newblock Minimax nonparametric hypothesis testing: the case of an
  inhomogeneous alternative.
\newblock \emph{Bernoulli}, 5\penalty0 (2):\penalty0 333--358, 1999.

\bibitem[Mair and Ruymgaart(1996)]{MairRuymgaart1996}
B.~A. Mair and F.~H. Ruymgaart.
\newblock {Statistical inverse estimation in Hilbert scales.}
\newblock \emph{SIAM Journal on Applied Mathematics}, 56\penalty0 (5):\penalty0
  1424--1444, 1996.

\bibitem[Marteau and Sapatinas(2015)]{MarteauSapatinas2015}
C.~Marteau and T.~Sapatinas.
\newblock A unified treatment for non-asymptotic and asymptotic approaches to
  minimax signal detection.
\newblock \emph{Statistics Surveys}, 9:\penalty0 253--297, 2015.

\bibitem[Marteau and Sapatinas(2017)]{MarteauSapatinas2017}
C.~Marteau and T.~Sapatinas.
\newblock Minimax goodness-of-fit testing in ill-posed inverse problems with
  partially unknown operators.
\newblock \emph{Annales de l'Institut Henri Poincar{\'e}, Probabilit{\'e}s et
  Statistiques}, 53\penalty0 (4):\penalty0 1675--1718, 2017.

\bibitem[Math{\'e} and Pereverzev(2001)]{MathePereverzev2001}
P.~Math{\'e} and S.~V. Pereverzev.
\newblock {Optimal discretization of inverse problems in Hilbert scales.
  Regularization and self-regulari\-za\-tion of projection methods.}
\newblock \emph{SIAM Journal on Numerical Analysis}, 38\penalty0 (6):\penalty0
  1999--2021, 2001.

\bibitem[Neumann(1997)]{Neumann1997}
M.~H. Neumann.
\newblock On the effect of estimating the error density in nonparametric
  deconvolution.
\newblock \emph{Journal of Nonparametric Statistics}, 7\penalty0 (4):\penalty0
  307--330, 1997.

\bibitem[Spokoiny(1996)]{Spokoiny1996}
V.~Spokoiny.
\newblock Adaptive hypothesis testing using wavelets.
\newblock \emph{The Annals of Statistics}, 24\penalty0 (6):\penalty0
  2477--2498, 1996.

\bibitem[Stefanski and Carroll(1990)]{StefanskiCarroll1990}
L.~Stefanski and R.~J. Carroll.
\newblock {Deconvoluting kernel density estimators.}
\newblock \emph{Statistics}, 21:\penalty0 169--184, 1990.

\bibitem[Tsybakov(2009)]{Tsybakov2009}
A.~B. Tsybakov.
\newblock \emph{Introduction to Nonparametric Estimation}.
\newblock Springer New York, 2009.

\end{thebibliography}
\end{document}